\providecommand{\U}[1]{\protect \rule{.1in}{.1in}}
\newtheorem{theorem}{Theorem}[section]
\newtheorem{corollary}[theorem]{Corollary}
\newtheorem{definition}[theorem]{Definition}
\newtheorem{lemma}[theorem]{Lemma}
\newtheorem{proposition}[theorem]{Proposition}
\theoremstyle{remark}
\newtheorem{remark}[theorem]{Remark}
\numberwithin{equation}{section}
\begin{document}
\title[geometry of generalized Lam\'{e} equation, III]{The geometry of generalized Lam\'{e} equation, III:\\ One-to-one of the Riemann-Hilbert correspondence}
\author{Zhijie Chen}
\address{Department of Mathematical Sciences, Yau Mathematical Sciences Center,
Tsinghua University, Beijing, 100084, China }
\email{zjchen2016@tsinghua.edu.cn}
\author{Ting-Jung Kuo}
\address{Department of Mathematics, Taiwan Normal University, Taipei 11677, Taiwan }
\email{tjkuo1215@ntnu.edu.tw}
\author{Chang-Shou Lin}
\address{Center for Advanced Study in
Theoretical Sciences (CASTS), Taiwan University, Taipei 10617, Taiwan }
\email{cslin@math.ntu.edu.tw}

\begin{abstract}
In this paper, the third in a series, we continue to study the
generalized Lam\'{e} equation H$(n_0,n_1,n_2,n_3;B)$ with the Darboux-Treibich-Verdier potential
\begin{equation*}
y^{\prime \prime }(z)=\bigg[  \sum_{k=0}^{3}n_{k}(n_{k}+1)\wp(z+\tfrac{
\omega_{k}}{2}|\tau)+B\bigg]  y(z),\quad n_{k}\in \mathbb{Z}_{\geq0}
\end{equation*}
and a related linear ODE with additional singularities $\pm p$
from the monodromy aspect.
We establish the uniqueness of these ODEs with respect to the global monodromy data. Surprisingly, our result shows that the Riemann-Hilbert correspondence from the set \[\{\text{H}(n_0,n_1,n_2,n_3;B)|B\in\mathbb{C}\}\cup \{\text{H}(n_0+2,n_1,n_2,n_3;B) | B\in\mathbb{C}\}\] to the set of group representations $\rho:\pi_1(E_{\tau})\to SL(2,\mathbb{C})$ is one-to-one.
We emphasize that this result is not trivial at all. There is an example that for $\tau=\frac12+i\frac{\sqrt{3}}{2}$, there are $B_1,B_2$ such that the monodromy representations of H$(1,0,0,0;B_1)$ and H$(4,0,0,0;B_2)$ are {\bf the same}, namely the Riemann-Hilbert correspondence from the set \[\{\text{H}(n_0,n_1,n_2,n_3;B)|B\in\mathbb{C}\}\cup \{\text{H}(n_0+3,n_1,n_2,n_3;B) | B\in\mathbb{C}\}\] to the set of group representations is {\bf not} necessarily one-to-one. This example shows that our result is completely different from the classical one concerning linear ODEs defined on $\mathbb{CP}^1$ with finite singularities.
\end{abstract}

\maketitle

\section{Introduction}

Throughout the paper, we use the notations $\omega_{0}=0$, $\omega_{1}=1$,
$\omega_{2}=\tau$, $\omega_{3}=1+\tau$ and $\Lambda_{\tau}=\mathbb{Z+Z}\tau$,
where $\tau \in \mathbb{H}=\{  \tau|\operatorname{Im}\tau>0\}  $.
Define $E_{\tau}:=\mathbb{C}/\Lambda_{\tau}$ to be a flat torus and $E_{\tau}[2]:=\{ \frac{\omega_{k}}{2}|k=0,1,2,3\}+\Lambda
_{\tau}$ to be the set consisting of the lattice points and $2$-torsion points
in $E_{\tau}$. For $z\in\mathbb{C}$ we denote $[z]:=z \ (\text{mod}\ \Lambda_{\tau}) \in E_{\tau}$.
For a point $[z]$ in $E_{\tau}$ we often write $z$ instead of $[z]$ to
simplify notations when no confusion arises.

Let $\wp(z)=\wp(z|\tau)$ be the
Weierstrass elliptic function with periods $\Lambda_{\tau}$ and define $e_k(\tau):=\wp(\frac{\omega_k}{2}|\tau)$, $k=1,2,3$. Let $\zeta(z)=\zeta(z|\tau):=-\int^{z}\wp(\xi|\tau)d\xi$
be the Weierstrass zeta function with two quasi-periods $\eta_{k}(\tau)$, $%
k=1,2$:
\begin{equation}
\eta_{k}(\tau):=2\zeta(\tfrac{\omega_{k}}{2} |\tau)=\zeta(z+\omega_{k} |\tau)-\zeta(z|\tau),\quad k=1,2,
\label{40-2}
\end{equation}
and $\sigma(z)=\sigma(z|\tau):=\exp \int^{z}\zeta(\xi)d\xi$ be the Weierstrass sigma function. Notice that $\zeta(z)$ is an odd
meromorphic function with simple poles at $\Lambda_{\tau}$ and $\sigma(z)$
is an odd entire function with simple zeros at $\Lambda_{\tau}$.

This is the third in a series of papers, initiated in Part I \cite{CKL1}, to study the generalized Lam\'{e} equation
(denoted by GLE$(\mathbf{n},p,A,\tau)$):
\begin{equation}
y^{\prime \prime}(z)=I_{\mathbf{n}}(z;p,A,\tau)y(z),\quad z\in\mathbb{C},
\label{54-12}%
\end{equation}
where the potential $I_{\mathbf{n}}(z;p,A,\tau)$ is given by%
\begin{equation}
I_{\mathbf{n}}(z;p,A,\tau) =\left[
\begin{array}
[c]{l}%
\sum_{k=0}^{3}n_{k}(n_{k}+1)\wp(z+\frac{\omega_{k}}{2}|\tau)+\frac{3}{4}%
(\wp(z+p|\tau)+\\
\wp(z-p|\tau))+A(\zeta(z+p|\tau)-\zeta(z-p|\tau))+B
\end{array}
\right]  \label{potential1}%
\end{equation}
with $\mathbf{n}=(n_0, n_1, n_2, n_3)$, $n_{k}\in \mathbb{Z}_{\geq 0}$ for all $k$, $\pm [p]  \not \in E_{\tau}[2]$ and%
\begin{equation}
B=A^{2}-\zeta(2p|\tau)A-\frac{3}{4}\wp(2p|\tau)-\sum_{k=0}^{3}n_{k}(n_{k}+1)\wp(
p+\tfrac{\omega_{k}}{2}|\tau)  . \label{apparent-B}%
\end{equation}
The (\ref{apparent-B}) is
equivalent to that $\pm[p]$ are apparent singularities (i.e.
non-logarithmic); see \cite{Chen-Kuo-Lin} for a proof and also \cite{CKL-JGP,CKLW,Takemura} for recent studies on (\ref{54-12}).
Remark that all singularities of GLE$(\mathbf{n},p,A,\tau)$ are apparent and
\begin{align}
&  \text{GLE}(\mathbf{n},p,A,\tau)\; \text{\textit{is independent of any
representative }}\tilde{p}\in p+\Lambda_{\tau}\label{GLE-invariant}\\
&  \text{\textit{and} GLE}(\mathbf{n},p,A,\tau)=\text{GLE}(\mathbf{n}%
,-p,-A,\tau).\nonumber
\end{align}
For convenience, we often omit some of $\{\mathbf{n},p,A,\tau\}$ in the notations
when no confusion should arise.

Our motivation of studying GLE (\ref{54-12}) is inspired by the so-called \emph{elliptic form} of Painlev\'{e} VI equation (denoted by EPVI$(\alpha_{0},\alpha_{1},\alpha
_{2},\alpha_{3})$):
\begin{equation}
\frac{d^{2}p(\tau)}{d\tau^{2}}=\frac{-1}{4\pi^{2}}\sum_{k=0}^{3}\alpha_{k}%
\wp^{\prime}\left(  \left.  p(\tau)+\frac{\omega_{k}}{2}\right \vert
\tau \right)  , \label{124}%
\end{equation}
where
\begin{equation}
\alpha_{k}=\tfrac{(2n_k+1)^2}{8},\quad n_k\in\mathbb{Z}_{\geq 0},\quad k=0,1,2,3.
\label{125}%
\end{equation}
In \cite{Chen-Kuo-Lin} we proved that GLE (\ref{54-12}) with $(p, A)=(p(\tau),A(\tau))$ preserves the monodromy as $\tau$ deforms if and only if $(p(\tau),A(\tau))$ satisfies the following Hamiltonian system
\begin{equation}
\left \{
\begin{array}
[c]{l}%
\frac{dp(\tau)}{d\tau}=\frac{\partial \mathcal{H}}{\partial A}=\frac{-i}{4\pi
}(2A-\zeta(2p|\tau)+2p\eta_{1}(\tau))\\
\frac{dA(\tau)}{d\tau}=-\frac{\partial \mathcal{H}}{\partial p}=\frac{i}{4\pi
}\left(
\begin{array}
[c]{l}%
(2\wp(2p|\tau)+2\eta_{1}(\tau))A-\frac{3}{2}\wp^{\prime}(2p|\tau)\\
-\sum_{k=0}^{3}n_{k}(n_{k}+1)\wp^{\prime}(p+\frac{\omega_{k}}{2}|\tau)
\end{array}
\right)
\end{array}
\right.  , \label{142-0}%
\end{equation}
with
\begin{align*}
\mathcal{H}  &  =\frac{-i}{4\pi}\left[
\begin{array}
[c]{l}%
A^{2}+(2p\eta_{1}(\tau)-\zeta(2p|\tau))A-\frac{3}{4}\wp(2p|\tau)\\
-\sum_{k=0}^{3}n_{k}(n_{k}+1)\wp(p+\frac{\omega_{k}}{2}|\tau)
\end{array}
\right]\\
&  =\frac{-i}{4\pi}(B+2p\eta_{1}(\tau)A),
\end{align*}
or equivalently $p(\tau)$ is a solution of EPVI$(\alpha_{0},\alpha_{1},\alpha
_{2},\alpha_{3})$.

Since the local exponents of GLE (\ref{54-12}) at $\frac{\omega_{k}}{2}$ (resp. at $\pm p$) are $-n_{k}$, $n_{k}+1$ (resp. $-\frac12$, $\frac32$), the local monodromy matrix at $\frac{\omega_k}{2}$ (resp. at $\pm p$) is the identity matrix $I_2$ (resp. is $-I_2$). Denote by $L$ the straight segment connecting $\pm p$. Then any solution $y(z)$ of GLE (\ref{54-12}) can be viewed as a single-valued
meromorphic function in $\mathbb{C}\backslash(L+\Lambda_{\tau})$, and in this
region $y(-z)$ and $y(z+\omega_j)$ are well-defined.
See
\cite{Chen-Kuo-Lin,Takemura} or Section 2. Let $(y_1, y_2)$ be any linearly independent solutions of  GLE (\ref{54-12}). Then there are monodromy matrices $N_1, N_2\in SL(2,\mathbb{C})$ such that
\begin{equation}\label{eqee}
\begin{pmatrix}
y_{1}(z+\omega_j)\\
y_{2}(z+\omega_j)
\end{pmatrix}
=N_j
\begin{pmatrix}
y_{1}(z)\\
y_{2}(z)
\end{pmatrix}
,\quad j=1,2,\quad\text{and }
\end{equation}
\begin{equation}
\label{eq-Scommute}
N_1N_2=N_2N_1.
\end{equation}
Furthermore, the monodromy group of GLE (\ref{54-12}) is generated by $-I_2, N_1, N_2$.
By (\ref{eq-Scommute}), clearly there are two cases (see Part I \cite{CKL1}):

\begin{itemize}
\item[Case (a)] Completely reducible (i.e. all the monodromy matrices have two
linearly independent common eigenfunctions). Up to a common conjugation,
$N_1$ and $N_2$ can be expressed as%
\begin{equation}
N_1=%
\begin{pmatrix}
e^{-2\pi is} & 0\\
0 & e^{2\pi is}%
\end{pmatrix}
,\text{ \  \  \ }N_2=%
\begin{pmatrix}
e^{2\pi ir} & 0\\
0 & e^{-2\pi ir}%
\end{pmatrix}
\label{Mono-1}%
\end{equation}
for some $(r,s)\in \mathbb{C}^{2}\backslash \frac{1}{2}\mathbb{Z}^{2}$. In
particular,
\begin{equation}
(\text{tr}N_1,\text{tr}N_2)=(2\cos2\pi s,2\cos2\pi
r)\not \in \{ \pm(2,2),\pm(2,-2)\}. \label{complete-rs}%
\end{equation}

\item[Case (b)] Not completely reducible (i.e. the space of common eigenfunctions
is of dimension $1$). Up to a common conjugation, $N_1$ and
$N_2$ can be expressed as%
\begin{equation}
N_1=\varepsilon_{1}%
\begin{pmatrix}
1 & 0\\
1 & 1
\end{pmatrix}
,\text{ \  \  \ }N_2=\varepsilon_{2}%
\begin{pmatrix}
1 & 0\\
\mathcal{C} & 1
\end{pmatrix}
, \label{Mono-21}%
\end{equation}
where $\varepsilon_{1},\varepsilon_{2}\in \{ \pm1\}$ and $\mathcal{C}%
\in \mathbb{C}\cup \{ \infty \}$. In particular,
\begin{equation}
(\text{tr}N_1,\text{tr}N_2)=(2\varepsilon_{1}%
,2\varepsilon_{2})\in \{ \pm(2,2),\pm(2,-2)\}. \label{notcompleteC}%
\end{equation}
Remark that if $\mathcal{C}=\infty$, then (\ref{Mono-21}) should be understood
as%
\begin{equation}
N_1=\varepsilon_{1}%
\begin{pmatrix}
1 & 0\\
0 & 1
\end{pmatrix}
,\text{ \  \  \ }N_2=\varepsilon_{2}%
\begin{pmatrix}
1 & 0\\
1 & 1
\end{pmatrix}
. \label{Mono-31}%
\end{equation}

\end{itemize}

\noindent For later usage we will briefly review it in
Section \ref{Monodromy}.
In this paper, GLE  (\ref{54-12}) (and also the H$(\mathbf{n},B,\tau)$ below) is called \emph{completely reducible} if Case (a) occurs; \emph{not completely reducible} if Case (b) occurs.

In \cite{Chen-Kuo-Lin} we proved that if $p(\tau)$ is a solution of EPVI$(\alpha_{0},\alpha_{1},\alpha
_{2},\alpha_{3})$ and
\[p(\tau)\to \tfrac{\omega_k}{2}=\tfrac{\omega_k(\tau_0)}{2},\quad\text{as  }\tau\to \tau_0,\]
then  the potential $I_{\mathbf{n}}(z;p(\tau),A(\tau),\tau)$ converges to the well-known \emph{Darboux-Treibich-Verdier potential } $I_{\mathbf{n}_k^{\pm}}(z;B,\tau_0)$ for some $B\in\mathbb{C}$, where the Darboux-Treibich-Verdier potential is defined as (\cite{Darboux,Treibich,TV})
\begin{equation}
I_{\mathbf{n}}(z;B,\tau):=\sum_{k=0}^3n_k(n_k+1)\wp(z+\tfrac{\omega_k}{2}|\tau)+B,
\end{equation}
and $\mathbf{n}^\pm_k$ is defined by replacing $n_k$ in
$\mathbf{n}$ with $n_k\pm 1$.
That is, by considering the corresponding generalized Lam\'{e} equation (denoted by H$(\mathbf{n},B,\tau)$ or simply H$(\mathbf{n},B)$)
\begin{equation}  \label{eq21}
y^{\prime \prime }(z)=I_{\mathbf{n}}(z;B,\tau)y(z),\quad z\in\mathbb{C},
\end{equation}
we have that GLE$(\mathbf{n},p(\tau),A(\tau),\tau)$ converges to H$(\mathbf{n}_k,B,\tau_0)$.

For H$(\mathbf{n},B,\tau)$ we always assume $\max_k n_k\geq 1$.
H$(\mathbf{n},B,\tau)$ is the elliptic form of the well-known Heun's equation and the Darboux-Treibich-Verdier potential is known
as an elliptic algebro-geometric solution of the
KdV hierarchy \cite{GW1,Treibich,TV}. See also a
series of papers \cite{Takemura1,Takemura2,Takemura3,Takemura4,Takemura5} by Takemura,
where H$(\mathbf{n},B,\tau)$ was studied as the eigenvalue problem for the
Hamiltonian of the $BC_{1}$ (one particle) Inozemtsev model.
When $\mathbf{n}=(n,0,0,0)$, the potential $n(n+1)\wp(z|\tau)$ is the well-known
Lam\'{e} potential and (\ref{eq21}) becomes the Lam\'{e} equation
\begin{equation}  \label{Lame}
y^{\prime \prime }(z)=[n(n+1)\wp(z|\tau)+B]y(z),\quad z\in\mathbb{C}.
\end{equation}
Ince \cite{Ince} first discovered that the Lam\'{e} potential is a
finite-gap potential. See also the classic texts \cite{Halphen,Poole,Whittaker-Watson} and recent works \cite{CLW,Dahmen,LW2,Maier} for more details about (\ref{Lame}).

Like GLE$(\mathbf{n},p,A,\tau)$, the local monodromy matrix of H$(\mathbf{n}, B, \tau)$
at $\frac{\omega_{k}}{2}$ is also $I_{2}$. Thus the monodromy representation $\rho:\pi_{1}(E_{\tau})  \to
SL(2,\mathbb{C})$ is abelian, i.e. the same Cases (a) or (b) occurs.

The main purpose of this paper is to study the natural problem: {\it Whether  H$(\mathbf{n}, B)$ or GLE$(\mathbf{n},p,A,\tau)$ is unique with respect to the monodromy representation}, or equivalently, {\it whether the Riemann-Hilbert correspondence from the set $\{\text{H}(\mathbf{n},B)|B\in\mathbb{C}\}$ or $\{\text{GLE}(\mathbf{n},p,A,\tau)|p\notin E_{\tau}[2], A\in\mathbb{C}\}$ to the set of group representations $\rho:\pi_1(E_{\tau})\to SL(2,\mathbb{C})$ is one-to-one (i.e. injective)}?

\begin{remark}
By letting
$x=\wp(z)$, H$(\mathbf{n},B)$ can be projected to the Heun's equation on
$\mathbb{CP}^{1}$, for which the monodromy representation is {\it irreducible} if and only if Case (a) occurs,
and {\it reducible} if and only if Case (b) occurs. In other words, the monodromy of H$(\mathbf{n},B)$ is easier to compute than that of the Heun's equation on
$\mathbb{CP}^{1}$. This is an advantage of studying H$(\mathbf{n},B)$.
Most of the references in the
literature are devoted to irreducible representation on
$\mathbb{CP}^{1}$, but very few are devoted to reducible representation. In this paper we deal with the both two cases for H$(\mathbf{n}, B)$.
\end{remark}

For the completely reducible case (a), the one-to-one of the Riemann-Hilbert correspondence was proved in \cite[Theorem 3.3]{LW2} for the Lam\'{e} case and later in
Part II \cite[Lemma 2.3]{CKL2} for the Darboux-Treibich-Verdier case (See also \cite{CKL2,LW2} for important applications of such results). However, the proofs in \cite{CKL2,LW2} can \emph{not} work for the not completely reducible case (b). In this paper, we develop a new approach, which applies the deep relation with Painlev\'{e} VI equation and seems more sophisticated but works for the not completely reducible case and also GLE$(\mathbf{n},p,A,\tau)$.

Remark that although the monodromy matrices $N_j$'s depend on the choice
of linearly independent solutions, they are unique up to a common
conjugation. In particular, tr$N_j$ is \emph{independent} of the
choice of solutions, i.e. tr$N_j$ is
uniquely determined by GLE$(\mathbf{n},p,A)$ or H$(\mathbf{n},B)$.
We say%
\begin{equation}
(r_{1},s_{1})\sim(r_{2},s_{2})\text{ if }(r_{1},s_{1})\equiv \pm(r_{2}%
,s_{2})\operatorname{mod}\mathbb{Z}^{2}. \label{rs-equivalent}%
\end{equation}
Then in Case (a), $(r,s)$ is uniquely determined in $(\mathbb{C}^{2}%
\backslash \frac{1}{2}\mathbb{Z}^{2})/\sim$.

\begin{definition}
\label{MD} Given GLE$(\mathbf{n},p,A,\tau)$ (resp. H$(\mathbf{n},B,\tau)$), we call%
\[
\left \{
\begin{array}
[c]{l}%
(r,s)\in(\mathbb{C}^{2}\backslash \frac{1}{2}\mathbb{Z}^{2})/\sim \text{ \ if the
monodromy is completely reducible}\\
(\text{tr}N_1,\text{tr}N_2,\mathcal{C})\text{ if the
monodromy is not completely reducible}%
\end{array}
\right.
\]
to be its global monodromy data.
\end{definition}

The main purpose of this paper is to establish the uniqueness of such ODEs with respect to the global monodromy data. For $k\in \{0,1,2,3\}$ and $\mathbf{n}=(n_0,n_1,n_2,n_3)$, we define $\mathbf{n}_k$ by replacing $n_k$ in $\mathbf{n}$ with $n_k+2$, i.e. \begin{equation}\label{n-0-0}\mathbf{n}_0=(n_0+2,n_1,n_2,n_3),\quad
\mathbf{n}_1=(n_0,n_1+2,n_2,n_3)\end{equation}
and so on. The main result of this paper is the following uniqueness theorem.

\begin{theorem}
\label{thm1}Fix any $\mathbf{n}$ and $\tau$. Then the following hold.
\begin{itemize}
\item[(1)] If GLE$(\mathbf{n},p_{1},A_{1})$
and GLE$(\mathbf{n},p_{2},A_{2})$ have the same global monodromy data, then
$
\text{GLE}(\mathbf{n},p_{1},A_{1})=\text{GLE}(\mathbf{n},p_{2},A_{2}).
$
\item[(2)] If H$(\mathbf{n},B_1)$ and H$(\mathbf{n},B_2)$ have the same global monodromy data, then $\text{H}(\mathbf{n},B_1)=\text{H}(\mathbf{n},B_2)$.
\item[(3)] Fix any $k\in\{0,1,2,3\}$. Then the global monodromy datas of H$(\mathbf{n}, B_1, \tau)$ and H$(\mathbf{n}_k, B_2, \tau)$ can not be the same for any $B_1, B_2\in\mathbb{C}$.
\end{itemize}
\end{theorem}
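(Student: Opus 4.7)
The plan is to convert each equation in the statement into an isomonodromic Painlev\'{e} VI family and then invoke a strengthened Riemann--Hilbert uniqueness for Painlev\'{e} VI. For $\text{GLE}(\mathbf{n},p,A,\tau)$, the isomonodromic family is produced directly by taking $(p,A)$ as initial data of the Hamiltonian system (\ref{142-0}); for $\text{H}(\mathbf{n},B,\tau_{0})$, which has no free $(p,A)$, I would instead realize it as the $\tau\to\tau_{0}$ degeneration of an isomonodromic family $\text{GLE}(\mathbf{n}^\mp_{k},p(\tau),A(\tau),\tau)$ with $p(\tau)\to \omega_{k}/2$, using the $\pm$-branch degeneration $\text{GLE}(\mathbf{n}^\mp_k)\leadsto \text{H}(\mathbf{n})$ recalled after (\ref{eq21}). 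The crucial input throughout is that the global monodromy data in the sense of Definition \ref{MD} (including the coordinate $\mathcal{C}$ in Case (b)) pins down the Painlev\'{e} VI solution uniquely.

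For (1), I would feed each $(p_{i},A_{i})$ into (\ref{142-0}) to produce EPVI solutions $p_{i}(\tau)$, $A_{i}(\tau)$ preserving the monodromy near $\tau_{0}$. Since both carry the same monodromy data, Painlev\'{e} VI uniqueness forces $(p_{1}(\tau),A_{1}(\tau))\equiv(p_{2}(\tau),A_{2}(\tau))$, and evaluation at $\tau_{0}$ finishes. For (2), I would pick any index $k$ with $n_{k}\geq 1$ (possible since $\max_{k}n_{k}\geq 1$) and realize each $\text{H}(\mathbf{n},B_{i},\tau_{0})$ as the $\tau\to\tau_{0}$ limit of an isomonodromic family $\text{GLE}(\mathbf{n}^-_{k},p_{i}(\tau),A_{i}(\tau),\tau)$ along the $+$ branch, since $(\mathbf{n}^-_{k})^+_{k}=\mathbf{n}$ recovers $\text{H}(\mathbf{n},B_{i})$. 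Equal monodromy again yields coincidence of the two isomonodromic families by Painlev\'{e} VI uniqueness, and $\tau\to\tau_{0}$ then gives $B_{1}=B_{2}$.

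Part (3) is the novel and most delicate statement. Assume for contradiction that $\text{H}(\mathbf{n},B_{1},\tau_{0})$ and $\text{H}(\mathbf{n}_{k},B_{2},\tau_{0})$ share the same monodromy data. The key observation is that both may be realized as $\tau\to\tau_{0}$ degenerations of one and the same family $\text{GLE}(\mathbf{n}^+_{k},\cdot,\cdot,\tau)$ with $p(\tau)\to \omega_{k}/2$, but along two different branches: $\text{H}(\mathbf{n},B_{1})$ corresponds to $(\mathbf{n}^+_{k})^-_{k}=\mathbf{n}$, while $\text{H}(\mathbf{n}_{k},B_{2})$ corresponds to $(\mathbf{n}^+_{k})^+_{k}=\mathbf{n}_{k}$. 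These produce two EPVI solutions with the same $\alpha$-parameters, both satisfying $p_{i}(\tau_{0})=\omega_{k}/2$ but with qualitatively distinct asymptotics near $\tau_{0}$ (detectable at subleading order). Equal monodromy then forces $p_{1}\equiv p_{2}$ by Painlev\'{e} VI uniqueness, contradicting the mismatch of local asymptotics.

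The chief obstacle is Painlev\'{e} VI Riemann--Hilbert uniqueness on the reducible stratum (Case (b)): the classical statement is for irreducible monodromy, whereas in the reducible case the Riccati family introduces an extra modulus that must be identified precisely with the coordinate $\mathcal{C}$ of Definition \ref{MD}. A second technical point, specific to Part (3), is showing that the $+$ and $-$ degeneration branches of $\text{GLE}(\mathbf{n}^+_{k})$ at $p=\omega_{k}/2$ genuinely produce \emph{different} EPVI solutions; this will require explicit local asymptotics of $p(\tau)-\omega_{k}/2$ along each branch and verification that the two branches cannot coalesce.
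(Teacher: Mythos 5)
Your overall strategy coincides with the paper's: part (1) is proved by feeding $(p_i,A_i)$ into the Hamiltonian system (\ref{142-0}) and invoking the classification of EPVI solutions by their global monodromy data (Theorems \ref{thm-II-8} and \ref{thm-II-8-0}, which do include the modulus $\mathcal{C}$ on the reducible stratum --- precisely the obstacle you flag); parts (2) and (3) are proved by realizing H$(\mathbf{n},B)$ as the $\tau\to\tau_0$ degeneration of a GLE family with $p(\tau)\to\omega_k/2$ and, for (3), by exhibiting the two equations as limits of the \emph{same} family along the two branches. (The paper uses GLE$(\mathbf{n}^{+})$ degenerating downward rather than your GLE$(\mathbf{n}^{-}_k)$ degenerating upward, which spares the need to choose $k$ with $n_k\geq1$; and the non-coalescence of the two branches in (3) is seen already at \emph{leading} order, since the coefficients in (\ref{515-5}) satisfy $c_1^2=-c_2^2$, not merely at subleading order as you suggest.)

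The one genuine gap is that you treat the passage of the global monodromy data through the degeneration $\mathrm{GLE}(\mathbf{n}^{\pm},p(\tau),A(\tau),\tau)\to \mathrm{H}(\mathbf{n},B,\tau_0)$ as automatic (``equal monodromy again yields coincidence of the two isomonodromic families''), whereas this is the key technical claim (\ref{globalmono}) of the paper and does not follow from isomonodromy alone: the family is isomonodromic only for $\tau\neq\tau_0$, and at $\tau_0$ the two apparent singularities $\pm p(\tau)$ merge into $\omega_k/2$, so the underlying fundamental group and hence the meaning of the representation change in the limit. One must prove that the data of Definition \ref{MD} survives this limit. In the completely reducible case this follows from convergence of the Hermite--Halphen zeros $\boldsymbol{a}(\tau)$ and the explicit form of the eigenvalues; in the not completely reducible case one must additionally show that the modulus $\mathcal{C}$ is preserved, which the paper does by writing $\mathcal{C}=\chi_2/\chi_1$ as a ratio of periods of $\Phi_e^{-1}$ (proof of Theorem \ref{thm0-1}) and controlling this ratio as the Wronskian degenerates. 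Without this step the hypothesis that H$(\mathbf{n},B_1)$ and H$(\mathbf{n},B_2)$ share their data cannot be transferred to the approximating GLE families, and parts (2) and (3) do not close.
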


\begin{remark}\label{rm1.4} H$(\mathbf{n}, B_1, \tau)$ and H$(\mathbf{n}_k, B_2, \tau)$ have {\it different local exponents} at the singularity $\frac{\omega_k}{2}$. Therefore, it is quite surprising to us that for fixed $\mathbf{n}$, $\tau$ and $k$, {\it the Riemann-Hilbert correspondence from the set $\{\text{H}(\mathbf{n},B,\tau) | B\in\mathbb{C}\}\cup \{\text{H}(\mathbf{n}_k,B,\tau) | B\in\mathbb{C}\}$ to the set of group representations $\rho:\pi_1(E_{\tau})\to SL(2,\mathbb{C})$ is one-to-one}. We emphasize that this result is not trivial at all. For example, we can not expect the one-to-one correspondence from $\{\text{H}(\mathbf{n},B,\tau) | B\in\mathbb{C}\}\cup \{\text{H}((n_0+3,n_1,n_2,n_3),B,\tau) | B\in\mathbb{C}\}$ to the set of group representations. Indeed, Wang and the third author \cite[Theorem 4.5]{LW2} proved the existence of a pre-modular form $Z_{r,s}^{(n)}(\tau)$ such that the global monodromy data of H$((n,0,0,0),B,\tau)$ for some $B$ is given by $(r,s)\notin\frac12\mathbb{Z}^2$ if and only if $Z_{r,s}^{(n)}(\tau)=0$. Now for $\tau_0=\frac12+i\frac{\sqrt{3}}{2}$,
it was proved in \cite[Example 2.6]{LW} that
\[Z_{\frac13,\frac13}^{(1)}(\tau_0)=0,\quad \wp(\tfrac{1+\tau_0}{3}|\tau_0)=0.\]
Inserting these and $g_2(\tau_0)=0$ into the expression of $Z_{r,s}^{(4)}(\tau)$ (see \cite[(5.8)]{LW2}), we obtain $Z_{\frac13,\frac13}^{(4)}(\tau_0)=Z_{\frac13,\frac13}^{(1)}(\tau_0)=0$, so there are $B_1,B_2$ such that the global monodromy datas of H$((1,0,0,0)$, $B_1,\tau_0)$ and H$((4,0,0,0),B_2,\tau_0)$ are both $(\frac13,\frac13)$.

\end{remark}

\begin{remark}
The uniqueness with respect to the same monodromy group does not necessarily
hold. For example, our later argument shows that given $\mathbf{n}$ and
$m\in \mathbb{N}_{\geq3}$, there exist $(p_{j},A_{j})$, $j=1,2$ and the same
$\tau$ such that for GLE$(\mathbf{n},p_{1},A_{1})$,%
\[
N_1=%
\begin{pmatrix}
e^{-2\pi i/m} & 0\\
0 & e^{2\pi i/m}%
\end{pmatrix}
,\text{ }N_2=%
\begin{pmatrix}
e^{2\pi i/m} & 0\\
0 & e^{-2\pi i/m}%
\end{pmatrix},
\]
i.e. $($tr$N_1, $tr$N_2)=(2\cos \frac{2\pi}%
{m},2\cos \frac{2\pi}{m})$, and for GLE$(\mathbf{n},p_{2},A_{2})$,%
\[
\tilde{N}_1=%
\begin{pmatrix}
e^{-2\pi i/m} & 0\\
0 & e^{2\pi i/m}%
\end{pmatrix}
,\text{ }\tilde{N}_2=%
\begin{pmatrix}
e^{4\pi i/m} & 0\\
0 & e^{-4\pi i/m}%
\end{pmatrix},
\]
i.e. $($tr$\tilde{N}_1, $tr$\tilde{N}_2)=(2\cos \frac{2\pi}%
{m},2\cos \frac{4\pi}{m})$. Thus, these two GLEs have different global monodromy datas (or equivalently, different monodromy representations). However,
they have the same monodromy group (i.e. the images of the monodromy representations are the same)
\[
\left \langle -I_{2},N_1,N_2\right \rangle
=\left \langle -I_{2},\tilde{N}_1,\tilde{N}_2\right \rangle
=\left \langle -I_{2},N_1\right \rangle .
\]
\end{remark}

\begin{remark}\label{rm1.6}
For a class of linear ODEs defined on $\mathbb{CP}^1$ with finite singularities, classically there is a one-to-one correspondence of such ODEs and their monodromy datas; see e.g. \cite[Proposition 2.2]{FIK}. However, the set of monodromy datas for this classical result contains connection matrices at each singularities. Hence, our Theorem \ref{thm1} is different from the classical one because no apriori information about the connection matrices are assumed in Theorem \ref{thm1}. Remark that due to the inclusion of connection matrices, the class of ODEs in the classical result has no restrictions. However, Theorem \ref{thm1} has a strict restriction on the class of ODEs. For example, as mentioned in Remark \ref{rm1.4}, the one-to-one correspondence fails if the class of ODEs contains H$((1,0,0,0)$, $B_1,\tau_0)$ and H$((4,0,0,0),B_2,\tau_0)$.
\end{remark}

The rest of the paper is organized as follows. In Section \ref{Monodromy}, we briefly review the monodromy theory of GLE$(\mathbf{n},A,p)$. Our proof of Theorem \ref{thm1} relies on the connection between GLE$(\mathbf{n},A,p)$ and Painlev\'{e} VI equation established in \cite{Chen-Kuo-Lin}, which is briefly reviewed in Section \ref{GLE-PVI}. In Sections \ref{Hitchin-case}-\ref{General-case-Ok}, we establish the uniqueness of solutions of certain Painlev\'{e} VI equations with respect to the global monodromy datas of GLE$(\mathbf{n},A,p)$. This theory will be applied to prove Theorem \ref{thm1} in Section \ref{unique-u}. An application of Theorem \ref{thm1} will be given in Section \ref{sec-app}. One can see that our proof of Theorem \ref{thm1} is purely analytic. Recently Prof. Treibich communicated with us and he conjectured that there should be a different proof of Theorem \ref{thm1} via algebraic geometry. This is a very interesting question and deserves further study elsewhere.

\section{Preliminaries}

\label{Monodromy}

In this section, we briefly review the basic theory about the monodromy
representation of GLE$(\mathbf{n},A,p)$ and H$(\mathbf{n}, B)$ from \cite{CKL1,Takemura}, which will be applied in the proof of Theorem \ref{thm1}.

\subsection{The unique even elliptic solution}
Let $y_{1},y_{2}$ be any two solutions of GLE$(\mathbf{n},A,p)$ and set
$\Phi(z)=y_{1}(z)y_{2}(z).$ Then $\Phi(z)$ satisfies the second
symmetric product equation for GLE$(\mathbf{n},A,p)$:%
\begin{equation}
\Phi^{\prime \prime \prime}(z)-4I(z)\Phi^{\prime}(z)-2I^{\prime}(z)\Phi(z)=0,
\label{303-1}%
\end{equation}
where $I(z)=I_{\mathbf{n}}(z; p, A, \tau)$. The following lemma follows from \cite[Propositions 2.1 and 2.9]{Takemura}. For later usage, we sketch
the proof of the existence here, and refer the proof of the uniqueness to
\cite[Proposition 2.9]{Takemura} or Part I \cite[Proposition 2.3]{CKL1}.

\begin{lemma}
\label{lem6.3} \cite{Takemura} Equation (\ref{303-1}) has a unique (up to
multiplying a nonzero constant) even elliptic solution $\Phi_{e}(z)$.
\end{lemma}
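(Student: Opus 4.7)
My plan is to establish existence by constructing $\Phi_e$ as a suitable product of GLE solutions, and to establish uniqueness by a fixed-point count on the symmetric-square of the monodromy action.

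I would first observe that $I(z)=I_{\mathbf n}(z;p,A,\tau)$ is both even and elliptic in $z$: the Darboux--Treibich--Verdier sum is visibly even elliptic, and $\zeta(z+p)-\zeta(z-p)$ is even in $z$ with its quasi-period shifts cancelling. As a consequence, the $3$-dimensional solution space $V$ of (\ref{303-1}) carries commuting translations $T_1,T_2$ (coming from $z\mapsto z+\omega_1,\omega_2$) together with an involution $\iota:\Phi(z)\mapsto\Phi(-z)$ satisfying $\iota T_j=T_j^{-1}\iota$. Since (\ref{303-1}) is the second symmetric product of GLE$(\mathbf{n},p,A,\tau)$, one has $V=\operatorname{Sym}^2\langle y_1,y_2\rangle$ for any basis of GLE solutions and $T_j$ acts on $V$ as $\operatorname{Sym}^2(N_j)$. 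An even elliptic solution is precisely a vector simultaneously fixed by $T_1,T_2$ and by $\iota$.

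For \emph{existence} I would split into the two monodromy cases of Section \ref{Monodromy}. In Case (a), simultaneously diagonalize $(N_1,N_2)$ on $(y_1,y_2)$ so that $y_j(z+\omega_k)=\lambda_{jk}y_j(z)$ with $\lambda_{1k}\lambda_{2k}=1$; since $I$ is even, $y_1(-z)$ is also a simultaneous eigenfunction but with multipliers $\lambda_{1k}^{-1}=\lambda_{2k}$, and the hypothesis $(r,s)\notin\tfrac12\mathbb{Z}^2$ from (\ref{complete-rs}) rules out coincidence of the two joint-eigenlines, forcing $y_1(-z)=c\,y_2(z)$ for some $c\neq 0$. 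Consequently $\Phi_e:=y_1y_2$ is both even (evaluate at $-z$) and elliptic (multipliers multiply to $1$). In Case (b), take $y_1$ a common eigenfunction with multipliers $\varepsilon_j\in\{\pm1\}$; the one-dimensional common eigenspace is preserved by $\iota$ (since $\varepsilon_j^{-1}=\varepsilon_j$), so $y_1(-z)=\pm y_1(z)$, and $\Phi_e:=y_1^2$ is even and elliptic.

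For \emph{uniqueness} I would intersect the fixed subspaces of $\operatorname{Sym}^2(N_1)$ and $\operatorname{Sym}^2(N_2)$ in $V$. In Case (a), both matrices are diagonal on $\{y_1^2,y_1y_2,y_2^2\}$ with eigenvalues $\{\lambda_{jk}^{-2},1,\lambda_{jk}^{2}\}$, and (\ref{complete-rs}) guarantees $\lambda_{jk}^{\pm 2}\neq 1$ for at least one of $j=1,2$ on both $y_1^2$ and $y_2^2$, leaving only the line $\mathbb{C}\cdot y_1y_2$. In Case (b), $\operatorname{Sym}^2(N_j)$ is a $3\times 3$ unipotent upper-triangular matrix with $(1,2)$-entry $\mathcal{C}_j$ and $(2,3)$-entry $2\mathcal{C}_j$, and solving $(\operatorname{Sym}^2(N_j)-I)v=0$ directly gives the fixed line $\mathbb{C}\cdot y_1^2$; the degenerate subcase $\mathcal{C}=\infty$ in (\ref{Mono-31}) is handled identically after swapping $N_1\leftrightarrow N_2$. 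The main technical obstacle I foresee lies not in this algebra but in rigorously producing the simultaneous Hermite--Floquet eigenfunctions $y_j$ as honest meromorphic functions on $\mathbb{C}\setminus(L+\Lambda_\tau)$ despite singularities at $\pm p$; this relies on the apparent-singularity relation (\ref{apparent-B}) and I would quote it from Takemura and Part I rather than redo the local Frobenius analysis.
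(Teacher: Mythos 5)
Your proposal is correct. The existence half is essentially the paper's own argument: the paper takes a common eigen-solution $y_{1}(z)$ of the abelian monodromy group and sets $\Phi_{e}(z)=y_{1}(z)y_{1}(-z)$, which is manifestly even and elliptic in both Cases (a) and (b) at once, so your case-split (diagonalizing in Case (a) to get $y_{1}(-z)=c\,y_{2}(z)$, and using $\varepsilon_{j}=\pm1$ in Case (b) to get $y_{1}(-z)=\pm y_{1}(z)$) reproduces the same construction with a little more bookkeeping. Where you genuinely add something is the uniqueness half: the paper does not prove it, deferring to \cite[Proposition 2.9]{Takemura} and Part I \cite[Proposition 2.3]{CKL1}, whereas your computation of the joint fixed space of $\operatorname{Sym}^{2}(N_{1})$ and $\operatorname{Sym}^{2}(N_{2})$ inside the three-dimensional solution space of (\ref{303-1}) is a clean, self-contained substitute. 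The linear algebra checks out: in Case (a) the condition $(r,s)\notin\frac{1}{2}\mathbb{Z}^{2}$ from (\ref{complete-rs}) kills the $y_{1}^{2}$ and $y_{2}^{2}$ eigenlines, and in Case (b) at least one of the two unipotent symmetric squares is non-identity (since not both $N_{j}$ can be scalar in the not-completely-reducible case), pinning the fixed space to $\mathbb{C}\cdot y_{1}^{2}$; in each case the surviving line is spanned by an even function, so the even elliptic solution is unique up to scalar. Your closing caveat is also well placed: the only ingredient you must import is that elements of the symmetric-product solution space are single-valued meromorphic on $\mathbb{C}$ (local monodromy $-I_{2}$ at $\pm p$ squares to the identity and (\ref{apparent-B}) excludes logarithms), which is exactly what the paper quotes from \cite{Takemura} and Part I.
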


\begin{proof}
Fix any base point $q_{0}\in E_{\tau}\backslash(E_{\tau}[2]\cup \{ \pm \lbrack
p]\})$. Since the local monodromy matrice at $\frac{\omega_k}{2}$ is $I_2$, the monodromy representation of GLE (\ref{54-12}) is reduced to $\rho:\pi_{1}(  E_{\tau}%
\backslash\{  \pm \lbrack p]\}  ,q_{0})  \rightarrow
SL(2,\mathbb{C})$. Let
$\gamma_{\pm}\in \pi_{1}(  E_{\tau}%
\backslash\{  \pm \lbrack p]\}  ,q_{0})   $ be a
simple loop encircling $\pm p$ counterclockwise respectively, and $\ell_{j}%
\in \pi_{1}(  E_{\tau}%
\backslash\{  \pm \lbrack p]\}  ,q_{0})  $, $j=1,2$, be two
fundamental cycles of $E_{\tau}$ connecting $q_{0}$ with $q_{0}+\omega_{j}$
such that $\ell_{j}$ does not intersect with $L+\Lambda_{\tau}$ (here $L$ is
the straight segment connecting $\pm p$) and satisfies%
\begin{equation}
\gamma_{-}\gamma_{+}=\ell_{1}\ell_{2}\ell_{1}^{-1}\ell_{2}^{-1}\text{ \  \ in
}\pi_{1}\left(  E_{\tau}\backslash \left \{  \pm \lbrack p]\right \}
,q_{0}\right)  . \label{II-iv1}%
\end{equation}
Since
\begin{equation}
\rho(\gamma_{\pm})=-I_{2}, \label{89-21}%
\end{equation}
we have $N_j=\rho(\ell_{j})$, $N_1N_2=N_2N_1$
and the monodromy group of (\ref{54-12}) is generated by $\{-I_{2},N_1,N_2\}$, namely is abelian. So there is a
common eigenfunction (or called \emph{eigen-solution}) $y_{1}(z)$ of all
monodromy matrices. Let
$\varepsilon_{i}$ be the eigenvalue: $\ell_{i}^{\ast}y_{1}(z)=\varepsilon
_{i}y_{1}(z)$, where $\ell^{\ast}y(z)$ denotes the analytic continuation of
$y(z)$ along the loop $\ell$. Note that $y_{1}(z)$ have branch points only at
$\pm p+\Lambda_{\tau}$. By (\ref{89-21}), $y_{1}(z)$ can be viewed as a single-valued
meromorphic function in $\mathbb{C}\backslash(L+\Lambda_{\tau})$, and in this
region, $y_{1}(-z)$ is well-defined and%
\begin{equation}
y_{1}(z+\omega_{i})=\ell_{i}^{\ast}y_{1}(z)=\varepsilon_{i}y_{1}(z),\text{
}i=1,2, \label{304-111}%
\end{equation}
since the fundamental circles are chosen not to intersect with $L+\Lambda
_{\tau}$.

Let $y_{2}(z)=y_{1}(-z)$ in $\mathbb{C}\backslash(L+\Lambda_{\tau})$. Clearly
$y_{2}(z)$ is also a solution of (\ref{54-12}) and (\ref{304-111}) implies
\begin{equation}
y_{2}(z+\omega_{i})=\ell_{i}^{\ast}y_{2}(z)=\varepsilon_{i}^{-1}%
y_{2}(z),\text{ }i=1,2, \label{304-12}%
\end{equation}
i.e. $y_{2}(z)$ is also an eigenfunction with eigenvalue $\varepsilon_{i}^{-1}$.
Define
\[
\Phi_{e}(z):=y_{1}(z)y_{2}(z)=y_{1}(z)y_{1}(-z).
\]
Obviously, $\pm \lbrack p]$ are no longer branch points of $\Phi
_{e}(z)$, which implies that $\Phi_{e}(z)$ is single-valued meromorphic in
$\mathbb{C}$. By (\ref{304-111})-(\ref{304-12}), $\Phi_{e}(z)$ is an even
elliptic function. This proves the existence part.
\end{proof}

Since $\Phi_{e}(z)$ have poles at most at $\frac{\omega_{k}}{2}$ with order
$2n_{k}$ and at $\pm p$ with order $2$, we have%
\[
\Phi_{e}(z)=C_{0}+\sum_{k=0}^{3}\sum_{j=0}^{n_{k}-1}b_{j}^{(k)}\wp
(z+\tfrac{\omega_{k}}{2})^{n_{k}-j}+\frac{d}{\wp(z)-\wp(p)},
\]
where $C_{0},b_{j}^{(k)}$ and $d$ are constants depending on $\mathbf{n}%
,A,p,\tau$. By a careful computation, it was proved in \cite{Takemura,Takemura1}
that\medskip

\noindent \textbf{Theorem 2.A.} \cite{Takemura,Takemura1} \emph{After a normalization of multiplying a nonzero constant depending on $\mathbf{n}%
,A,p,\tau$,}%
\begin{equation}\label{even-ell}
\Phi_{e}(z)=C_{0}(A)  +\sum_{k=0}^{3}\sum_{j=0}%
^{n_{k}-1}b_{j}^{(k)}(A)\wp(z+\tfrac{\omega_{k}}{2})^{n_{k}-j}+\frac{d(A)}%
{\wp(z)-\wp(p)},
\end{equation}
\emph{where }$C_{0}(A)=C_{0}(A;p,\tau),$
$b_{j}^{(k)}(A)=b_{j}^{(k)}(A;p,\tau)$\emph{ and }$d(A)=d(A;p,\tau)$\emph{ are
all polynomials of }$A$ \emph{with cofficients being rational functions of
}$\wp(p)$, $\wp^{\prime}(p)$, $e_{k}(\tau)^{\prime}$\emph{s, and they do not
have common zeros, and the leading coefficient of }$C_{0}(A)  $\emph{ can be chosen to be
$\frac{1}{2}$. Moreover, }
\[
g:=\deg_{A}C_{0}(A)>\max \left\{ \deg_{A}b_{j}^{(k)}(A),\deg_{A}d(A)\right\}.
\]

Theorem 2.A will be applied in the proof of Theorems \ref{thm-II-8}-\ref{thm-II-8-0} below.

\subsection{The Hermite-Halphen ansatz}

Let $N=\sum_{k=0}^{3}n_{k}+1$ in this section. For any $\boldsymbol{a}=(  a_{1}
,\cdot \cdot \cdot,a_{N})\in \mathbb{C}^{N}$, we consider the
Hermite-Halphen ansatz
\begin{equation}
y_{\boldsymbol{a}}(z):=\frac{e^{cz}\prod_{i=1}^{N}\sigma(z-a_{i})}%
{\sqrt{\sigma(z-p)\sigma(z+p)}\prod_{k=0}^{3}\sigma(z-\frac{\omega_{k}}%
{2})^{n_{k}}},\text{ }c\in \mathbb{C}. \label{exp1}%
\end{equation}
In Part I \cite{CKL1} we proved
that the common eigen-solution of GLE$(\mathbf{n},A,p)$ must be of the form
$y_{\boldsymbol{a}}(z)$.\medskip

\noindent \textbf{Theorem 2.B.} \cite{CKL1} \textit{Let }$y_{1}(z)$ \textit{be
the common eigen-solution in Lemma \ref{lem6.3}. Then up to a nonzero
constant, }%
\[
y_{1}(z)=y_{\boldsymbol{a}}(z)
\]
\textit{for some }$\boldsymbol{a}=(a_{1},\cdot \cdot
\cdot,a_{N})\in \mathbb{C}^{N}$\textit{ and }$c=c(
\boldsymbol{a})  \in \mathbb{C}$.\medskip

\begin{proof} We sketch the proof here for the reader's convenience. Define
\begin{equation}\label{psipz}
\Psi_{p}(z):=\frac{\sigma(z)}{\sqrt{\sigma(z+p)\sigma(z-p)}}.
\end{equation}
Since $\Psi_{p}(z)^{2}$ is even elliptic and $\ell_{j}$ is chosen to have no
intersection with $L+\Lambda_{\tau}$, we proved in Part I \cite[Lemma 2.2]{CKL1} that $\Psi
_{p}(z)$ is invariant under analytic continuation along $\ell_{j}$, i.e.%
\begin{equation}\label{psii}
\ell_{j}^{\ast}\Psi_{p}(z)=\Psi_{p}(z),\quad j=1,2.
\end{equation}
Since $y_1(z)$ has branch points at $\pm p$, we set
$\tilde{y}(z):=y_1(z)/\Psi_p(z)$.
Then $\tilde{y}(z)$ is
meromorphic, and it follows from (\ref{304-111}) and \eqref{psii} that
\begin{equation}\label{exp1111}
\tilde{y}(z+\omega_i)=\varepsilon_{i}\tilde{y}(z),\quad i=1,2,
\end{equation}%
namely $\tilde{y}(z)$ is \emph{elliptic of the second kind} with periods $1$ and $\tau$. Then a classic theorem says that up to a constant, $\tilde{y}(z)$ can be written as
\begin{equation}\label{ytilde}
\tilde{y}(z)=\frac{e^{cz}\prod_{i=1}^{N}\sigma (z-a_{i})}{\sigma
(z)\prod_{k=0}^{3}\sigma (z-\frac{\omega _{k}}{2})^{n_{k}}},
\end{equation}
for some $\boldsymbol{a}=(a_{1},\cdot \cdot
\cdot,a_{N})\in \mathbb{C}^{N}$ and $c\in\mathbb{C}$,
because $\tilde{y}(z)$ have poles at most at $0$ with order $n_0+1$ and at $\omega _{k}/2$ with order $n_k$, $k=1,2,3$. The constant $c=c(\boldsymbol{a})$ can be determined; see (\ref{61-38}) below.
\end{proof}

\begin{remark} Generically $\{[a_{1}],\cdot \cdot
\cdot, [a_{N}]\}$ is precisely the zero set of $y_1(z)=y_{\boldsymbol{a}}(z)$. For some special $A$'s, the local exponent of $y_1(z)$ at $p$ might be $\frac{3}{2}$, so there are two points in $\{[a_{1}],\cdot \cdot
\cdot, [a_{N}]\}$ being $[p]$, say $[a_{N-1}]=[a_{N}]=[p]$ for example, and in this case the zero set of $y_1(z)$ is contained in $\{[a_{1}],\cdot \cdot
\cdot, [a_{N-2}]\}$. Similarly, $\{[a_{1}],\cdot \cdot
\cdot, [a_{N}]\}$ might contain $\frac{\omega_k}{2}$'s for special $A$'s.
\end{remark}

Although $y_{\boldsymbol{a}}(z)$ is a multi-valued function in $\mathbb{C}$,
$y_{\boldsymbol{a}}(-z)$ can be well-defined as shown in the proof of
Lemma \ref{lem6.3}, and $y_{\boldsymbol{a}}(-z)$ is also a common
eigen-solution. By using the transformation law (let $\eta_3=\eta_1+\eta_2$)
\begin{equation}
\sigma(z+\omega_{k})=-e^{\eta_{k}(z+\frac{\omega_{k}}{2})}\sigma(z),\quad k=1,2,3,
\label{518}%
\end{equation}
it is easy to see that in $\mathbb{C}\backslash(L+\Lambda_{\tau})$,
\begin{equation}
y_2(z)=y_{\boldsymbol{a}}(-z)=y_{-\boldsymbol{a}}(z)\text{ \ up to a nonzero
constant,} \label{a-a}%
\end{equation}
which infers
\begin{equation}
\Phi_{e}(z)=y_{\boldsymbol{a}}(z)y_{-\boldsymbol{a}}(z)\text{ \ up to a
nonzero constant.} \label{a-a1}%
\end{equation}
By the uniqueness of $\Phi_{e}(z)$, we easily see that $\pm \boldsymbol{a}%
\operatorname{mod}\Lambda_{\tau}$ is unique, i.e.%
\begin{equation}
\pm \{[a_{1}],\cdot \cdot \cdot,[a_{N}]\} \text{ \emph{is unique for given GLE}%
}(\mathbf{n},p,A,\tau), \label{aunique}%
\end{equation}
and for different representatives $\boldsymbol{a}, \tilde{\boldsymbol{a}}\in\mathbb{C}^N$ of the same $\{[a_{1}],\cdots,[a_{N}]\}$,
\begin{equation}
y_{\boldsymbol{a}}(z)=y_{\tilde{\boldsymbol{a}}}(z)\text{ \ up to a nonzero
constant.} \label{aunique11}%
\end{equation}

If $y_{\boldsymbol{a}}(z)$ and $y_{-\boldsymbol{a}}(z)$ are linearly
independent, then the monodromy is completely reducible by definition. The
following result shows that the converse assertion also holds, and in this case the monodromy data can be easily computed.

\begin{theorem}
\label{thm0} \cite{CKL1} If the monodromy of GLE$(\mathbf{n},p,A,\tau)$ is completely
reducible, then $y_{\boldsymbol{a}}(z)$ and
$y_{-\boldsymbol{a}}(z)$ are linearly independent and there exists
$(r,s)\in \mathbb{C}^{2}\backslash \frac{1}{2}\mathbb{Z}^{2}$ such
that with respect to $y_{\boldsymbol{a}}(z)$ and $y_{-\boldsymbol{a}}(z)$,
\begin{equation}
N_1=\rho(\ell_{1})=%
\begin{pmatrix}
e^{-2\pi is} & 0\\
0 & e^{2\pi is}%
\end{pmatrix}
,\text{ }N_2=\rho(\ell_{2})=%
\begin{pmatrix}
e^{2\pi ir} & 0\\
0 & e^{-2\pi ir}%
\end{pmatrix},
\label{61.35}%
\end{equation}
and
\begin{equation}
\sum_{i=1}^{N}a_{i}-\sum_{k=1}^{3}\frac{n_{k}\omega_{k}}{2}=r+s\tau, \quad c(\boldsymbol{a})=r\eta_{1}+s\eta_{2}.
\label{61-37}
\end{equation}
Furthermore, if $[a_j]\neq \pm [p]$ for all $j$, then (recall $\eta_3=\eta_1+\eta_2$)
\begin{equation}
c({\boldsymbol a})=\frac{1}{2}\sum_{i=1}^{N}(\zeta
(a_{i}+p)+\zeta(a_{i}-p))-\sum_{k=1}^{3}\frac{n_{k}\eta_{k}}{2}. \label{61-38}
\end{equation}
\end{theorem}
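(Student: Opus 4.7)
The plan is to verify the three assertions of the theorem in turn. For the linear independence of $y_{\boldsymbol a}$ and $y_{-\boldsymbol a}$, I would argue by contradiction: suppose $y_{-\boldsymbol{a}} = \lambda y_{\boldsymbol{a}}$ for some $\lambda \in \mathbb{C}^{\ast}$. The computation \eqref{304-12} in the proof of Lemma \ref{lem6.3} shows that $y_{-\boldsymbol{a}}$ is a common eigen-solution with eigenvalues $\varepsilon_j^{-1}$ under $\ell_j^{\ast}$, while $y_{\boldsymbol{a}}$ has eigenvalues $\varepsilon_j$; the assumed proportionality forces $\varepsilon_j \in \{\pm 1\}$. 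In Case (a) the two $N_j \in SL(2,\mathbb{C})$ are simultaneously diagonalizable with $\varepsilon_j$ as one eigenvalue, and since $\det N_j = 1$ the other eigenvalue equals $\varepsilon_j^{-1} = \varepsilon_j$, hence $N_j = \pm I_2$, contradicting \eqref{complete-rs}. Therefore $(y_{\boldsymbol a}, y_{-\boldsymbol a})$ is a basis, in which $N_j = \mathrm{diag}(\varepsilon_j, \varepsilon_j^{-1})$; writing $\varepsilon_1 = e^{-2\pi i s}$ and $\varepsilon_2 = e^{2\pi i r}$ produces \eqref{61.35}, and the same argument excludes $(r,s) \in \tfrac{1}{2}\mathbb{Z}^2$.

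To establish \eqref{61-37}, I would compare $\tilde y(z+\omega_j)$ with $\varepsilon_j \tilde y(z)$ for the single-valued factor $\tilde y$ in \eqref{ytilde}, which is legitimate since $\Psi_p(z+\omega_j) = \Psi_p(z)$ by \eqref{psii}. Because the numerator and denominator of $\tilde y$ each carry exactly $N = 1 + \sum_k n_k$ sigma factors, the $(-1)^N$ signs produced by the transformation law \eqref{518} cancel, and the exponential prefactors collapse to
\[
\frac{\tilde y(z+\omega_j)}{\tilde y(z)} = \exp\!\left( c\,\omega_j - \eta_j \left( \sum_{i=1}^N a_i - \sum_{k=1}^{3} \frac{n_k \omega_k}{2} \right) \right) = \varepsilon_j, \quad j=1,2.
\]
Eliminating $c$ between the $j=1$ and $j=2$ equations via the Legendre relation $\eta_1 \tau - \eta_2 = 2\pi i$ gives $\sum_{i} a_i - \sum_{k=1}^{3} n_k \omega_k/2 \equiv r + s\tau \pmod{\Lambda_\tau}$; choosing representatives $a_i \in \mathbb{C}$ turns this into an equality, and back-substitution then yields $c(\boldsymbol a) = r\eta_1 + s\eta_2$.

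For \eqref{61-38}, under the hypothesis $[a_j] \notin \{\pm[p]\}$ for all $j$, I would exploit the Riccati form $u' + u^{2} = I_{\mathbf n}(z;p,A,\tau)$ of the ODE, where $u := y_{\boldsymbol a}'/y_{\boldsymbol a}$. Logarithmic differentiation of \eqref{exp1} (using $\Psi_p'/\Psi_p = \zeta(z) - \tfrac12\zeta(z-p) - \tfrac12\zeta(z+p)$) yields
\[
u(z) = c - n_0\zeta(z) + \sum_{i=1}^{N} \zeta(z-a_i) - \sum_{k=1}^{3} n_k\,\zeta(z-\tfrac{\omega_k}{2}) - \tfrac{1}{2}\zeta(z-p) - \tfrac{1}{2}\zeta(z+p).
\]
Near $z = \pm p$, $u$ has simple poles of residue $-\tfrac{1}{2}$, so the $1/(z \mp p)$ coefficient of $u' + u^{2}$ equals $-u_{0}^{\pm}$, where $u_{0}^{\pm}$ denotes the regular value of $u + \tfrac{1/2}{z \mp p}$ at $\pm p$. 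Matching these with $\mathrm{Res}_{\pm p}\, I_{\mathbf n} = \mp A$ gives two scalar identities in $c$ and $A$; their sum cancels the $\pm A$, $\pm\tfrac12 \zeta(2p)$, and $\mp n_0 \zeta(p)$ contributions, and collapsing $\sum_i[\zeta(p+a_i) - \zeta(p-a_i)] = \sum_i[\zeta(a_i+p) + \zeta(a_i-p)]$ by oddness of $\zeta$, together with the quasi-periodicity identity $\zeta(p+\omega_k/2) - \zeta(p-\omega_k/2) = \eta_k$, produces precisely twice the right-hand side of \eqref{61-38}. The only genuine obstacle is the sign bookkeeping in this last addition; everything else rests on Theorem 2.B and standard transformation laws for $\sigma$ and $\zeta$.
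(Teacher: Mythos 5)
Your treatment of \eqref{61-37} coincides with the paper's (the identity \eqref{translaw} followed by the definition \eqref{ca-rs} and the Legendre relation), and your derivation of \eqref{61-38} by expanding $u=y_{\boldsymbol a}'/y_{\boldsymbol a}$ at $\pm p$ and summing the two residue conditions is exactly the computation the paper compresses into one sentence (``inserting \eqref{exp1} into the GLE and computing the leading terms at $\pm p$''); your bookkeeping there checks out, since $u_0^++u_0^-=0$ does cancel the $\pm A$ and $\pm\tfrac12\zeta(2p)$ terms and the $\omega_k$-terms collapse to $\sum_k n_k\eta_k$ via $\zeta(p+\tfrac{\omega_k}{2})-\zeta(p-\tfrac{\omega_k}{2})=\eta_k$.

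Where you genuinely diverge from the paper is in proving linear independence of $y_{\pm\boldsymbol a}$ and in excluding $(r,s)\in\frac12\mathbb{Z}^2$: you derive both from the trace condition \eqref{complete-rs}. Be aware that \eqref{complete-rs} is \emph{not} a formal consequence of the definition of complete reducibility --- if $N_1,N_2\in\{\pm I_2\}$ then every solution is a common eigenfunction, so the monodromy would still be ``completely reducible'' in the sense of Case (a), yet \eqref{complete-rs} would fail. The statement \eqref{complete-rs} is really a repackaging of the normal form \eqref{Mono-1} with $(r,s)\notin\frac12\mathbb{Z}^2$, i.e.\ of the very conclusion Theorem \ref{thm0} is establishing, so invoking it here is circular unless you import it as a black box from Part I. The paper closes this gap differently, using the uniqueness of the even elliptic solution $\Phi_e$ from Lemma \ref{lem6.3}: any common eigen-solution $y_3$ satisfies $y_3(z)y_3(-z)=\Phi_e(z)=y_{\boldsymbol a}(z)y_{-\boldsymbol a}(z)$ up to constants, whence $y_3$ is proportional to $y_{\boldsymbol a}$ or $y_{-\boldsymbol a}$; this simultaneously yields the linear independence (take $y_3$ independent of $y_{\boldsymbol a}$, forcing $y_3=cy_{-\boldsymbol a}$) and rules out $(r,s)\in\frac12\mathbb{Z}^2$ (otherwise $y_{\boldsymbol a}+y_{-\boldsymbol a}$ would be a common eigen-solution proportional to neither). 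If you replace your appeal to \eqref{complete-rs} by this $\Phi_e$-uniqueness argument, your proof becomes self-contained and matches the paper's.
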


\begin{proof}
This result was proved in Part I \cite{CKL1}. Here we
sketch the proof for later usage.
Let $y_{3}(z)$ be another common eigen-solution which is linearly independent
to $y_{\boldsymbol{a}}(z)$. Clearly $y_{3}(z)y_{3}(-z)$ is also an even
elliptic solution of (\ref{303-1}), so up to nonzero constants,
\begin{equation}
y_{3}(z)y_{3}(-z)=\Phi_{e}(z)=y_{\boldsymbol{a}}(z)y_{-\boldsymbol{a}}(z).
\label{kk--k}%
\end{equation}
Then a zero of $y_{3}(z)$ must be a zero of $y_{-\boldsymbol{a}}(z)$ and vice
versa, so $y_{3}(z)=y_{-\boldsymbol{a}}(z)$ up to a
nonzero constant, namely $y_{\boldsymbol{a}}(z)$ and $y_{-\boldsymbol{a}}(z)$ are linearly independent.

Rewrite%
\begin{equation}
y_{\boldsymbol{a}}(z)=\frac{e^{c(\boldsymbol{a})z}\prod_{j=1}^{N}
\sigma(z-a_{j})}{\sigma(z)\prod_{k=0}^{3}\sigma(z-\tfrac{\omega_{k}}
{2})^{n_{k}}}\cdot \Psi_{p}(z), \label{61-11}
\end{equation}
where $\Psi_{p}(z)$ is defined in (\ref{psipz}).
Then by applying (\ref{psii}) and the transformation law (\ref{518}) to $y_{\boldsymbol{a}}(z)/\Psi_{p}(z)$, we have
\begin{equation}\label{translaw}
\ell_{j}^{\ast}y_{\boldsymbol{a}}(z)=\exp \bigg(  c(\boldsymbol{a})\omega
_{j}-\eta_{j}\bigg(  \sum_{i=1}^{N}a_{i}-\sum_{k=1}^{3}\frac{n_{k}\omega_{k}
}{2}\bigg)  \bigg)  y_{\boldsymbol{a}}(z),\;j=1,2.
\end{equation}
Define $(r,s)\in \mathbb{C}^{2}$ by%
\[
c(\boldsymbol{a})-\eta_{1}\bigg(  \sum_{i=1}^{N}a_{i}-\sum_{k=1}^{3}%
\frac{n_{k}\omega_{k}}{2}\bigg)  =-2\pi is,
\]%
\begin{equation}\label{ca-rs}
c(\boldsymbol{a})\tau-\eta_{2}\bigg(  \sum_{i=1}^{N}a_{i}-\sum_{k=1}^{3}%
\frac{n_{k}\omega_{k}}{2}\bigg)  =2\pi ir.
\end{equation}
Then (\ref{61-37}) follows by using $\tau\eta_{1}-\eta_{2}=2\pi i$. Recalling the eigenvalues
$\varepsilon_{1},\varepsilon_{2}$ in Lemma \ref{lem6.3}, we see from Theorem
2.B and (\ref{translaw})-(\ref{a-a}) that $(\varepsilon_{1},\varepsilon_{2})=(e^{-2\pi
is},e^{2\pi ir})$ and hence (\ref{61.35}) holds. If both $e^{2\pi is}$
and $e^{2\pi ir}\in \{ \pm1\}$, then $y_{\boldsymbol{a}}(z)+y_{-\boldsymbol{a}%
}(z)$ is also a common eigen-solution, and the same argument as (\ref{kk--k})
gives $y_{\boldsymbol{a}}(z)+y_{-\boldsymbol{a}}(z)=c_{\pm}%
y_{\pm \boldsymbol{a}}(z)$ for some constant $c_{\pm}$, a contradiction.
So either $e^{2\pi ir}\not \in \{ \pm1\}$ or $e^{2\pi is}\not \in \{
\pm1\}$, i.e. $(r,s)\not \in \frac{1}{2}\mathbb{Z}^{2}$. Finally, (\ref{61-38}) follows by inserting
(\ref{exp1}) into GLE$({\bf n},p,A)$ and computing the leading terms at
singularities $\pm p$. This completes the proof.
\end{proof}

Now we consider the not completely reducible case.

\begin{theorem}
\label{thm0-1}Suppose the monodromy of GLE$(\mathbf{n}, p, A, \tau)$ is not completely
reducible. Then
\begin{equation}
\{[a_{1}],\cdot \cdot \cdot,[a_{N}]\}=\{-[a_{1}],\cdot \cdot \cdot,-[a_{N}]\},
\label{a=-a}%
\end{equation}
and there exists
$(r,s)\in \frac{1}{2}\mathbb{Z}^{2}$ such that
\begin{equation}\label{61-37-5}
\sum_{i=1}^{N}a_{i}-\sum_{k=1}^{3}\frac{n_{k}\omega_{k}}{2}=r+s\tau,
\quad c(\boldsymbol{a})=r\eta_{1}+s\eta_{2}.
\end{equation}%
Furthermore, there exist linearly independent solutions such that $\rho
(\ell_{1})$ and $\rho(\ell_{2})$ can be expressed as%
\begin{equation}
\rho(\ell_{1})=\varepsilon_{1}%
\begin{pmatrix}
1 & 0\\
1 & 1
\end{pmatrix}
,\text{ \  \  \ }\rho(\ell_{2})=\varepsilon_{2}%
\begin{pmatrix}
1 & 0\\
\mathcal{C} & 1
\end{pmatrix}
, \label{Mono-211}%
\end{equation}
with $\mathcal{C}
\in \mathbb{C}\cup \{ \infty \}$ and
\begin{equation}
(\varepsilon_{1},\varepsilon_{2})=\left \{
\begin{array}
[c]{l}%
(1,1),\text{ if }(r,s)\equiv (0,0)\mod\mathbb{Z}^2,\\
(1,-1),\text{ if }(r,s)\equiv (\frac{1}{2},0)\mod\mathbb{Z}^2,\\
(-1,1),\text{ if }(r,s)\equiv (0,\frac{1}{2})\mod\mathbb{Z}^2,\\
(-1,-1),\text{ if }(r,s)\equiv (\frac{1}{2},\frac{1}{2})\mod\mathbb{Z}^2.
\end{array}
\right.  \label{trace00}%
\end{equation}
Remark that if $\mathcal{C}=\infty$, then
(\ref{Mono-211}) should be understood as%
\begin{equation}
\rho(\ell_{1})=\varepsilon_{1}%
\begin{pmatrix}
1 & 0\\
0 & 1
\end{pmatrix}
,\text{ \  \  \ }\rho(\ell_{2})=\varepsilon_{2}%
\begin{pmatrix}
1 & 0\\
1 & 1
\end{pmatrix}
. \label{Mono-212}%
\end{equation}

\end{theorem}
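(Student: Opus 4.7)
The plan is to exploit the fact that in the not completely reducible case, the common eigen-space is $1$-dimensional, which will force $y_{\boldsymbol{a}}(z)$ and $y_{-\boldsymbol{a}}(z)$ to be linearly dependent, and then to recycle most of the computations already used in the proof of Theorem \ref{thm0}.

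First, by Theorem 2.B, the (unique up to scalar) common eigen-solution is of the form $y_1(z)=y_{\boldsymbol{a}}(z)$ for some $\boldsymbol{a}\in\mathbb{C}^N$. As in the proof of Lemma \ref{lem6.3}, $y_{\boldsymbol{a}}(-z)$ is again a common eigen-solution whose eigenvalues along $\ell_1,\ell_2$ are $\varepsilon_1^{-1},\varepsilon_2^{-1}$. Since the common eigen-space is $1$-dimensional by assumption, $y_{\boldsymbol{a}}(z)$ and $y_{-\boldsymbol{a}}(z)=y_{\boldsymbol{a}}(-z)$ must be proportional (up to a nonzero constant), which in particular forces $\varepsilon_j=\varepsilon_j^{-1}$, i.e. $\varepsilon_j\in\{\pm1\}$. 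Matching the zero divisors on $E_\tau$ of these proportional Hermite--Halphen expressions then yields (\ref{a=-a}).

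Next, I will run the eigenvalue computation verbatim from the proof of Theorem \ref{thm0}. The transformation law (\ref{translaw}) gives
\[
\varepsilon_j \;=\; \exp\!\left(c(\boldsymbol{a})\omega_j-\eta_j\Big(\sum_{i=1}^N a_i-\sum_{k=1}^3\tfrac{n_k\omega_k}{2}\Big)\right),\qquad j=1,2.
\]
Define $(r,s)\in\mathbb{C}^2$ by the same relations (\ref{ca-rs}). Then exactly as before $(\varepsilon_1,\varepsilon_2)=(e^{-2\pi is},e^{2\pi ir})$, and the Legendre relation $\tau\eta_1-\eta_2=2\pi i$ gives (\ref{61-37-5}). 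The constraint $\varepsilon_j\in\{\pm1\}$ now translates directly into $(r,s)\in\frac12\mathbb{Z}^2$, and reading off the signs case by case according to the parity of $2r,2s\pmod 2$ yields the table (\ref{trace00}).

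Finally, I will derive the normal form (\ref{Mono-211}). Since $N_1,N_2$ commute and share a $1$-dimensional eigen-line with eigenvalues $\varepsilon_1,\varepsilon_2$ respectively, each $N_j$ has $\varepsilon_j$ as its only eigenvalue (trace $=2\varepsilon_j$); equivalently $\varepsilon_j^{-1}N_j$ is unipotent. Choose a basis whose first vector is a generator of the common eigen-line, so that both $N_j$ become lower triangular: $N_j=\varepsilon_j\bigl(\begin{smallmatrix}1&0\\ x_j&1\end{smallmatrix}\bigr)$. Because the eigen-space is not $2$-dimensional, $(x_1,x_2)\neq(0,0)$. If $x_1\neq0$, rescaling the second basis vector by $x_1$ normalizes $(x_1,x_2)$ to $(1,\mathcal{C})$ with $\mathcal{C}=x_2/x_1\in\mathbb{C}$; if $x_1=0$ then $x_2\neq0$ and the same rescaling gives the $\mathcal{C}=\infty$ form (\ref{Mono-212}). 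No step here is a serious obstacle; the only slightly delicate point is justifying the equality of zero sets on $E_\tau$ in step (\ref{a=-a}) when some $[a_i]\in\{\pm[p],\frac{\omega_k}{2}\}$, which is handled by the same pole/zero bookkeeping on the Hermite--Halphen ansatz used throughout Section \ref{Monodromy}.
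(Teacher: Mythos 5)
Your proposal is correct. The first two thirds coincide with the paper's proof: you use the one-dimensionality of the common eigen-space to force $y_{\boldsymbol{a}}(z)$ and $y_{-\boldsymbol{a}}(z)$ to be proportional, deduce $\varepsilon_j=\varepsilon_j^{-1}$ and the symmetry (\ref{a=-a}) of the zero multiset from the Hermite--Halphen ansatz (\ref{exp1}), and then rerun (\ref{61-11})--(\ref{ca-rs}) verbatim to get (\ref{61-37-5}) and (\ref{trace00}); this is exactly what the paper does.

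Where you genuinely diverge is the derivation of the normal form (\ref{Mono-211}). You argue abstractly: since $\det N_j=1$ and $\varepsilon_j=\pm1$ is an eigenvalue, the other eigenvalue is $1/\varepsilon_j=\varepsilon_j$, so $\varepsilon_j^{-1}N_j$ is unipotent; putting the common eigenvector first makes both matrices lower triangular, and a rescaling of the basis normalizes $(x_1,x_2)$ to $(1,\mathcal{C})$ or the $\mathcal{C}=\infty$ form. This is valid and arguably more elementary. The paper instead takes a second, linearly independent solution $y_2(z)$, sets $\chi(z)=y_2(z)/y_{\boldsymbol{a}}(z)$, and shows $\chi'(z)=\mathrm{const}\cdot\Phi_e(z)^{-1}$ is even elliptic, so that $\chi(z+\omega_j)=\chi(z)+\chi_j$ and $\mathcal{C}=\chi_2/\chi_1$ with $\chi_j=\int_z^{z+\omega_j}\Phi_e(\xi)^{-1}\,d\xi$ up to a common constant. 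The payoff of the paper's more computational route is precisely this integral formula for $\mathcal{C}$, which is not merely decorative: it is invoked later in the proofs of Theorem \ref{thm-II-8-0}-(1) (see (\ref{phi1})--(\ref{phi2})) and of Theorem \ref{thm1}-(2), Case 2, where $\mathcal{C}$ must be tracked under limits of GLEs. Your linear-algebra shortcut proves the theorem as stated, but if you intend to reuse $\mathcal{C}$ downstream you would still need to establish the explicit representation $\mathcal{C}=\int_z^{z+\omega_2}\Phi_e^{-1}\big/\int_z^{z+\omega_1}\Phi_e^{-1}$ separately.
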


\begin{proof}
Since the monodromy is not completely reducible and $y_{\pm \boldsymbol{a}}(z)$
are both common eigen-solutions, we have $y_{\boldsymbol{a}}%
(z)=y_{-\boldsymbol{a}}(z)$ up to a nonzero constant, which implies: (1)
$\varepsilon_{j}=$ $\varepsilon_{j}^{-1}$, i.e. $\varepsilon_{j}=\pm1$ for
$j=1,2$; (2) (\ref{a=-a}) holds by using (\ref{exp1}); (3) $\Phi_{e}(z)=y_{\boldsymbol{a}
}(z)^{2}$ up to a nonzero constant. Again by the same argument as (\ref{61-11})-(\ref{ca-rs}), we easily obtain (\ref{61-37-5}) and (\ref{trace00}).

To prove (\ref{Mono-211}), we let $y_{2}(z)$ be a linearly independent solution of GLE (\ref{54-12}) to
$y_{\boldsymbol{a}}(z)$ and define $\chi(z):={y_{2}(z)}/{y_{\boldsymbol{a}}(z)}$.
Then $\chi(z)\not \equiv $const has no branch points, namely $\chi(z)$ is
single-valued meromorphic. Furthermore, inserting $y_{2}(z)=\chi
(z)y_{\boldsymbol{a}}(z)$ into GLE (\ref{54-12}) leads to%
\[
\frac{\chi^{\prime \prime}(z)}{\chi^{\prime}(z)}+2\frac{y_{\boldsymbol{a}%
}^{\prime}(z)}{y_{\boldsymbol{a}}(z)}=0,\text{ i.e. }\chi^{\prime
}(z)=\text{const}\cdot \Phi_{e}(z)^{-1}\text{ is even elliptic.}%
\]
Thus $\chi(z)$ is quasi-periodic, namely there exist two constants $\chi_{1}$
and $\chi_{2}$ such that%
\[
\chi(z+\omega_{j})=\chi(z)+\chi_{j},\text{ \ }j=1,2.
\]
Since $y_{2}(z)$ is not a common eigen-solution, $\chi_{1}$ and $\chi_{2}$ can
not vanish simultaneously. Define%
\begin{equation}
\mathcal{C}:={\chi_{2}}/{\chi_{1}}. \label{mono-C}%
\end{equation}
If $\chi_{1}=0$, then $\chi_{2}\not =0$, $\mathcal{C}=\infty$ and a direct
computation gives
\[
\ell_{1}^{\ast}%
\begin{pmatrix}
\chi_{2}y_{\boldsymbol{a}}(z)\\
y_{2}(z)
\end{pmatrix}
=\varepsilon_{1}%
\begin{pmatrix}
\chi_{2}y_{\boldsymbol{a}}(z)\\
y_{2}(z)
\end{pmatrix}
,
\]%
\[
\ell_{2}^{\ast}%
\begin{pmatrix}
\chi_{2}y_{\boldsymbol{a}}(z)\\
y_{2}(z)
\end{pmatrix}
=\varepsilon_{2}%
\begin{pmatrix}
1 & 0\\
1 & 1
\end{pmatrix}%
\begin{pmatrix}
\chi_{2}y_{\boldsymbol{a}}(z)\\
y_{2}(z)
\end{pmatrix}
,
\]
which is precisely (\ref{Mono-212}). If $\chi_{1}\not =0$, then $\mathcal{C}%
\not =\infty$ and we easily obtain%
\begin{equation}
\ell_{1}^{\ast}%
\begin{pmatrix}
\chi_{1}y_{\boldsymbol{a}}(z)\\
y_{2}(z)
\end{pmatrix}
=\varepsilon_{1}%
\begin{pmatrix}
1 & 0\\
1 & 1
\end{pmatrix}%
\begin{pmatrix}
\chi_{1}y_{\boldsymbol{a}}(z)\\
y_{2}(z)
\end{pmatrix}
, \label{mono-D1}%
\end{equation}%
\begin{equation}
\ell_{2}^{\ast}%
\begin{pmatrix}
\chi_{1}y_{\boldsymbol{a}}(z)\\
y_{2}(z)
\end{pmatrix}
=\varepsilon_{2}%
\begin{pmatrix}
1 & 0\\
\mathcal{C} & 1
\end{pmatrix}%
\begin{pmatrix}
\chi_{1}y_{\boldsymbol{a}}(z)\\
y_{2}(z)
\end{pmatrix}
, \label{mono-D}%
\end{equation}
which is precisely (\ref{Mono-211}). This completes the proof.
\end{proof}

\begin{corollary}
\label{coro4}The monodromy of GLE$(\mathbf{n}, p, A, \tau)$ is completely reducible if and
only if
\begin{equation}
(\text{tr}\rho(\ell_{1}),\text{tr}\rho(\ell_{2}))\not \in \{ \pm
(2,2),\pm(2,-2)\}. \label{II-110}%
\end{equation}
\end{corollary}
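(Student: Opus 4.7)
The plan is to deduce Corollary \ref{coro4} as an immediate corollary of Theorems \ref{thm0} and \ref{thm0-1}, which between them cover both the completely reducible and the not completely reducible cases and pin down the trace pairs in each case. The statement is an ``if and only if'', so I would split it into two implications, but both reduce to reading off traces from the normal forms that have already been established.

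For the forward direction (completely reducible $\Rightarrow$ (\ref{II-110})), I would invoke Theorem \ref{thm0}: there exist $y_{\boldsymbol{a}}(z)$ and $y_{-\boldsymbol{a}}(z)$ linearly independent, giving the diagonal representation (\ref{61.35}) with some $(r,s)\in\mathbb{C}^2\setminus \tfrac12\mathbb{Z}^2$. Then $\mathrm{tr}\,\rho(\ell_1)=2\cos 2\pi s$ and $\mathrm{tr}\,\rho(\ell_2)=2\cos 2\pi r$, and since $(r,s)\notin \tfrac12\mathbb{Z}^2$, at least one of $s,r$ avoids $\tfrac12\mathbb{Z}$, so at least one of the two traces lies strictly inside $(-2,2)$. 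This rules out $(\mathrm{tr}\,\rho(\ell_1),\mathrm{tr}\,\rho(\ell_2))\in\{\pm(2,2),\pm(2,-2)\}$ and yields (\ref{II-110}). Since trace is conjugation invariant, this is independent of the choice of basis.

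For the reverse direction I would argue by contrapositive: assume the monodromy is not completely reducible, and show the trace pair lies in $\{\pm(2,2),\pm(2,-2)\}$. By Theorem \ref{thm0-1}, there is a basis in which $\rho(\ell_1)$ and $\rho(\ell_2)$ take the unipotent-up-to-sign form (\ref{Mono-211}) (or (\ref{Mono-212}) if $\mathcal{C}=\infty$), with $\varepsilon_1,\varepsilon_2\in\{\pm1\}$. In either normal form one reads off $\mathrm{tr}\,\rho(\ell_j)=2\varepsilon_j$, so
\[
(\mathrm{tr}\,\rho(\ell_1),\mathrm{tr}\,\rho(\ell_2))=(2\varepsilon_1,2\varepsilon_2)\in\{\pm(2,2),\pm(2,-2)\},
\]
again using that the trace does not depend on the chosen basis. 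This is the negation of (\ref{II-110}), closing the contrapositive.

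There is essentially no obstacle here beyond bookkeeping, since Theorems \ref{thm0} and \ref{thm0-1} already do the substantive work of producing explicit normal forms for $\rho(\ell_1),\rho(\ell_2)$ in the two dichotomous cases. The only point worth flagging is that one must remember that ``Case (a)'' and ``Case (b)'' are mutually exclusive and exhaust all possibilities by (\ref{eq-Scommute}), so reading off the traces in each case is both necessary and sufficient. Thus the corollary is purely a traceological reformulation of the dichotomy already established, and the proof is a short two-paragraph argument.
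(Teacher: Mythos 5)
Your argument is correct and is essentially the paper's own: the corollary is stated without proof precisely because it is the trace read-off from the normal form (\ref{61.35}) of Theorem \ref{thm0} in the completely reducible case and (\ref{Mono-211})--(\ref{Mono-212}) of Theorem \ref{thm0-1} in the not completely reducible case, combined with conjugation-invariance of the trace and the exhaustiveness of the dichotomy. One minor imprecision: since $r,s$ may be non-real, the phrase ``lies strictly inside $(-2,2)$'' should be replaced by the observation that $2\cos 2\pi s=\pm 2$ if and only if $s\in\tfrac{1}{2}\mathbb{Z}$ (and likewise for $r$), which still yields the claimed exclusion because $(r,s)\notin\tfrac{1}{2}\mathbb{Z}^{2}$.
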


\subsection{The monodromy theory for H$(\mathbf{n},B)$}

Now we recall the counterpart of the above monodromy theory for H$(\mathbf{n},B)$ from Part I \cite{CKL1}, the proof of which is simpler due to the absence of singularities $\pm [p]$.
In this section we denote $\tilde{N}=\sum_{k}n_k\geq 1$. By changing variable $z\to z+\frac{\omega_k}{2}$ if necessary, we always assume $n_0\geq 1$.

(i) Any solution of $H({\bf n},B,\tau)$ is meromorphic in $\mathbb{C}$. The corresponding second symmetric product equation
\[\Phi^{\prime \prime \prime }(z;B)-4I_{\mathbf{n}}(z;B,\tau)\Phi^{\prime
}(z;B)-2I_{\mathbf{n}}^{\prime }(z;B,\tau)\Phi(z;B)=0\]
has a unique even elliptic solution $\Phi_{e}(z;B)$ expressed by
\begin{equation}
\Phi_{e}(z;B)=C_{0}(B)  +\sum_{k=0}^{3}\sum
_{j=0}^{n_{k}-1}b_{j}^{(k)}(B)\wp(z+\tfrac{\omega_{k}}{2})^{n_{k}-j}
\label{3rd1}%
\end{equation}
where $C_{0}(B), b_{j}^{(k)}(B)$ are all polynomials in $B$ with $\deg C_{0}>\max_{j,k} \deg b_{j}^{(k)}$ and the leading coefficient of $C_{0}(B)$
being $\frac{1}{2}$. Moreover, $\Phi_{e}(z;B)=y_1(z;B)$ $y_1(-z;B)$, where $y_1(z;B)$ is a common eigenfunction of the monodromy matrices of $H({\bf n},B,\tau)$ and up to a constant, can be written as
\begin{equation}\label{yby}y_1(z;B)=\tilde{y}_{\boldsymbol{a}}(z)
:=\frac{e^{c({\boldsymbol{a}})z}\prod_{i=1}^{\tilde{N}}\sigma(z-a_i)}{\prod_{k=0}^3\sigma(z-\frac{\omega_k}{2})^{n_k}}\end{equation}
with some $\boldsymbol{a}=(a_1,\cdots, a_{\tilde{N}})$ and $c(\boldsymbol{a})\in\mathbb{C}$.
See (\ref{61-381}) for the expression of $c(\boldsymbol{a})$ in the completely reducible case.
By (\ref{yby}) and the transformation law (\ref{518}), it is easy to see that
$y_1(-z;B)=\tilde{y}_{-\boldsymbol{a}}(z)$ up to a sign $(-1)^{n_1+n_2+n_3}$.

(ii) Let $W$ be the Wroskian of $y_1(z;B)$ and $y_1(-z;B)$, then $W^2=Q_{\bf n}(B;\tau)$, where
\[Q_{\bf n}(B;\tau):=\Phi_{e}'(z;B)^2-2\Phi_{e}(z;B)\Phi_{e}^{\prime \prime }(z;B)+4I_{\mathbf{n}%
}(z;B,\tau)\Phi_{e}(z;B)^2\]
is a monic polynomial in $B$ with \emph{odd degree} and independent of $z$.

(iii) The monodromy of H$(\mathbf{n}, B, \tau)$ is completely reducible if and only if $y_1(z;B)=\tilde{y}_{\boldsymbol{a}}(z)$ and $y_1(-z;B)=\tilde{y}_{-\boldsymbol{a}}(z)$ are linearly independent, which is also equivalent to
\begin{equation}\label{a-0a}\{[a_1],\cdots,[a_{\tilde{N}}]\}\cap \{-[a_1],\cdots,-[a_{\tilde{N}}]\}=\emptyset.
\end{equation}
In this case, since $a_j\neq 0$ in $E_{\tau}$ for all $j$ and $n_0\not =0$, we have
\begin{equation}
c(\boldsymbol{a})=\sum_{i=1}^{\tilde{N}}\zeta(a_{i})-\sum_{k=1}^{3}\frac{n_{k}\eta
_{k}}{2}, \label{61-381}%
\end{equation}
which follows by inserting (\ref{yby}) into H$(\mathbf{n},B,\tau)$ and computing the leading terms at the singularity $0$.
Besides, the $(r,s)$ defined by
\begin{equation}
\left \{
\begin{array}
[c]{l}%
\sum_{i=1}^{\tilde{N}}a_{i}-\sum_{k=1}^{3}\frac{n_{k}\omega_{k}}{2}=r+s\tau \\
\sum_{i=1}^{\tilde{N}}\zeta(a_{i})  -\sum_{k=1}^{3}\frac{n_{k}\eta_{k}
}{2}=r\eta_{1}+s\eta_{2}
\end{array}
\right. \label{rs2}%
\end{equation}
satisfies $(r,s)\notin \frac{1}{2}\mathbb{Z}^2$. Furthermore, with respect to $\tilde{y}_{\boldsymbol{a}}(z)$ and $\tilde{y}_{-\boldsymbol{a}}(z)$,
\begin{equation}
N_1=\rho(\ell_{1})=%
\begin{pmatrix}
e^{-2\pi is} & 0\\
0 & e^{2\pi is}%
\end{pmatrix}
,\text{ }N_2=\rho(\ell_{2})=%
\begin{pmatrix}
e^{2\pi ir} & 0\\
0 & e^{-2\pi ir}%
\end{pmatrix}.
\label{61.3512}%
\end{equation}

(iv) For the not completely reducible case, Theorem \ref{thm0-1} and so Corollary \ref{coro4} also hold for H$(\mathbf{n}, B, \tau)$.

\section{GLE and Painlev\'{e} VI equation}

\label{GLE-PVI}

In order to prove Theorem \ref{thm1},
we need to apply the deep connection \cite{Chen-Kuo-Lin} between GLE and Painlev\'{e} VI
equation.
The well-known Painlev\'{e} VI equation with four free parameters
$(\alpha,\beta,\gamma,\delta)$ (denoted by PVI$(\alpha,\beta,\gamma,\delta)$) is written
as
{\allowdisplaybreaks
\begin{align}
\frac{d^{2}\lambda}{dt^{2}}=  &  \frac{1}{2}\left(  \frac{1}{\lambda}+\frac
{1}{\lambda-1}+\frac{1}{\lambda-t}\right)  \left(  \frac{d\lambda}{dt}\right)
^{2}-\left(  \frac{1}{t}+\frac{1}{t-1}+\frac{1}{\lambda-t}\right)
\frac{d\lambda}{dt}\nonumber \\
&  +\frac{\lambda(\lambda-1)(\lambda-t)}{t^{2}(t-1)^{2}}\left[  \alpha
+\beta \frac{t}{\lambda^{2}}+\gamma \frac{t-1}{(\lambda-1)^{2}}+\delta
\frac{t(t-1)}{(\lambda-t)^{2}}\right]  . \label{46}%
\end{align}
}%
Due to its connection with many different disciplines in mathematics and
physics, PVI has been extensively studied in the past several
decades. We refer the readers to the text \cite{GP} for a detailed introduction of PVI.

One of the fundamental properties for PVI is the so-called
\emph{Painlev\'{e} property}, which says that any solution $\lambda(t)$ of
PVI has neither movable branch points nor movable essential
singularities; in other words, for any $t_{0}\in \mathbb{C}\backslash \{0,1\}$,
either $\lambda(t)$ is holomorphic at $t_{0}$ or $\lambda(t)$ has a pole at $t_{0}$. Therefore, it is reasonable to lift PVI to
the universal covering space $\mathbb{H}=\{ \tau|\operatorname{Im}\tau>0\}$ of
$\mathbb{C}\backslash \{0,1\}$ by the following transformation:%
\begin{equation}
t=\frac{e_{3}(\tau)-e_{1}(\tau)}{e_{2}(\tau)-e_{1}(\tau)},\text{ \ }%
\lambda(t)=\frac{\wp(p(\tau)|\tau)-e_{1}(\tau)}{e_{2}(\tau)-e_{1}(\tau)}.
\label{II-130}%
\end{equation}
Then it is known (cf. \cite{Babich-Bordag,Y.Manin}) that $\lambda(t)$ solves PVI if and only if $p(\tau)$ satisfies the
\emph{elliptic form} (\ref{124})
with parameters given by%
\begin{equation}
\left(  \alpha_{0},\alpha_{1},\alpha_{2},\alpha_{3}\right)  =\left(
\alpha,-\beta,\gamma,\tfrac{1}{2}-\delta \right)  . \label{126-0}%
\end{equation}
The Painlev\'{e} property implies that
function $\wp(p(\tau)|\tau)$ is a single-valued meromorphic function in
$\mathbb{H}$. This is an advantage of making the transformation (\ref{II-130}).

\begin{remark}
\label{identify}Clearly for any $m_{1},m_{2}\in \mathbb{Z}$, $\pm p(\tau
)+m_{1}+m_{2}\tau$ is also a solution of the elliptic form (\ref{124}). Since
they all give the same $\lambda(t)$ via (\ref{II-130}), we
always identify all these $\pm p(\tau)+m_{1}+m_{2}\tau$ with the
same one $p(\tau)$.
\end{remark}

Another important feature of PVI is that it is closely related to
the isomonodromy theory of a second order Fuchsian ODE on $\mathbb{CP}^{1}$,
which has five regular singular points $\{0,1,t,\lambda(t),\infty\}$. Among
them, $\lambda(t)$ (as a solution of PVI) is an apparent
singularity. In fact, PVI (\ref{46}) is equivalent to the following
Hamiltonian system%
\begin{equation}
\frac{d\lambda (t)  }{dt}=\frac{\partial K}{\partial \mu},\text{
\ }\frac{d\mu(t)  }{dt}=-\frac{\partial K}{\partial \lambda},
\label{aa}%
\end{equation}
where $K=K(\lambda,\mu,t)$ is given by%
\begin{equation}
K=\frac{1}{t(t-1)}\left \{
\begin{array}
[c]{l}%
\lambda(\lambda-1)(\lambda-t)\mu^{2}+\theta_{0}(\theta_{0}+\theta_{4}%
)(\lambda-t)\\
-\left[
\begin{array}
[c]{l}%
\theta_{1}(\lambda-1)(\lambda-t)+\theta_{2}\lambda(\lambda-t)\\
+(\theta_{3}-1)\lambda(\lambda-1)
\end{array}
\right]  \mu
\end{array}
\right \}  , \label{98}%
\end{equation}
and the relation of parameters is given by%
\begin{equation}
(  \alpha,\beta,\gamma,\delta )  =\left(  \tfrac{1}{2}\theta_{4}%
^{2},\,-\tfrac{1}{2}\theta_{1}^{2},\, \tfrac{1}{2}\theta_{2}^{2},\, \tfrac
{1}{2}\left(  1-\theta_{3}^{2}\right)  \right)  , \label{46-2}%
\end{equation}
\begin{equation}
2\theta_{0}+\theta_{1}+\theta_{2}+\theta_{3}+\theta_{4}=1. \label{46-3}%
\end{equation}
For the Hamiltonian system (\ref{aa}), we consider a second order Fuchsian
differential equation on $\mathbb{CP}^{1}$ as follows:%
\begin{equation}
\frac{d^{2}f}{dx^{2}}+p_{1}(x)\frac{df}{dx}+p_{2}(x)f=0, \label{90}%
\end{equation}
which has five regular singular points at $\{0,1,t,\lambda,\infty \}$ with the
Riemann scheme%
\begin{equation}
\left(
\begin{array}
[c]{ccccc}%
0 & 1 & t & \lambda & \infty \\
0 & 0 & 0 & 0 & \theta_{0}\\
\theta_{1} & \theta_{2} & \theta_{3} & 2 & \theta_{0}+\theta_{4}%
\end{array}
\right)  , \label{91}%
\end{equation}
and $\lambda$ is an apparent singularity. Under these conditions, we have
\begin{equation}
p_{1}(x)=\frac{1-\theta_{3}}{x-t}+\frac{1-\theta_{1}}{x}+\frac{1-\theta_{2}%
}{x-1}-\frac{1}{x-\lambda}, \label{96}%
\end{equation}%
\begin{equation}
p_{2}(x)=\frac{\theta_{0}\left(  \theta_{0}+\theta_{4}\right)  }{x(x-1)}%
-\frac{t(t-1)K}{x(x-1)(x-t)}+\frac{\lambda(\lambda-1)\mu}{x(x-1)(x-\lambda)},
\label{97}%
\end{equation}
where $K=K(\lambda,\mu,t)$ is given by (\ref{98}); see e.g. \cite{GP}. The
following result was proved in \cite{Fuchs, Okamoto2}: \emph{Suppose that
}$\theta_{1},\theta_{2},\theta_{3},\theta_{4}\notin \mathbb{Z}$ \emph{and
}$\lambda$\emph{\ is an apparent singularity of (\ref{90}). Then (\ref{90}) is
monodromy preserving as }$t$\emph{\ deforms if and only if }$\left(
\lambda(t),\mu(t)\right)  $\emph{\ satisfies the Hamiltonian system
(\ref{aa}). In particular, }$\lambda(t)$ \emph{is a solution of PVI
(\ref{46}).}

On the other hand, there are works studying the isomonodromic deformation on elliptic curves and its Hamiltonian structure; see e.g. \cite{Kawai} and references therein.
Recently, we \cite{Chen-Kuo-Lin} developed an analogous isomonodromy theory
for the elliptic form (\ref{124}). First we proved that the elliptic form
(\ref{124}) is equivalent to the new Hamiltonian system (\ref{142-0}).
Then we proved that this Hamiltonian system governs the isomonodromic
deformation of GLE$(\mathbf{n},$ $p(\tau),$ $A(\tau),\tau)$.\medskip

\noindent \textbf{Theorem 3.A.} \cite{Chen-Kuo-Lin} \emph{GLE$(\mathbf{n}
,p(\tau),A(\tau),\tau)$ with $p(\tau)$ being
an apparent singularity is monodromy preserving as $\tau$ deforms if
and only if $(p(\tau),A(\tau))$ satisfies the Hamiltonian system
(\ref{142-0}). In particular, $p(\tau)$ is a solution of the elliptic
form (\ref{124}) with parameter (\ref{125})}.\medskip

Remark that Theorem 3.A holds for
any $n_{k}\in \mathbb{C}\setminus(\frac{1}{2}+\mathbb{Z})$ (i.e. non-resonant
condition), but we only consider $n_{k}\in \mathbb{Z}_{\geq0}$ in this
paper.

Given any solution $p(\tau)$ of the elliptic form (\ref{124}) with parameter
(\ref{125}), we define $A(\tau)$ by the first equation of (\ref{142-0}). Then
for any $\tau$ such that $p(\tau)\not \in E_{\tau}[2]$, $A(\tau)$ is finite
and so GLE$(\mathbf{n},p(\tau),A(\tau),\tau)$ is well-defined, which is called
\emph{the associated GLE} of $p(\tau)$ in this paper.

In view of Theorem 3.A and the monodromy theory of GLE discussed in Section
2, we give the following definition for convenience.

\begin{definition}\label{definition1}
A solution $p(\tau)$ of the elliptic form (\ref{124}) with parameter
(\ref{125}) is called a completely reducible solution if the monodromy of the
associated GLE$(\mathbf{n}$, $p(\tau),A(\tau),\tau)$ is completely
reducible; otherwise, $p(\tau)$ is called a not completely reducible solution.
\end{definition}

A natural problem is \emph{how to classify (not) completely reducible
solutions $p(\tau)$ in terms of the global monodromy data of the associated GLE$(\mathbf{n},p(\tau),A(\tau),\tau)$}. This is crucial for
us to prove Theorem \ref{thm1}. In Sections \ref{Hitchin-case}-\ref{General-case-Ok}, we answer this
question for the special case $\mathbf{n}=\boldsymbol{0}$, i.e. $n_{k}=0$ for
all $k$ and the general case $\mathbf{n}\not =\boldsymbol{0}$, respectively.

\section{The special case $\mathbf{n}=\boldsymbol{0}$}
\label{Hitchin-case}

Note from (\ref{125}) that $\alpha_{k}=\frac{1}{8}$ for all $k$ if $\mathbf{n}=\boldsymbol{0}$.
This section is devoted to the classification of all solutions of EPVI$(\frac{1}{8},\frac{1}{8},\frac{1}{8},\frac{1}{8})$
\begin{equation}
\frac{d^{2}p(\tau)}{d\tau^{2}}=\frac{-1}{32\pi^{2}}\sum_{k=0}^{3}\wp^{\prime
}\left(  \left.  p(\tau)+\frac{\omega_{k}}{2}\right \vert
\tau \right)  ,\label{hit}%
\end{equation}
or equivalently PVI$(\frac{1}{8},\frac{-1}{8},\frac{1}{8},\frac{3}{8})$, in terms of the global monodromy data of the associated GLE$(\mathbf{0},p(\tau),A(\tau),\tau)$. PVI$(\frac{1}{8},\frac{-1}{8},\frac{1}{8},\frac{3}{8})$ was first studied by Hitchin \cite{Hit1} and later by Takemura \cite{Takemura}. Therefore, part of the results in this section do overlap with the existing literature.
However, there are a number of issues which we were unable to locate
satisfactory in the literature. Here we attempt to provide a self-contained
account of solutions of PVI$(\frac{1}{8},\frac{-1}{8},\frac{1}{8},\frac{3}{8})$ for later usage in Section \ref{General-case-Ok}.

First we recall Hitchin's famous formula.
For any $(r,s)  \in \mathbb{C}^{2}\backslash \frac
{1}{2}\mathbb{Z}^{2}$, let $p_{r,s}^{\mathbf{0}}(\tau
)$ be defined by
\begin{equation}
\wp(p_{r,s}^{\mathbf{0}}(\tau)|\tau):=\wp(r+s\tau|\tau)+\frac{\wp^{\prime
}(r+s\tau|\tau)}{2(\zeta(r+s\tau|\tau)-r\eta_{1}(\tau)-s\eta_{2}%
(\tau))  }. \label{513-1}%
\end{equation}
In \cite{Hit1} Hitchin proved the following remarkable result for PVI$(\frac{1}{8},\frac{-1}{8},\frac{1}{8},\frac{3}{8})$.

\medskip

\noindent \textbf{Theorem 4.A. \cite{Hit1}} \textit{For any $(
r,s)  \in \mathbb{C}^{2}\backslash \frac{1}{2}\mathbb{Z}^{2}$\textit{,
}$p_{r,s}^{\mathbf{0}}(\tau)$ given by (\ref{513-1}) is a solution to
EPVI$(\frac{1}{8},\frac{1}{8},\frac{1}{8},\frac{1}{8})$;
or equivalently, $\lambda_{r,s}^{\mathbf{0}}(t):=
\frac{\wp(p_{r,s}^{\mathbf{0}}(\tau)|\tau)-e_{1}(  \tau)  }%
{e_{2}(\tau)-e_{1}(\tau)}$ via (\ref{513-1}) is a solution to PVI$(\frac{1}{8},$ $\frac{-1}{8},\frac{1}{8},\frac{3}{8})$}.\medskip

The following result shows that $p_{r,s}^{\mathbf{0}}(\tau)$ represents the completely reducible solutions in the sense of Definition \ref{definition1}.

\begin{theorem}
\label{thm5}Suppose $p^{\mathbf{0}}(\tau)$ is a solution of
(\ref{hit}). Then

\begin{itemize}
\item[(i)] $p^{\mathbf{0}}(\tau)$ is completely reducible if and
only if there is a complex pair $(r,s)\in \mathbb{C}^{2}\backslash \frac{1}%
{2}\mathbb{Z}^{2}$ such that $p^{\mathbf{0}}(\tau)=p_{r,s}%
^{\mathbf{0}}(\tau)$ given by (\ref{513-1}). In this case, the
monodromy of the associated GLE$(\mathbf{0},p^{\mathbf{0}}(\tau),A(\tau),\tau)$ satisfies (\ref{61.35}).

\item[(ii)]
$\wp(p_{r_{1},s_{1}}^{\mathbf{0}}(\tau)|\tau)\equiv \wp(p_{r_{2},s_{2}%
}^{\mathbf{0}}(\tau)|\tau)\text{ }\Longleftrightarrow \text{ }(
r_{1},s_{1})  \equiv \pm (r_{2},s_{2})  \operatorname{mod}%
\mathbb{Z}^{2}.$
\end{itemize}
\end{theorem}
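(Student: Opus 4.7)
The strategy is to extract both directions of (i) from Theorem~\ref{thm0} applied to $\mathbf{n}=\boldsymbol{0}$ (so $N=1$), and then derive (ii) from the uniqueness of $\pm[a]$ stated in \eqref{aunique}.

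For the forward implication of (i), assume $p^{\mathbf{0}}(\tau)$ is completely reducible. By Theorem~\ref{thm0} there exists $(r,s)\in\mathbb{C}^{2}\setminus\tfrac{1}{2}\mathbb{Z}^{2}$ such that $a=r+s\tau$ and $c(a)=r\eta_{1}+s\eta_{2}$ by \eqref{61-37}; at any $\tau$ with $[a]\neq\pm[p]$ one additionally has $c(a)=\tfrac{1}{2}(\zeta(a+p)+\zeta(a-p))$ by \eqref{61-38}. Equating the two expressions for $c(a)$ and applying the classical addition identity
\[
\zeta(a+p)+\zeta(a-p)=2\zeta(a)+\frac{\wp'(a)}{\wp(a)-\wp(p)}
\]
is exactly Hitchin's formula \eqref{513-1}, so $\wp(p^{\mathbf{0}}(\tau))=\wp(p_{r,s}^{\mathbf{0}}(\tau))$ on a dense open set of $\tau$, and hence everywhere by the Painlev\'e property. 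Since the monodromy exponents $(r,s)$ are $\tau$-independent and $\pm p$ are identified as the same solution by Remark~\ref{identify}, we conclude $p^{\mathbf{0}}(\tau)=p_{r,s}^{\mathbf{0}}(\tau)$.

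For the reverse implication, take $(r,s)\notin\tfrac{1}{2}\mathbb{Z}^{2}$ and let $p=p_{r,s}^{\mathbf{0}}(\tau)$, which solves \eqref{hit} by Theorem~4.A. Set $a:=r+s\tau$ and $c:=r\eta_{1}+s\eta_{2}$; Hitchin's formula rewritten through the same $\zeta$-identity gives $c=\tfrac{1}{2}(\zeta(a+p)+\zeta(a-p))$. Form the Hermite--Halphen candidate
\[
y_{a}(z)=\frac{e^{cz}\sigma(z-a)}{\sqrt{\sigma(z-p)\sigma(z+p)}},
\]
and verify by substituting into $y''=I_{\boldsymbol{0}}(z;p,A,\tau)y$ and matching the principal parts at $\pm p$ that $y_{a}$ is a genuine solution for a uniquely determined value $\tilde A(\tau)$. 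The transformation rule \eqref{translaw} then gives pseudo-periodicity of $y_{a}$ and $y_{-a}$ with eigenvalues $(e^{\mp 2\pi is},e^{\pm 2\pi ir})$, independent of $\tau$, and the two are linearly independent because $a\not\equiv -a\pmod{\Lambda_{\tau}}$. Hence GLE$(\boldsymbol{0},p,\tilde A,\tau)$ is completely reducible and its monodromy is preserved as $\tau$ varies; the converse direction of Theorem~3.A then forces $\tilde A(\tau)=A(\tau)$, the Hamiltonian companion of $p_{r,s}^{\mathbf{0}}$. This proves complete reducibility together with the monodromy form \eqref{61.35}.

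Part (ii) follows from (i) combined with \eqref{aunique}. The direction "$\Leftarrow$" is the manifest invariance of \eqref{513-1} under $(r,s)\mapsto -(r,s)$ and $(r,s)\mapsto(r,s)+(m,n)$, using the parity of $\wp,\wp',\zeta$ and the quasi-periodicity \eqref{40-2}. For "$\Rightarrow$", suppose $\wp(p_{r_{1},s_{1}}^{\mathbf{0}}(\tau))\equiv\wp(p_{r_{2},s_{2}}^{\mathbf{0}}(\tau))$. Then $p_{r_{1},s_{1}}^{\mathbf{0}}\equiv\pm p_{r_{2},s_{2}}^{\mathbf{0}}\pmod{\Lambda_{\tau}}$, and by \eqref{GLE-invariant} together with the Hamiltonian uniqueness of $A$ given $p$, both Hitchin solutions produce the same GLE at each $\tau$. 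Lemma~\ref{lem6.3} then yields the same $\Phi_{e}(z)$ up to a constant, and \eqref{aunique} forces $[r_{1}+s_{1}\tau]=\pm[r_{2}+s_{2}\tau]$ in $E_{\tau}$, i.e.\ $(r_{1},s_{1})\equiv\pm(r_{2},s_{2})\pmod{\mathbb{Z}^{2}}$. The main obstacle, as I see it, lies in the reverse direction of (i): first, the identification $\tilde A(\tau)=A(\tau)$ requires the converse direction of Theorem~3.A; and second, one must make sure the derivation of Hitchin's formula survives the special locus $\{[a]=\pm[p]\}$ where \eqref{61-38} degenerates, which is handled by analytic continuation in $\tau$.
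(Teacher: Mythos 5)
Your proposal is correct and follows essentially the same route as the paper: the forward direction of (i) via Theorem \ref{thm0} together with \eqref{61-37}--\eqref{61-38} and the addition formula, the reverse direction by building $y_{a}$ explicitly, checking it solves a GLE with a uniquely determined $\tilde A(\tau)$ whose monodromy is diagonal and $\tau$-independent, and then invoking Theorem 3.A to force $\tilde A=A$. The only (cosmetic) divergence is in (ii), where you extract $(r_{1},s_{1})\equiv\pm(r_{2},s_{2})$ from the uniqueness of $\pm[\boldsymbol{a}]$ in \eqref{aunique} rather than from the monodromy eigenvalues $e^{\pm 2\pi i r},e^{\pm 2\pi i s}$ as the paper does; both are immediate once the two associated GLEs are shown to coincide.
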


\begin{proof}
(i) Take $\tau_{0}\in \mathbb{H}$ such that $p^{\mathbf{0}}(\tau)$
$\not \in E_{\tau}[2]$ in a neighborhood $U$ of $\tau_{0}$. We only need to
prove $p^{\mathbf{0}}(\tau)=p_{r,s}%
^{\mathbf{0}}(\tau)$ in a neighborhood $U$ for some $(r,s)\notin\frac{1}{2}\mathbb{Z}^2$ and then the result follows by analytic
continuation.

First we prove the necessary part. Since $p^{\mathbf{0}}(\tau)$
is completely reducible, the associated GLE$(\mathbf{0},p^{\mathbf{0}}%
(\tau),A(\tau),\tau)$ is well-defined in $U$ and preserves its completely
reducible monodromy for $\tau \in U$. Then by Theorem \ref{thm0} and
(\ref{aunique})-(\ref{aunique11}), there exists $(r,s)\in\mathbb{C}^{2}\backslash \frac
{1}{2}\mathbb{Z}^{2}$ independent of $\tau$ such that
\begin{equation}
y_{a_{1}(\tau)}(  z)  =\frac{e^{c(\tau)z}\sigma(  z-a_{1}%
(\tau))  }{\sqrt{\sigma(z-p^{\mathbf{0}}(\tau))\sigma(z+p^{\mathbf{0}%
}(\tau))}} \label{yac}%
\end{equation}
is a solution to GLE$(\mathbf{0},p^{\mathbf{0}}(\tau),A(\tau),\tau)$, where%
\begin{equation}
a_{1}(\tau)=r+s\tau, \label{61-370}%
\end{equation}%
\begin{align}
c(\tau)&=r\eta_{1}(\tau)+s\eta_{2}(\tau)\nonumber\\
&=\tfrac{1}{2}\left[  \zeta(a_{1}%
(\tau)+p^{\mathbf{0}}(\tau))+\zeta(a_{1}(\tau)-p^{\mathbf{0}}(\tau))\right]  .
\label{kkkll}%
\end{align}
Here $[a_1(\tau)]\neq \pm [p^{\mathbf{0}}(\tau)]$ because the local exponents are $\frac{-1}{2}, \frac{3}{2}$ at $\pm p^{\mathbf{0}}(\tau)$.  Applying the addition formula%
\begin{equation}
\zeta(u+v)+\zeta(u-v)-2\zeta(u)=\frac{\wp^{\prime}(u)}{\wp(u)-\wp(v)},
\label{81}%
\end{equation}
it is easy to see that the second equality in (\ref{kkkll}) is equivalent to
\begin{equation}
\wp \left(  p^{\mathbf{0}}(\tau)|\tau \right)  =\wp(r+s\tau|\tau)+\frac
{\wp^{\prime}(r+s\tau|\tau)}{2(\zeta \left(  r+s\tau|\tau \right)  -r\eta
_{1}(\tau)-s\eta_{2}(\tau))}, \label{a}%
\end{equation}
i.e. $\wp(  p^{\mathbf{0}}(\tau)|\tau)  =\wp(  p_{r,s}%
^{\mathbf{0}}(\tau)|\tau)  $ for $\tau \in U$. This proves $p^{\mathbf{0}%
}(\tau)=p_{r,s}^{\mathbf{0}}(\tau)$ by Remark \ref{identify}.

Next we prove the sufficient part. Since $p^{\mathbf{0}}(\tau)$ $=p_{r,s}^{\mathbf{0}}(\tau)$, the
above argument shows the validity of the second equality of (\ref{kkkll}) by
defining $a_{1}(\tau)$ $=r+s\tau$. Since $(r,s)\not \in \frac{1}{2}%
\mathbb{Z}^{2}$, we may assume $a_{1}(\tau)$ $\not \in $ $E_{\tau}[2]$ and
hence $a_{1}(\tau)\not \equiv \pm p^{\mathbf{0}}(\tau)\operatorname{mod}\Lambda_{\tau}$
for $\tau \in U$. Then we define $c(\tau)$ by (\ref{kkkll}) and $y_{a_{1}%
(\tau)}(z)$ by (\ref{yac}) in $U$. Consequently, a direct computation shows that
$y_{a_{1}(\tau)}(z)$ is a solution to GLE$(\mathbf{0},$ $p^{\mathbf{0}}%
(\tau),$ $\tilde{A}(\tau),\tau)$ with%
\begin{equation}
\tilde{A}(\tau):=\tfrac{1}{2}\left[  \zeta(a_{1}(\tau)+p^{\mathbf{0}}%
(\tau))-\zeta(a_{1}(\tau)-p^{\mathbf{0}}(\tau))-\zeta(2p^{\mathbf{0}}%
(\tau))\right]  . \label{Aap}%
\end{equation}
Indeed, since
\[\frac{y_{a_{1}}'(z)}{y_{a_{1}}(z)}=c(\tau)+\zeta(z-a_1)-\tfrac{1}{2}[\zeta(z+p^{\mathbf{0}})
+\zeta(z-p^{\mathbf{0}})],\]
\[\left(\frac{y_{a_{1}}'(z)}{y_{a_{1}}(z)}\right)'=-\wp(z-a_1)+\tfrac{1}{2}
[\wp(z+p^{\mathbf{0}})
+\wp(z-p^{\mathbf{0}})],\]
are all elliptic functions, we have
\begin{align*}
\frac{y_{a_{1}}''(z)}{y_{a_{1}}(z)}&=\left(\frac{y_{a_{1}}'(z)}{y_{a_{1}}(z)}\right)'+
\left(\frac{y_{a_{1}}'(z)}{y_{a_{1}}(z)}\right)^2\\
&=\tfrac{3}{4}
[\wp(z+p^{\mathbf{0}})
+\wp(z-p^{\mathbf{0}})]+\tilde{A}[\zeta(z+p^{\mathbf{0}})
-\zeta(z-p^{\mathbf{0}})]+\tilde{B},
\end{align*}
with some $\tilde{B}\in\mathbb{C}$ and
$\tilde{A}=-c(\tau)+\zeta(p^{\mathbf{0}}+a_1)-\tfrac{1}{2}\zeta(2p^{\mathbf{0}})$,
i.e. (\ref{Aap}) holds by using the second equality of (\ref{kkkll}).

By (\ref{kkkll}) and $a_{1}(\tau)$ $=r+s\tau$, the same
argument as Theorem \ref{thm0} implies that (\ref{61.35}) holds with respect
to $y_{a_{1}(\tau)}(z)$ and $y_{-a_{1}(\tau)}(z)$, i.e. the monodromy of
GLE$(\mathbf{0},$ $p^{\mathbf{0}}(\tau),$ $\tilde{A}(\tau),\tau)$ is
completely reducible and preserves for $\tau \in U$. Then Theorem 3.A implies that $(p^{\mathbf{0}}%
(\tau),$ $\tilde{A}(\tau))$ satisfies the Hamiltonian system (\ref{142-0}),
namely $\tilde{A}(\tau)=A(\tau)$ and so the monodromy of the associated
GLE$(\mathbf{0},$ $p^{\mathbf{0}}(\tau),$ $A(\tau),\tau)$ of $p^{\mathbf{0}}(\tau)$ is completely
reducible. This proves that $p^{\mathbf{0}}(\tau)$ is a completely reducible solution.

(ii) The sufficient part is trivial so we prove the necessary part. Suppose
$\wp(p_{r_{1},s_{1}}^{\mathbf{0}}(\tau)|\tau)$ $\equiv$ $\wp(p_{r_{2},s_{2}%
}^{\mathbf{0}}(\tau)|\tau)$. Take $\tau_{0}\in \mathbb{H}$ such that
$p_{r_{i},s_{i}}^{\mathbf{0}}(\tau)$ $\not \in E_{\tau}[2]$, $i=1,2,$ in a
neighborhood $U$ of $\tau_{0}$. Then $p_{r_{1},s_{1}}^{\mathbf{0}}(\tau)$
$=\pm p_{r_{2},s_{2}}^{\mathbf{0}}(\tau)$ $+m$ $+n\tau$ for $\tau \in U$. Let
$A_{i}(\tau)$ be defned by the first equation of the Hamiltonian system
(\ref{142-0}), then $A_{1}(\tau)=\pm A_{2}(\tau)$. Together with
(\ref{GLE-invariant}), we conclude that these two associated GLE($\mathbf{0},$
$p_{r_{i},s_{i}}^{\mathbf{0}}(\tau),$ $A_{i}(\tau),\tau$) must be the same.
Consequently, it follows from the assertion (i) that%
\[
e^{2\pi is_{1}}=e^{\pm2\pi is_{2}}\text{ and }e^{2\pi ir_{1}}=e^{\pm2\pi
ir_{2}},
\]
which is precisely $\left(  r_{1},s_{1}\right)  \equiv \pm \left(  r_{2}%
,s_{2}\right)  \operatorname{mod}\mathbb{Z}^{2}$. The proof is complete.
\end{proof}

Next we study the not completely reducible solutions of
EPVI$(\frac{1}{8},\frac{1}{8},\frac{1}{8},\frac{1}{8})$. Recall (\ref{98}) that
the corresponding Hamiltonian $K=K(\lambda,\mu,t)$ is given by%
\begin{equation}
K=\frac{1}{t(t-1)}\left \{  \lambda(\lambda-1)(\lambda-t)\mu^{2}-\tfrac{1}%
{2}(\lambda^{2}-2t\lambda+t)\mu \right \}  . \label{98-0}%
\end{equation}
In general, solutions of PVI($\frac{1}{8},\frac{-1}{8},\frac{1}{8},\frac{3}{8}$) might also come from Riccati equations. It is easy to see from
(\ref{98-0}) that the Hamiltonian system (\ref{aa}) has four families of
solutions $(\lambda(t),\mu(t))$, where $\lambda(t)$ satisfies four different
Riccati equations as follows:%
\begin{equation}
\frac{d\lambda}{dt}=-\frac{1}{2t(t-1)}(\lambda^{2}-2t\lambda+t),\quad
\mu \equiv0; \label{1000}%
\end{equation}%
\begin{equation}
\frac{d\lambda}{dt}=\frac{1}{2t(t-1)}(\lambda^{2}-2\lambda+t),\quad
\mu \equiv \frac{1}{2\lambda}; \label{1001}%
\end{equation}%
\begin{equation}
\frac{d\lambda}{dt}=\frac{1}{2t(t-1)}(\lambda^{2}-t),\quad\mu \equiv \frac
{1}{2(\lambda-1)}; \label{1002}%
\end{equation}%
\begin{equation}
\frac{d\lambda}{dt}=\frac{1}{2t(t-1)}(\lambda^{2}+2(t-1)\lambda-t),\quad
\mu \equiv \frac{1}{2(\lambda-t)}. \label{1003}%
\end{equation}

\begin{theorem}
\label{thm3.3}Suppose $p(\tau)$ is a solution of
EPVI$(\frac{1}{8},\frac{1}{8},\frac{1}{8},\frac{1}{8})$. Then $p(\tau)$ is not completely reducible if and only if the
corresponding solution $\lambda(t)$ (via (\ref{II-130})) of PVI$(\frac{1}{8},\frac{-1}{8},\frac{1}{8},\frac{3}{8})$ solves one of the four
Riccati equations (\ref{1000})-(\ref{1003}).
\end{theorem}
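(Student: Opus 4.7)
The strategy is to apply Theorem \ref{thm0-1} with $\mathbf{n}=\mathbf{0}$ (so $N=1$) in order to reduce the not completely reducible case to four discrete choices of the Hermite-Halphen parameter $a_{1}$, and then to translate the resulting algebraic relation between $A$ and $p$ into a first-order ODE on $\lambda(t)$ via the Hamiltonian system (\ref{142-0}) and the change of variables (\ref{II-130}). Indeed, Theorem \ref{thm0-1} tells us that the common eigen-solution parameter must satisfy $a_{1}(\tau)=r+s\tau$ with $(r,s)\in\tfrac{1}{2}\mathbb{Z}^{2}$, giving four cases modulo $\mathbb{Z}^{2}$: $(r,s)\equiv(0,0)$, $(\tfrac12,0)$, $(0,\tfrac12)$, $(\tfrac12,\tfrac12)$, corresponding to $a_{1}\equiv 0,\omega_{1}/2,\omega_{2}/2,\omega_{3}/2$ modulo $\Lambda_{\tau}$. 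The claim is that these four choices are in one-to-one correspondence with the four Riccati equations (\ref{1000})--(\ref{1003}).

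For the necessary direction, in each of the four cases the formula $A(\tau)=\tfrac12[\zeta(a_{1}+p)-\zeta(a_{1}-p)-\zeta(2p)]$ derived in the proof of Theorem \ref{thm5}(i) (which only uses that $y_{a_{1}}$ solves GLE, and so applies whenever $[a_{1}]\neq\pm[p]$) collapses into an explicit expression in $\wp(p)$ and $\wp'(p)$; for example, when $a_{1}=0$ the duplication identity $\zeta(2p)-2\zeta(p)=\wp''(p)/(2\wp'(p))$ gives $A=-\wp''(p)/(4\wp'(p))$. Substituting this algebraic constraint into the first Hamilton equation $\dot p=\tfrac{-i}{4\pi}(2A-\zeta(2p)+2p\eta_{1})$ and converting to the PVI coordinates via (\ref{II-130}) produces the Riccati equation (\ref{1000}); the three remaining choices $a_{1}=\omega_{k}/2$ give (\ref{1001})--(\ref{1003}) either by an analogous computation or by exploiting the substitution $z\mapsto z+\omega_{k}/2$, which permutes the four half-period singularities of GLE and realizes a B\"acklund-type action on PVI. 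Conversely, given a Riccati solution $\lambda(t)$, the corresponding $(\lambda(t),\mu(t))$ of the PVI Hamiltonian (\ref{98-0}) with $\mu$ equal to one of $0$, $1/(2\lambda)$, $1/(2(\lambda-1))$, $1/(2(\lambda-t))$ forces the associated $(p(\tau),A(\tau))$ onto the same algebraic locus $A=A(p)$, so that $y_{a_{1}}(z)$ with the matching $a_{1}\in\{0,\omega_{1}/2,\omega_{2}/2,\omega_{3}/2\}$ is a common eigen-solution of the resulting GLE$(\mathbf{0},p(\tau),A(\tau),\tau)$. Since $a_{1}\equiv -a_{1}\pmod{\Lambda_{\tau}}$, we have $y_{a_{1}}(z)=y_{-a_{1}}(z)$ up to a nonzero constant, and hence the monodromy is not completely reducible by the discussion preceding Theorem \ref{thm0-1}.

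The main obstacle is the explicit verification that the $(p,A)$ Hamiltonian system, once restricted to the algebraic locus $A=-\wp''(p)/(4\wp'(p))$ (and its three cousins), reduces exactly to (\ref{1000})--(\ref{1003}). This is a concrete computation that blends $\tau$-derivatives of quasi-periodic Weierstrass data (using identities such as $\partial_{\tau}\wp=-\tfrac{1}{4\pi i}(\zeta\wp'+2\wp^{2}+\tfrac{g_{2}}{6})$ together with analogous identities for $\zeta,\eta_{k},e_{k}$) with the cubic identity $\wp'(p)^{2}=4\prod_{j=1}^{3}(\wp(p)-e_{j})=4(e_{2}-e_{1})^{3}\lambda(\lambda-1)(\lambda-t)$ that carries $\wp'(p)$ over into the PVI variables. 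By the symmetry $z\mapsto z+\omega_{k}/2$, the verification of all four cases reduces to the single case $a_{1}=0$.
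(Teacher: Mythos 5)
Your reduction of the not completely reducible case to the four choices $a_{1}\equiv\frac{\omega_k}{2}\ (\mathrm{mod}\ \Lambda_\tau)$ via Theorem \ref{thm0-1} (equivalently, via the linear dependence of $y_{a_1}$ and $y_{-a_1}$ in Theorem \ref{thm0}) is exactly the first half of the paper's proof, including the identification of the algebraic locus $A=-\wp''(p)/(4\wp'(p))$ for $a_1=0$ and its three analogues. Where you diverge is in how the constraint $a_1\in E_\tau[2]$ gets translated into the Riccati equations. You propose to substitute $A=A(p)$ into the first Hamilton equation of (\ref{142-0}) and convert the resulting first-order ODE for $p(\tau)$ into $(t,\lambda)$ coordinates by differentiating quasi-periodic Weierstrass data in $\tau$ --- and you candidly flag this verification as the ``main obstacle'' without carrying it out. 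The paper avoids that computation entirely: it invokes the dictionary (\ref{II-106}) between the PVI Hamiltonian variable $\mu$ and the GLE parameter $A$ (already established in \cite{Chen-Kuo-Lin}), combines it with the addition formula to get the purely algebraic identity $\mu(t)=\frac{e_2-e_1}{2[\wp(p)-\wp(a_1)]}$ valid for \emph{every} solution, and then simply observes that $a_1\equiv\frac{\omega_k}{2}$ holds if and only if $\mu$ equals $0$, $\frac{1}{2\lambda}$, $\frac{1}{2(\lambda-1)}$, or $\frac{1}{2(\lambda-t)}$ --- which are precisely the invariant loci of the Hamiltonian system (\ref{aa}) defining the four Riccati equations (\ref{1000})--(\ref{1003}). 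No $\tau$-differentiation is needed, and the ``if and only if'' comes for free as a chain of equivalences, whereas in your version the converse also requires either the same dictionary or running the heavy computation backwards.

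So the one substantive issue is that your proposal is incomplete at exactly the step where the real work happens: the claim that the restricted Hamiltonian system for $(p,A)$ reduces to (\ref{1000})--(\ref{1003}) is asserted, not proved, and the route you sketch for proving it (direct $\tau$-derivative manipulation of $\wp$, $\zeta$, $\eta_k$, $e_k$) is substantially harder than necessary. If you instead quote the relation (\ref{II-106}) --- which you already have available, since it is the same formula (\ref{bb}) that links $\mu$ to $A$ --- the entire translation collapses to the algebraic identity (\ref{135}) and the four cases of (\ref{kakak}); I would recommend reorganizing the second half of your argument around that identity rather than attempting the ODE computation.
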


\begin{proof}
Let $p(\tau)$ be a solution of the elliptic form (\ref{hit}). We can take
$\tau_{0}\in \mathbb{H}$ such that
\begin{equation}
\lbrack p(\tau)]\not \in E_{\tau}[2]\text{ and }A(\tau)\text{ is finite in a
neighborhood }U\text{ of }\tau_{0}, \label{1113}%
\end{equation}
namely the associated GLE$(\mathbf{0},p(\tau),A(\tau),\tau)$ is
well-defined and preserves the monodromy for $\tau \in U$. Recalling (\ref{Aap}), we let $\pm a_{1}(\tau)$ be
defined by
\begin{equation}
A(\tau)=\frac{1}{2}[\zeta(a_{1}(\tau)+p(\tau))-\zeta(a_{1}(\tau)-p(\tau
))-\zeta(2p(\tau))],\tau \in U. \label{135-2}
\end{equation}
Then (\ref{1113}) gives
\begin{equation}
\lbrack a_{1}(\tau)]\not =\pm \lbrack p(\tau)],\text{ }\tau \in U. \label{1112}%
\end{equation}
Consequently, the same argument as that in the proof of Theorem \ref{thm5}-(i)
shows that%
\[
y_{\pm a_{1}(\tau)}(z)  =\frac{e^{\pm c(\tau)z}\sigma (  z\mp
a_{1}(\tau))  }{\sqrt{\sigma(z-p(\tau))\sigma(z+p(\tau))}}%
\]
with%
\[
c(\tau)=\frac{1}{2}\left[  \zeta(a_{1}(\tau)+p(\tau))+\zeta(a_{1}(\tau
)-p(\tau))\right]
\]
are both solutions of GLE$(\mathbf{0},p(\tau),A(\tau),\tau)$. By Theorem \ref{thm0}, the
monodromy is not completely reducible if and only if $y_{a_{1}(\tau)}(
z)$ and $y_{-a_{1}(\tau)}(z)$ are linearly dependent,
which is equivalent to $a_{1}(\tau)\equiv-a_{1}(\tau)\operatorname{mod}%
\Lambda_{\tau}$, i.e.
\begin{equation}
\left[  a_{1}(\tau)\right]  =[\tfrac{\omega_{k}}{2}]\text{ for }\tau \in U\text{
and some }k\in \{0,1,2,3\}. \label{1115}%
\end{equation}

On the other hand, by the
addition formula (\ref{81}) and $\frac{\wp^{\prime \prime}(p)}{2\wp^{\prime
}(p)}=$ $\zeta(2p)-2\zeta(p)$, we can rewrite (\ref{135-2}) as
\begin{equation}
A(\tau)=\frac{\wp^{\prime}(p(\tau))}{2\left[  \wp(p(\tau))-\wp(a_{1}%
(\tau))\right]  }-\frac{\wp^{\prime \prime}(p(\tau))}{4\wp^{\prime}(p(\tau))}.
\label{135-10}%
\end{equation}
Recall that $\lambda(t)$ defined via (\ref{II-130}) is a solution of
PVI$(\frac{1}{8},\frac{-1}{8},\frac{1}{8},\frac{3}{8})$. Then by defining
$\mu(t)$ via the first equation of the Hamiltonian system (\ref{aa}),
$(\lambda(t),\mu(t))$ satisfies the Hamiltonian system (\ref{aa}). It follows
from (\ref{II-106}) below that the relation of $\mu(t)$ and $A(\tau)$ is given
by%
\begin{equation}
\mu(t(\tau))=\frac{1}{8}\frac{\mathfrak{p}^{\prime}(\lambda)}{\mathfrak{p}%
(\lambda)}+\frac{A\wp^{\prime}(p)}{\left(  e_{2}(\tau)-e_{1}(\tau)\right)
^{2}\mathfrak{p}(\lambda)}, \label{bb}%
\end{equation}
where
\begin{equation}
\mathfrak{p}(x)=4x(x-1)(x-t). \label{123}%
\end{equation}
Notice from (\ref{123}), (\ref{II-130}) and $\wp^{\prime}(z)^{2}=4\prod
_{k=1}^{3}(\wp(z)-e_{k})$ that {\allowdisplaybreaks%
\[
\mathfrak{p}(\lambda(t))=\frac{\wp^{\prime}(p(\tau))^{2}}{(  e_{2}%
(\tau)-e_{1}(\tau))  ^{3}},\quad\mathfrak{p}^{\prime}(\lambda
(t))=\frac{2\wp^{\prime \prime}(p(\tau))}{(  e_{2}(\tau)-e_{1}%
(\tau))  ^{2}}.
\]
}Inserting these and (\ref{135-10}) into (\ref{bb}), we easily
obtain{\allowdisplaybreaks%
\begin{align}
\mu(t)  &  =\frac{\left(  e_{2}(\tau)-e_{1}(\tau)\right)  \left(  4A(\tau
)\wp^{\prime}(p(\tau))+\wp^{\prime \prime}(p(\tau))\right)  }{4\wp^{\prime
}(p(\tau))^{2}}\nonumber \\
&  =\frac{e_{2}(\tau)-e_{1}(\tau)}{2\left[  \wp(p(\tau))-\wp(a_{1}%
(\tau))\right]  }. \label{135}%
\end{align}
Remark that (\ref{135}) always holds no matter with whether }$p(\tau)$ is a
completely reducible solution or not.

Recall that the monodromy is not completely reducible if and only if
(\ref{1115}) holds. By (\ref{II-130}) and (\ref{135}), this is equivalent to
\begin{equation}
\mu(t)=\left \{
\begin{array}
[c]{l}%
0,\text{ if }k=0,\\
\frac{1}{2\lambda(t)},\text{ if }k=1,\\
\frac{1}{2(\lambda(t)-1)},\text{ if }k=2,\\
\frac{1}{2(\lambda(t)-t)},\text{ if }k=3,
\end{array}
\right.  \text{ in a neighborhood of }t(\tau_{0})\text{,} \label{kakak}%
\end{equation}
namely one of (\ref{1000})-(\ref{1003}) holds after the analytic continuation.
The proof is complete.
\end{proof}

Now we want to find the expression of a not completely reducible solution
$p(\tau)$. Assume $[  a_{1}]  =[\frac{\omega_{k}}{2}]\in E_{\tau
}[2]$ by (\ref{1115}), and recall (\ref{135-2}) that%
\begin{equation}
A(\tau)=\frac{1}{2}[\zeta(\tfrac{\omega_{k}}{2}+p(\tau))-\zeta(\tfrac{\omega_{k}}{2}-p(\tau
))-\zeta(2p(\tau))]. \label{44}%
\end{equation}
By using $2\zeta(z)-\zeta(2z)=-\frac{1}{2}\frac{\wp^{\prime \prime}(z)}%
{\wp^{\prime}(z)}$, (\ref{44}) is equivalent to%
\begin{equation}
A(\tau)=-\frac{1}{4}\frac{\wp^{\prime \prime}(p(\tau)-\frac{\omega_{k}}{2})}{\wp^{\prime
}(p(\tau)-\frac{\omega_{k}}{2})}. \label{43-2}%
\end{equation}
As in Theorem \ref{thm0-1}, we let
\begin{equation}
y_{1}(z)=y_{a_{1}}(z)=\frac{e^{\frac{1}{2}[  \zeta(a_{1}+p)+\zeta
(a_{1}-p)]  z}\sigma(  z-a_{1})  }{\sqrt{\sigma
(z-p)\sigma(z+p)}},\quad a_1=\tfrac{\omega_k}{2}, \label{43}%
\end{equation}
and $y_{2}(z)=\chi(z)y_{1}(z)$ be linearly independent solutions of the
associated GLE$(\mathbf{0},p(\tau),A(\tau),\tau)$, where
\begin{equation}
\chi^{\prime}(z)=\text{const}\cdot y_{1}(z)^{-2}. \label{43-1}%
\end{equation}
Define%
\begin{equation}
(\varepsilon_{k,1},\varepsilon_{k,2})=\left \{
\begin{array}
[c]{l}%
(1,1),\text{ if }k=0,\\
(1,-1),\text{ if }k=1,\\
(-1,1),\text{ if }k=2,\\
(-1,-1),\text{ if }k=3.
\end{array}
\right.  \label{trace}%
\end{equation}

First we consider the case $[a_{1}]=[0]$. Then $y_1(z)=\frac{\sigma(z)}{\sqrt{\sigma(z-p)\sigma(z+p)}}=\Psi_{p}(z)$ (see Theorem \ref{thm0} for $\Psi_{p}(z)$) and
\[
y_{1}(z)^{-2}=\frac{\sigma(z+p)\sigma(z-p)}{\sigma(z)^{2}}=c(\wp(z)-\wp(p)),\;c\neq 0.
\]
So (\ref{43-1}) yields that we can take $\chi(z)=\zeta(z)+\wp(p)z$, namely for any $c(\tau)\not =0$, $(c(\tau)y_{1},y_{2})$ with $y_{2}
(z)=(\zeta(z)+\wp(p)z)y_{1}(z)$ is a fundamental system of
solutions to GLE$(\mathbf{0},p(\tau),$ $A(\tau),\tau)$. In particular, (\ref{psii}) implies
\begin{equation}
\ell_{j}^{\ast}%
\begin{pmatrix}
c(\tau)y_{1}\\
y_{2}%
\end{pmatrix}
=%
\begin{pmatrix}
1 & 0\\
\frac{\eta_{j}+\wp(p)\omega_{j}}{c(\tau)} & 1
\end{pmatrix}%
\begin{pmatrix}
c(\tau)y_{1}\\
y_{2}%
\end{pmatrix}
,\text{ \ }j=1,2. \label{43-3}%
\end{equation}

\begin{proposition}
\label{Prop-II-8}The solutions of the Riccati equation (\ref{1000}) can be
parameterized by $\mathcal{C}\in \mathbb{C}\mathbb{P}^{1}:$%
\begin{equation}
\lambda_{0,\mathcal{C}}^{\mathbf{0}}(t)=\frac{\wp(p_{0,\mathcal{C}%
}^{\mathbf{0}}(\tau)|\tau)-e_{1}(\tau)}{e_{2}(\tau)-e_{1}(\tau)},\text{ }%
\wp(p_{0,\mathcal{C}}^{\mathbf{0}}(\tau)|\tau)=\frac{\eta_{2}(\tau
)-\mathcal{C}\eta_{1}(\tau)}{\mathcal{C}-\tau}. \label{43-4}%
\end{equation}
Moreover, the monodromy of the associated GLE satifies%
\begin{equation}
\rho(\ell_{1})=%
\begin{pmatrix}
1 & 0\\
1 & 1
\end{pmatrix}
,\quad\rho(\ell_{2})=%
\begin{pmatrix}
1 & 0\\
\mathcal{C} & 1
\end{pmatrix}
. \label{m1}%
\end{equation}
Here when $\mathcal{C}=\infty$, it should be understand as%
\begin{equation}
\rho(\ell_{1})=I_{2},\quad\rho(\ell_{2})=%
\begin{pmatrix}
1 & 0\\
1 & 1
\end{pmatrix}
. \label{m1-0}%
\end{equation}

\end{proposition}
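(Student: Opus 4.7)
The approach is to combine Theorem~\ref{thm3.3} (which identifies solutions of~(\ref{1000}) with the not completely reducible case $k=0$) with the explicit monodromy computation~(\ref{43-3}) and the isomonodromy equivalence of Theorem~3.A. The plan is to show that isomonodromy forces the ratio $\chi_{2}/\chi_{1}$ in~(\ref{43-3}) to be a $\tau$-independent constant $\mathcal{C}$, and that solving the resulting algebraic relation for $\wp(p(\tau)|\tau)$ yields exactly the formula~(\ref{43-4}). Then a converse computation shows that every $\mathcal{C}\in\mathbb{CP}^{1}$ does arise in this way.

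For the forward direction, let $p(\tau)$ be a not completely reducible solution of EPVI$(\tfrac18,\tfrac18,\tfrac18,\tfrac18)$ with $[a_{1}]=[0]$, so by Theorem~\ref{thm3.3} its $\lambda(t)$ satisfies~(\ref{1000}). In a neighborhood $U$ of any generic $\tau_{0}$, the associated GLE$(\mathbf{0},p(\tau),A(\tau),\tau)$ has the fundamental system $(c(\tau)y_{1},y_{2})$ with $y_{1}=\Psi_{p}$ and $y_{2}=(\zeta(z)+\wp(p)z)y_{1}$, and the monodromy matrices are given by~(\ref{43-3}). Since $(p(\tau),A(\tau))$ satisfies the Hamiltonian system, Theorem~3.A says the monodromy representation (up to simultaneous conjugation) is preserved as $\tau$ varies; in the not completely reducible case the only conjugacy invariant beyond $(\varepsilon_{1},\varepsilon_{2})=(1,1)$ is $\mathcal{C}=\chi_{2}/\chi_{1}$ defined as in~(\ref{mono-C}). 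Normalizing $c(\tau)=\eta_{1}(\tau)+\wp(p(\tau)|\tau)$ turns~(\ref{43-3}) into the matrices~(\ref{m1}), so
\[
\mathcal{C}=\frac{\eta_{2}(\tau)+\wp(p(\tau)|\tau)\tau}{\eta_{1}(\tau)+\wp(p(\tau)|\tau)}
\]
is independent of $\tau$, and solving for $\wp(p(\tau)|\tau)$ produces exactly the right-hand side of~(\ref{43-4}).

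For the converse, fix $\mathcal{C}\in\mathbb{C}$ and define $p(\tau)$ by~(\ref{43-4}) together with $A(\tau)$ by~(\ref{43-2}). Then the data $\bigl(\mathbf{0},p(\tau),A(\tau),\tau\bigr)$ defines a family of GLEs, with $y_{1},y_{2}$ as above and with $c(\tau):=\eta_{1}(\tau)+\wp(p(\tau)|\tau)$ (which is nonzero by the choice of $\mathcal{C}$). By direct substitution into~(\ref{43-3}) using the defining equation for $\wp(p)$, the monodromy matrices with respect to $(c(\tau)y_{1},y_{2})$ are the \emph{constant} matrices~(\ref{m1}); hence this GLE family is isomonodromic. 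Theorem~3.A then forces $(p(\tau),A(\tau))$ to satisfy the Hamiltonian system~(\ref{142-0}), so $p(\tau)$ is a solution of EPVI$(\tfrac18,\tfrac18,\tfrac18,\tfrac18)$. Since the monodromy is not completely reducible with $[a_{1}]=[0]$, Theorem~\ref{thm3.3} and the case $k=0$ of~(\ref{kakak}) show $\lambda(t)=\frac{\wp(p(\tau)|\tau)-e_{1}(\tau)}{e_{2}(\tau)-e_{1}(\tau)}$ satisfies the Riccati equation~(\ref{1000}).

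For the degenerate case $\mathcal{C}=\infty$, the normalization $c(\tau)=\chi_{1}$ breaks down, but one instead takes the limit $\wp(p(\tau)|\tau)\to-\eta_{1}(\tau)$, at which $\chi_{1}=0$ and $\chi_{2}=\eta_{2}-\tau\eta_{1}=-2\pi i\neq0$. Normalizing with $c(\tau)=\chi_{2}/1$ and applying~(\ref{43-3}) gives the monodromy~(\ref{m1-0}), and the same argument via Theorem~3.A produces a solution of~(\ref{1000}). The main technical point I expect to verify carefully is the constancy-in-$\tau$ check after substitution: one must confirm that the identity $\mathcal{C}(\eta_{1}+\wp(p))=\eta_{2}+\wp(p)\tau$ with $\wp(p)$ prescribed by~(\ref{43-4}) holds identically in $\tau$, which reduces to the very definition of $p(\tau)$ and so is automatic, but separating the $\mathcal{C}\in\mathbb{C}$ and $\mathcal{C}=\infty$ branches cleanly (including uniqueness of the parameterization, which follows from the bijectivity of $\mathcal{C}\mapsto\frac{\eta_{2}-\mathcal{C}\eta_{1}}{\mathcal{C}-\tau}$ on $\mathbb{CP}^{1}$) is the only delicate bookkeeping.
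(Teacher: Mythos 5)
Your proposal is correct and follows essentially the same route as the paper's proof: both directions rest on the explicit monodromy computation (\ref{43-3}), the isomonodromy equivalence of Theorem 3.A, and the observation that $\mathcal{C}$ must be a $\tau$-independent constant (which you obtain via conjugation-invariance of $\mathcal{C}$, where the paper writes out the conjugating matrix $\gamma$ explicitly), yielding (\ref{43-4}) by solving $\mathcal{C}(\eta_{1}+\wp(p))=\eta_{2}+\wp(p)\tau$. The only difference is the order in which you present the two implications, which does not affect the substance.
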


\begin{proof}
In this proof, we omit $\boldsymbol{0},0$ in the notations.

\textbf{Step 1.} We prove that for any constant $\mathcal{C}\in \mathbb{C}%
\mathbb{P}^{1}$, $\lambda_{\mathcal{C}}(t)$ given by (\ref{43-4}) solves the
Riccati equation (\ref{1000}).

Fix any $\mathcal{C}\in \mathbb{C}\mathbb{P}^{1}$ and let $p(\tau
)=p_{\mathcal{C}}(\tau)$, $A(\tau)=-\frac{1}{4}\frac{\wp^{\prime \prime}%
(p(\tau))}{\wp^{\prime}(p(\tau))}$ in GLE$(\mathbf{0},p(\tau),$ $A(\tau
),\tau)$. If $\mathcal{C}=\infty$, then $\wp(p(\tau))=-\eta_{1}(\tau)$. Choose
$c(\tau)=\eta_{2}(\tau)+\wp(p(\tau))\tau$. By the Legendre relation $\tau
\eta_{1}(\tau)-\eta_{2}(\tau)=2\pi i$ we have $c(\tau)=-2\pi i$. Thus by
(\ref{43-3}), we obtain (\ref{m1-0}). That is, GLE$(\mathbf{0},p(\tau),$
$A(\tau),\tau)$ is monodromy preserving as $\tau$ deforms, so $p(\tau)=p_{\infty}%
(\tau)$ is a solution of EPVI$(\frac{1}{8},\frac{1}{8},\frac{1}{8},\frac{1}{8})$.

If $\mathcal{C}\not =\infty$, then (\ref{43-4}) gives $\eta_{1}(\tau
)+\wp(p(\tau))\not \equiv 0$ and $\mathcal{C}=\frac{\eta_{2}(\tau)+\wp
(p(\tau))\tau}{\eta_{1}(\tau)+\wp(p(\tau))}$. Choose $c(\tau)=\eta_{1}%
(\tau)+\wp(p(\tau))$. Clearly except a set of discrete points in $\mathbb{H}$,
$c(\tau)\not =0$ and so (\ref{43-3}) gives (\ref{m1}). Again we conclude
that $p(\tau)=p_{\mathcal{C}}(\tau)$ is a solution of EPVI$(\frac{1}{8},\frac{1}{8},\frac{1}{8},\frac{1}{8})$.
Formula (\ref{43-4}) can be found in \cite{Hit1,Takemura}. Here together with
$a_{1}=0$ and (\ref{kakak}), we note that $\lambda_{\mathcal{C}%
}(t)$ actually solves the Ricatti equation (\ref{1000}).

\textbf{Step 2.} Let $\lambda(t)$ be any solution of the Riccati equation
(\ref{1000}). We prove the existence of $\mathcal{C}\in \mathbb{C}%
\mathbb{P}^{1}$ such that $\lambda(t)=\lambda_{\mathcal{C}}(t)$.

Define $\pm[p(\tau)]$ by $\lambda(t)$ via (\ref{II-130}) and
$A(\tau)=-\frac{1}{4}\frac{\wp^{\prime \prime}(p(\tau))}{\wp^{\prime}(p(\tau
))}$. Then $p(\tau)$ is a solution of EPVI$(\frac{1}{8},\frac{1}{8},\frac{1}{8},\frac{1}{8})$ and
the associated GLE$(\mathbf{0}, p(\tau), A(\tau), \tau)$ is monodromy preserving as $\tau$ deforms.
So there exists a fundamental system of solutions $(\tilde{y}%
_{1}(z;\tau),\tilde{y}_{2}(z;\tau))$ such that the monodromy matrices $M_{1}$,
$M_{2}$, which are defined by%
\[
\ell_{j}^{\ast}%
\begin{pmatrix}
\tilde{y}_{1}\\
\tilde{y}_{2}%
\end{pmatrix}
=M_{j}%
\begin{pmatrix}
\tilde{y}_{1}\\
\tilde{y}_{2}%
\end{pmatrix}
,\text{ }j=1,2,
\]
are independent of $\tau$. We may assume $\wp(p(\tau)|\tau)\not \equiv
\wp(p_{\infty}(\tau)|\tau)$, otherwise we are done. Then $c(\tau):=\eta
_{1}(\tau)+\wp(p(\tau))\not \equiv 0$. For any $\tau$ such that $c(\tau
)\not =0$, $(c(\tau)y_{1},y_{2})$ given by (\ref{43})-(\ref{43-3}) is also a fundamental system of solutions,
so there is an invertible matrix $\gamma=%
\begin{pmatrix}
a & b\\
c & d
\end{pmatrix}
$ such that $%
\begin{pmatrix}
\tilde{y}_{1}\\
\tilde{y}_{2}%
\end{pmatrix}
=\gamma%
\begin{pmatrix}
c(\tau)y_{1}\\
y_{2}%
\end{pmatrix}
$. Clearly the monodromy matrices of $(c(\tau)y_{1},y_{2})$ is given by
(\ref{m1}), where
\begin{equation}
\mathcal{C}=\frac{\eta_{2}(\tau)+\wp(p(\tau)|\tau)\tau}{\eta_{1}%
(\tau)+\wp(p(\tau)|\tau)} \label{p-c}%
\end{equation}
might depend on $\tau$ at the moment. Then%
\[
M_{1}=\gamma%
\begin{pmatrix}
1 & 0\\
1 & 1
\end{pmatrix}
\gamma^{-1}=%
\begin{pmatrix}
1+\frac{bd}{ad-bc} & \frac{-b^{2}}{ad-bc}\\
\frac{d^{2}}{ad-bc} & 1-\frac{bd}{ad-bc}%
\end{pmatrix}
,
\]%
\[
M_{2}=\gamma%
\begin{pmatrix}
1 & 0\\
\mathcal{C} & 1
\end{pmatrix}
\gamma^{-1}=%
\begin{pmatrix}
1+\frac{bd}{ad-bc}\mathcal{C} & \frac{-b^{2}}{ad-bc}\mathcal{C}\\
\frac{d^{2}}{ad-bc}\mathcal{C} & 1-\frac{bd}{ad-bc}\mathcal{C}%
\end{pmatrix}
.
\]
Since $M_{1}$, $M_{2}$ are independent of $\tau$ and $|b|^2+|d|^2\neq 0$, we
conclude that $\mathcal{C}$ is a constant independent of $\tau$. Consequently,
(\ref{p-c}) implies $\wp(p(\tau)|\tau)=\wp(p_{\mathcal{C}}(\tau)|\tau)$ and so
$\lambda(t)=\lambda_{\mathcal{C}}(t)$.
\end{proof}

Similarly, we can prove that all solutions of the other three Riccati
equations can be parameterized by $\mathbb{CP}^{1}$. The calculation is as
follows. Fix $k\in \{1,2,3\}$. By (\ref{43}) it is easy to see that
\[
\chi(z):=-\frac{\wp(p)-e_{k}}{(e_{k}-e_{i})(e_{k}-e_{j})}\zeta(z-\tfrac
{\omega_{k}}{2})-\left(  1+e_{k}\frac{\wp(p)-e_{k}}{(e_{k}-e_{i})(e_{k}%
-e_{j})}\right)  z
\]
satisfies (\ref{43-1}), where $\{i,j\}=\{1,2,3\} \backslash \{k\}$. As before,
for any $c(\tau)\not =0$, $(c(\tau)y_{1}(z),y_{2}(z))$ with $y_{2}(z)=\chi(z)y_{1}(z)$ is a fundamental system
of solutions to GLE$(\mathbf{0},p(\tau),A(\tau),\tau)$. In particular, as in Theorem \ref{thm0-1} we easily obtain
\begin{equation}
\ell_{1}^{\ast}%
\begin{pmatrix}
c(\tau)y_{1}\\
y_{2}%
\end{pmatrix}
=\varepsilon_{k,1}%
\begin{pmatrix}
1 & 0\\
-\frac{D\eta_{1}+(1+De_{k})}{c(\tau)} & 1
\end{pmatrix}%
\begin{pmatrix}
c(\tau)y_{1}\\
y_{2}%
\end{pmatrix}
, \label{45-4}%
\end{equation}%
\[
\ell_{2}^{\ast}%
\begin{pmatrix}
c(\tau)y_{1}\\
y_{2}%
\end{pmatrix}
=\varepsilon_{k,2}%
\begin{pmatrix}
1 & 0\\
-\frac{D\eta_{2}+\tau(1+De_{k})}{c(\tau)} & 1
\end{pmatrix}%
\begin{pmatrix}
c(\tau)y_{1}\\
y_{2}%
\end{pmatrix}
,
\]
where $(\varepsilon_{k,1},\varepsilon_{k,2})$ is given by (\ref{trace}) and%
\begin{equation}
D:=\frac{\wp(p)-e_{k}}{(e_{k}-e_{i})(e_{k}-e_{j})}. \label{45-7}%
\end{equation}

\begin{proposition}
\label{Prop-II-9}For $k\in \{1,2,3\}$ and $\mathcal{C}\in \mathbb{CP}^{1}$, we
let
\[
\lambda_{k,\mathcal{C}}^{\mathbf{0}}(t)=\frac{\wp(p_{k,\mathcal{C}%
}^{\mathbf{0}}(\tau)|\tau)-e_{1}(\tau)}{e_{2}(\tau)-e_{1}(\tau)},
\]
where
\begin{equation}
\wp(p_{k,\mathcal{C}}^{\mathbf{0}}(\tau)|\tau):=\frac{e_{k}(\mathcal{C}\eta
_{1}(\tau)-\eta_{2}(\tau))+(\frac{g_{2}}{4}-2e_{k}^{2})(\mathcal{C}-\tau
)}{\mathcal{C}\eta_{1}(\tau)-\eta_{2}(\tau)+e_{k}(\mathcal{C}-\tau)}.
\label{45-1}%
\end{equation}
Then $\lambda_{k,\mathcal{C}}^{\mathbf{0}}(t)$ satisfies the Ricatti equation
(\ref{1001}) if $k=1$, (\ref{1002}) if $k=2$, (\ref{1003}) if $k=3$.
Conversely, such $\lambda_{k,\mathcal{C}}^{\mathbf{0}}(t)$ give all the
solutions of these three Riccati equations respectively. Furthermore, the
monodromy of its associated GLE satisfies%
\begin{equation}
\rho(\ell_{1})=\varepsilon_{k,1}%
\begin{pmatrix}
1 & 0\\
1 & 1
\end{pmatrix}
,\quad\rho(\ell_{2})=\varepsilon_{k,2}%
\begin{pmatrix}
1 & 0\\
\mathcal{C} & 1
\end{pmatrix}
, \label{inftn}%
\end{equation}
where as before, when $\mathcal{C}=\infty$, it should be understand as%
\begin{equation}
\rho(\ell_{1})=\varepsilon_{k,1}I_{2},\quad\rho(\ell_{2})=\varepsilon_{k,2}%
\begin{pmatrix}
1 & 0\\
1 & 1
\end{pmatrix}
. \label{inft}%
\end{equation}

\end{proposition}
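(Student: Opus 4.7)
The plan is to mirror the two-step proof of Proposition \ref{Prop-II-8}, with the base point $a_{1}=0$ replaced by $a_{1}=\omega_{k}/2$, using the explicit monodromy formulas (\ref{45-4}) in place of (\ref{43-3}). In both directions the only genuinely new ingredient is to convert between the parameter $D$ of (\ref{45-7}) and the value $\wp(p)$ under the desired normalization of the lower-triangular monodromy matrices.

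For the sufficient direction, given $k\in\{1,2,3\}$ and $\mathcal{C}\in\mathbb{C}$, I would set $p(\tau):=p_{k,\mathcal{C}}^{\mathbf{0}}(\tau)$, define $A(\tau)$ by (\ref{43-2}), and choose $c(\tau)$ so that the two matrices in (\ref{45-4}) have lower-left entries equal to $\varepsilon_{k,1}$ and $\varepsilon_{k,2}\mathcal{C}$ respectively. The linear system
\[
-\frac{D\eta_{1}+(1+De_{k})}{c(\tau)}=1,\qquad -\frac{D\eta_{2}+\tau(1+De_{k})}{c(\tau)}=\mathcal{C}
\]
forces $D=-\tfrac{\mathcal{C}-\tau}{(\mathcal{C}\eta_{1}-\eta_{2})+e_{k}(\mathcal{C}-\tau)}$, and converting from $D$ back to $\wp(p)$ via (\ref{45-7}) and simplifying using the identity $(e_{k}-e_{i})(e_{k}-e_{j})=3e_{k}^{2}-\tfrac{g_{2}}{4}$ (a consequence of $e_{1}+e_{2}+e_{3}=0$) should recover precisely (\ref{45-1}). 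Once $c(\tau)$ is seen to be meromorphic and not identically zero, the associated GLE preserves its (not completely reducible) monodromy as $\tau$ deforms; Theorem~3.A then declares $p_{k,\mathcal{C}}^{\mathbf{0}}(\tau)$ a solution of EPVI$(\tfrac18,\tfrac18,\tfrac18,\tfrac18)$, and the correspondence (\ref{kakak}) in the proof of Theorem \ref{thm3.3} forces $\lambda_{k,\mathcal{C}}^{\mathbf{0}}(t)$ to solve the $k$-th Riccati equation among (\ref{1001})--(\ref{1003}). The $\mathcal{C}=\infty$ case is handled separately by instead imposing $D\eta_{1}+1+De_{k}=0$, i.e.\ $D=-1/(\eta_{1}+e_{k})$; the Legendre relation $\tau\eta_{1}-\eta_{2}=2\pi i$ then yields the explicit $c(\tau)=-2\pi i/(\eta_{1}+e_{k})\not\equiv 0$ and the monodromy (\ref{inft}).

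For the necessary direction, I would start from any solution $\lambda(t)$ of the $k$-th Riccati equation, define $p(\tau),A(\tau)$ via (\ref{II-130}) and the Hamiltonian system (\ref{aa}), and invoke Theorem \ref{thm3.3} to conclude that $p(\tau)$ is not completely reducible with $[a_{1}(\tau)]=[\omega_{k}/2]$, so that (\ref{45-4}) applies. Since the associated GLE is monodromy preserving, there exists a fundamental system $(\tilde y_{1},\tilde y_{2})$ whose monodromy $M_{1},M_{2}$ is independent of $\tau$; relating it to $(c(\tau)y_{1},y_{2})$ via a change of basis $\gamma$ and repeating verbatim the conjugation argument in Step~2 of Proposition \ref{Prop-II-8} (the entries of $M_{j}$ are fixed $\gamma$-data multiplied by $1$ or $\mathcal{C}$) shows that the a priori $\tau$-dependent ratio
\[
\mathcal{C}=\frac{D\eta_{2}+\tau(1+De_{k})}{D\eta_{1}+1+De_{k}}
\]
must in fact be constant; solving for $\wp(p(\tau)|\tau)$ then recovers (\ref{45-1}) and hence $\lambda(t)=\lambda_{k,\mathcal{C}}^{\mathbf{0}}(t)$.

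I expect the principal obstacle to be the algebraic identification between the closed form for $D=D(\tau)$ produced by the normalization and the compact expression (\ref{45-1}); it hinges on the symmetric-function identity $(e_{k}-e_{i})(e_{k}-e_{j})=3e_{k}^{2}-g_{2}/4$ together with careful sign-tracking. The second care-point is the $\mathcal{C}=\infty$ limit, where one cannot normalize $\rho(\ell_{1})_{21}$ to $1$ and must instead normalize $\rho(\ell_{2})_{21}$ to $1$; as in Proposition \ref{Prop-II-8}, the Legendre relation makes $c(\tau)$ explicit and nonzero, so the rest of the argument carries through without modification.
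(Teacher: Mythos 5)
Your proposal follows the paper's proof essentially verbatim: the sufficient direction is the paper's choice of $A(\tau)$ via (\ref{43-2}) and of $c(\tau)$ normalizing the lower-left entries of (\ref{45-4}) (with the same separate treatment of $\mathcal{C}=\infty$ via the Legendre relation), followed by Theorem 3.A and (\ref{kakak}); the converse is, as in the paper, a repetition of Step 2 of Proposition \ref{Prop-II-8}. The only addition is that you make explicit the algebraic passage between $D$ and $\wp(p)$ through $(e_{k}-e_{i})(e_{k}-e_{j})=3e_{k}^{2}-\tfrac{g_{2}}{4}$, which the paper leaves implicit, and this computation checks out.
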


\begin{proof}
We sketch the proof for fixed $k\in \{1,2,3\}$ and omit $\mathbf{0,}k$ in the
notations. For any $\mathcal{C}\in \mathbb{C}\mathbb{P}^{1}$, we let
$p(\tau)=p_{\mathcal{C}}(\tau)$, $A(\tau)=-\frac{1}{4}\frac{\wp^{\prime \prime
}(p(\tau)-\frac{\omega_{k}}{2})}{\wp^{\prime}(p(\tau)-\frac{\omega_{k}}{2})}$
in GLE$(\mathbf{0},p(\tau),A(\tau),\tau)$. If $\mathcal{C}=\infty$, i.e.
$D\eta_{1}+(1+De_{k})\equiv0$, then we choose $c(\tau)=-[D\eta_{2}%
+\tau(1+De_{k})]=\frac{-2\pi i}{\eta_{1}(\tau)+e_{k}(\tau)}\not \equiv 0$. By
(\ref{45-4}) we obtain (\ref{inft}). If $\mathcal{C}\not =\infty$, then
(\ref{45-1}) gives $D\eta_{1}+(1+De_{k})\not \equiv 0$ and $\mathcal{C}%
=\frac{D\eta_{2}+\tau(1+De_{k})}{D\eta_{1}+(1+De_{k})}$. Choose $c(\tau
)=-[D\eta_{1}+(1+De_{k})]$, then we immediately obtain (\ref{inftn}). In both cases, GLE$(\mathbf{0},p(\tau),A(\tau),\tau)$ is monodromy preserving, so $p(\tau)=p_{\mathcal{C}}(\tau)$ is a solution of EPVI$(\frac{1}{8},\frac{1}{8},\frac{1}{8},\frac{1}{8})$. Formula (\ref{45-1})
was first obtained in \cite{Takemura}. Here by $a_{1}
=\frac{\omega_{k}}{2}$ and (\ref{kakak}), we note that $\lambda_{\mathcal{C}}(t)$
actually satisfies the Ricatti equation (\ref{1001}) if $k=1$, (\ref{1002}) if
$k=2$, (\ref{1003}) if $k=3$. The rest of the proof is similar to that of
Proposition \ref{Prop-II-8}.
\end{proof}

Remark that the explict expression of $\wp(p_{k,\mathcal{C}}^{\mathbf{0}}%
(\tau)|\tau)$ immediately implies%
\begin{equation}
\wp(p_{k,\mathcal{C}_{1}}^{\mathbf{0}}(\tau)|\tau)\equiv \wp(p_{k,\mathcal{C}%
_{2}}^{\mathbf{0}}(\tau)|\tau)\Longleftrightarrow \mathcal{C}_{1}%
=\mathcal{C}_{2}. \label{uni-rec}%
\end{equation}

The above results completely classify all the solutions of EPVI$(\frac{1}{8},\frac{1}{8},\frac{1}{8},\frac{1}{8})$ in terms of the global monodromy data of the associated GLE. For a
completely reducible solution $p_{r,s}^{\mathbf{0}}(\tau)$, we denote the
corresponding $\mu(t)$ by $\mu_{r,s}^{\mathbf{0}}(t)$ and (\ref{135}) gives%
\begin{equation}
\mu_{r,s}^{\mathbf{0}}(t)=\frac{e_{2}(\tau)-e_{1}(\tau)}{2\left[  \wp
(p_{r,s}^{\mathbf{0}}(\tau)|\tau)-\wp(r+s\tau|\tau)\right]  }. \label{mu}%
\end{equation}
For a not completely reducible solution $p_{k,\mathcal{C}}^{\mathbf{0}}(\tau)$,
we denote the corresponding $\mu(t)$ by $\mu_{k,\mathcal{C}}^{\mathbf{0}}(t)$,
and by (\ref{1000})-(\ref{1003}) or (\ref{135}),
\[
\mu_{0,\mathcal{C}}^{\mathbf{0}}(t)\equiv0,\quad
\mu_{k,\mathcal{C}}^{\mathbf{0}}(t)=\frac{e_{2}(\tau)-e_{1}(\tau)}{2[
\wp(p_{k,\mathcal{C}}^{\mathbf{0}}(\tau)|\tau)-e_{k}(\tau)]  }, \quad k=1,2,3.
\]
We conclude this section by studying the precise relation between these two kinds of solutions.

\begin{theorem}
\label{comp-non}For $\mathcal{C}\not =\infty$, there holds%
\[
\wp(p_{k,\mathcal{C}}^{\mathbf{0}}(\tau)|\tau)=\left \{
\begin{array}
[c]{l}%
\lim_{s\rightarrow0}\wp(p_{-\mathcal{C}s,s}^{\mathbf{0}}(\tau)|\tau)\text{
\  \ if }k=0,\\
\lim_{s\rightarrow0}\wp(p_{\frac{1}{2}-\mathcal{C}s,s}^{\mathbf{0}}(\tau
)|\tau)\text{ \  \ if }k=1,\\
\lim_{s\rightarrow0}\wp(p_{\mathcal{C}s,\frac{1}{2}-s}^{\mathbf{0}}(\tau
)|\tau)\text{ \  \ if }k=2,\\
\lim_{s\rightarrow0}\wp(p_{\frac{1}{2}+\mathcal{C}s,\frac{1}{2}-s}%
^{\mathbf{0}}(\tau)|\tau)\text{ \  \ if }k=3,
\end{array}
\right.
\]
and the same holds for $\mu_{k,\mathcal{C}}^{\mathbf{0}}(t)$ as the limit of
$\mu_{r,s}^{\mathbf{0}}(t)$ for $(r,s)=(-\mathcal{C}s,s)$ if $k=0$, and so on.

For $\mathcal{C}=\infty$, there holds
\[
\wp(p_{k,\infty}^{\mathbf{0}}(\tau)|\tau)=\left \{
\begin{array}
[c]{l}%
\lim_{r\rightarrow0}\wp(p_{r,0}^{\mathbf{0}}(\tau)|\tau)\text{ \  \ if }k=0,\\
\lim_{r\rightarrow0}\wp(p_{\frac{1}{2}+r,0}^{\mathbf{0}}(\tau)|\tau)\text{
\  \ if }k=1,\\
\lim_{r\rightarrow0}\wp(p_{r,\frac{1}{2}}^{\mathbf{0}}(\tau)|\tau)\text{
\  \ if }k=2,\\
\lim_{r\rightarrow0}\wp(p_{\frac{1}{2}+r,\frac{1}{2}}^{\mathbf{0}}(\tau
)|\tau)\text{ \  \ if }k=3,
\end{array}
\right.
\]
and the same holds for $\mu_{k,\infty}^{\mathbf{0}}(t)$ as the limit of
$\mu_{r,s}^{\mathbf{0}}(t)$ for $(r,s)=(r,0)$ if $k=0$, and so on.
\end{theorem}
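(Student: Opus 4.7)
The plan is to verify each of the eight limit identities by direct Laurent/Taylor expansion of the Weierstrass functions at the appropriate point (the origin for $k=0$, the half-period $\omega_k/2$ for $k=1,2,3$), and then to deduce the $\mu$-identities from the general formula \eqref{mu}.

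For the case $k=0$ with $\mathcal{C}\neq\infty$, I would set $u:=r+s\tau=s(\tau-\mathcal{C})$, which tends to $0$ as $s\to 0$. Substituting
\[
\wp(u)=u^{-2}+O(u^{2}),\quad \wp'(u)=-2u^{-3}+O(u),\quad \zeta(u)=u^{-1}+O(u^{3})
\]
into Hitchin's formula \eqref{513-1}, using $r\eta_1+s\eta_2=s(\eta_2-\mathcal{C}\eta_1)$, and expanding the geometric series $(\zeta(u)-s(\eta_2-\mathcal{C}\eta_1))^{-1}=u+s(\eta_2-\mathcal{C}\eta_1)u^{2}+O(u^{3})$, one sees that the $u^{-2}$-singularity in $\wp(u)$ cancels the leading term of $\wp'(u)/(2(\zeta(u)-s(\eta_2-\mathcal{C}\eta_1)))$. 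What survives in the limit is precisely $(\eta_2-\mathcal{C}\eta_1)/(\mathcal{C}-\tau)$, matching \eqref{43-4}. The cases $k\in\{1,2,3\}$ are analogous but use the Taylor expansions at $\omega_k/2$:
\[
\wp(\tfrac{\omega_k}{2}+u)=e_k+O(u^{2}),\quad \wp'(\tfrac{\omega_k}{2}+u)=(6e_k^{2}-\tfrac{g_2}{2})u+O(u^{3}),
\]
\[
\zeta(\tfrac{\omega_k}{2}+u)=\tfrac{\eta_k}{2}-e_k u+O(u^{3}),
\]
with $\eta_3:=\eta_1+\eta_2$. In these cases the chosen parameterization makes $u=\pm s(\tau-\mathcal{C})$ and $r\eta_1+s\eta_2=\eta_k/2+s(\eta_2-\mathcal{C}\eta_1)$ (up to signs). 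Both $\wp'(r+s\tau)$ and $\zeta(r+s\tau)-r\eta_1-s\eta_2$ vanish linearly in $s$, so their ratio has a finite nonzero limit; adding $e_k=\lim\wp(r+s\tau)$ and simplifying reproduces formula \eqref{45-1}.

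The case $\mathcal{C}=\infty$ is handled by taking $r\to 0$ instead of $s\to 0$; the computation is entirely analogous, and the explicit formulas \eqref{43-4} and \eqref{45-1} have well-defined limits as $\mathcal{C}\to\infty$ that agree with the corresponding $r\to 0$ limits of Hitchin's formula by the same Laurent-expansion argument. Finally, the $\mu$-identities follow immediately from \eqref{mu}: the denominator $2[\wp(p_{r,s}^{\mathbf{0}}(\tau)|\tau)-\wp(r+s\tau|\tau)]$ contains $\wp(r+s\tau|\tau)$, which blows up like $u^{-2}$ in case $k=0$ (yielding $\mu_{r,s}^{\mathbf{0}}\to 0=\mu_{0,\mathcal{C}}^{\mathbf{0}}$), and tends to $e_k(\tau)$ in cases $k=1,2,3$ (yielding $\mu_{r,s}^{\mathbf{0}}\to (e_2-e_1)/(2[\wp(p_{k,\mathcal{C}}^{\mathbf{0}})-e_k])=\mu_{k,\mathcal{C}}^{\mathbf{0}}$).

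The main obstacle is bookkeeping across the four values of $k$ and the two options for $\mathcal{C}$: one must track the signs of $u$ carefully (they depend on $k$ through the different half-period offsets in the parameterization) and verify that the algebraic simplification after substitution really reproduces the exact form \eqref{45-1} of Proposition \ref{Prop-II-9}. Each of the eight cases is a short residue-type calculation — the limits are in effect forced by the fact that both sides solve EPVI$(\tfrac{1}{8},\tfrac{1}{8},\tfrac{1}{8},\tfrac{1}{8})$ and share the same monodromy data $\mathcal{C}$ — but the sign and coefficient matching must be carried out explicitly.
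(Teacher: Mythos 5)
Your proposal is correct and follows essentially the same route as the paper: substitute the Laurent (resp.\ Taylor) expansions of $\wp$, $\wp'$, $\zeta$ at $0$ (resp.\ at $\omega_k/2$) into Hitchin's formula (\ref{513-1}) along the indicated one-parameter families, observe the cancellation of the singular terms, and read off the $\mu$-limits from (\ref{mu}); the paper likewise carries out only the $k=0$, $\mathcal{C}\neq\infty$ case explicitly and leaves the remaining cases as analogous computations. The only detail the paper adds that you might note is that the convergence is uniform on compact subsets of $\mathbb{H}$, which is what justifies regarding the limit as a solution of EPVI$(\frac18,\frac18,\frac18,\frac18)$.
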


\begin{proof}
The proof is just by computations. For example, for $\mathcal{C}\not =\infty$,
we denote $u=-\mathcal{C}s+s\tau=s(\tau-\mathcal{C})$ for convenience. Then
$u\rightarrow0$ as $s\rightarrow0$, and it follows from the Laurent series of
$\zeta(\cdot|\tau)$ and $\wp(\cdot|\tau)$ that
\[
\zeta(-Cs+s\tau|\tau)=\frac{1}{u}-\frac{g_{2}}{60}u^{3}+O(|u|^{5}),
\]%
\[
\wp(-Cs+s\tau|\tau)=\frac{1}{u^{2}}+\frac{g_{2}}{20}u^{2}+O(|u|^{4}),
\]%
\[
\wp^{\prime}(-Cs+s\tau|\tau)=\frac{-2}{u^{3}}+\frac{g_{2}}{10}u+O(|u|^{3}),
\]
hold uniformly for $\tau$ in any compact subset $K\subset \mathbb{H}$ as $s\rightarrow0$. Inserting these into Hicthin's formula (\ref{513-1}), we
easily obtain that
\[
\lim_{s\rightarrow0}\wp(p_{-\mathcal{C}s,s}^{\mathbf{0}}(\tau)|\tau
)=\frac{\eta_{2}(\tau)-\mathcal{C}\eta_{1}(\tau)}{\mathcal{C}-\tau}%
=\wp(p_{0,\mathcal{C}}^{\mathbf{0}}(\tau)|\tau)
\]
holds uniformly for $\tau$ in any compact subset $K$. Therefore, as solutions of EPVI$(\frac{1}{8},\frac{1}{8},\frac{1}{8},\frac{1}{8})$, $\wp(p_{0,\mathcal{C}}^{\mathbf{0}}(\tau)|\tau)\to \wp(p_{-\mathcal{C}s,s}^{\mathbf{0}}(\tau)|\tau
)$ as $s\to 0$.
Furthermore, it follows from (\ref{mu}) that $\lim_{s\rightarrow0}%
\mu_{-\mathcal{C}s,s}^{\mathbf{0}}(t)=0=\mu_{0,\mathcal{C}}^{\mathbf{0}}(t)$.
The other formulas can be proved similarly and we omit the details here.
\end{proof}

In the next section, we will generalize the above results to the general
case $\mathbf{n}\not =\mathbf{0}$ via the well known B\"{a}cklund transformation.

\section{General case via the B\"{a}cklund transformation}

\label{General-case-Ok}

The purpose of this section is to classify all the solutions of the elliptic
form (\ref{124}) with parameters%
\begin{equation}
\alpha_{k}=\tfrac{(2n_{k}+1)^{2}}{8},\text{ }n_{k}\in
\mathbb{Z}_{\geq0}\text{ for all }k\text{ and }\mathbf{n}\not =\mathbf{0},
\label{parameter0}%
\end{equation}
or equivalently PVI with parameters%
\begin{align}
(\alpha,\beta,\gamma,\delta)=  &  \left(  \tfrac{(2n_{0}+1)^{2}}{8},\text{ }-\tfrac{(2n_{1}+1)^{2}}{8},\text{ }\tfrac{(2n_{2}+1)^{2}}{8},\right. \nonumber \\
&  \left.  \tfrac{1}{2}-\tfrac{(2n_{3}+1)^{2}}{8}\right)  ,\text{
}n_{k}\in \mathbb{Z}_{\geq0}\text{ for all }k\text{ and }\mathbf{n}%
\not =\mathbf{0}, \label{parameter}%
\end{align}
in terms of the global monodromy data of the associated GLE. The idea is to apply the B\"{a}cklund transformations.

It is known that solutions of PVI with parameter (\ref{parameter}) could be
obtained from solutions of PVI$(\frac{1}{8},\frac{-1}{8},\frac{1}{8},\frac
{3}{8})$ (i.e. $n_{k}=0$ for all $k$) via the B\"{a}cklund transformations
(\cite{Okamoto1}). By (\ref{46-2})-(\ref{46-3}), it is convenient to consider the parameter space of PVI (equivalently the Hamiltonian system
(\ref{aa})-(\ref{98})) as an affine space%
\[
\mathcal{K}=\left \{ \theta=(\theta_{0},\theta_{1},\theta_{2}%
,\theta_{3},\theta_{4})\in \mathbb{C}^{5}\text{ }:\text{ }2\theta_{0}%
+\theta_{1}+\theta_{2}+\theta_{3}+\theta_{4}=1\right \}  .
\]

\begin{definition}
\cite{Okamoto1} \textit{An B\"{a}cklund transformation }$\kappa$\textit{ is an
invertible mapping which maps solutions }$(\lambda(t),\mu(t),t)$\textit{ of
the Hamiltonian system (\ref{aa}) with parameter }$\theta$\textit{ to
solutions }$(\kappa(\lambda)(t),\kappa(\mu)(t),$ $t)$\textit{ of (\ref{aa})
with new parameter }$\kappa(\theta)\in K$\textit{ where both }%
$\kappa(\lambda)(t)$ \textit{and }$\kappa(\mu)(t)$\textit{ are rational
functions of }$\lambda, \mu, t$. \textit{In particular,
}$\kappa(\lambda)(t)$\textit{ is a solution to PVI (\ref{46}) with new
parameter }$\kappa(\theta)\in \mathcal{K}$\textit{.}
\end{definition}

The list of the B\"{a}cklund transformations $\kappa_{j}(0\leq j\leq4)$ is given in
the Table 1 (cf. \cite{Tsuda-Okamoto-Sakai}). \begin{table}[tbh]
\caption{B\"{a}cklund transformations}%
\centering
\par%
\begin{tabular}
[c]{|l|c|c|c|c|c|l|c|c|}\hline
& $\theta_{0}$ & $\theta_{1}$ & $\theta_{2}$ & $\theta_{3}$ & $\theta_{4}$ &
$t$ & $\lambda$ & $\mu$\\ \hline
$\kappa_{0}$ & $-\theta_{0}$ & $\theta_{1}+\theta_{0}$ & $\theta_{2}%
+\theta_{0}$ & $\theta_{3}+\theta_{0}$ & $\theta_{4}+\theta_{0}$ & $t$ &
$\lambda+\frac{\theta_{0}}{\mu}$ & $\mu$\\ \hline
$\kappa_{1}$ & $\theta_{0}+\theta_{1}$ & $-\theta_{1}$ & $\theta_{2}$ &
$\theta_{3}$ & $\theta_{4}$ & $t$ & $\lambda$ & $\mu-\frac{\theta_{1}}%
{\lambda}$\\ \hline
$\kappa_{2}$ & $\theta_{0}+\theta_{2}$ & $\theta_{1}$ & $-\theta_{2}$ &
$\theta_{3}$ & $\theta_{4}$ & $t$ & $\lambda$ & $\mu-\frac{\theta_{2}}%
{\lambda-1}$\\ \hline
$\kappa_{3}$ & $\theta_{0}+\theta_{3}$ & $\theta_{1}$ & $\theta_{2}$ &
$-\theta_{3}$ & $\theta_{4}$ & $t$ & $\lambda$ & $\mu-\frac{\theta_{3}%
}{\lambda-t}$\\ \hline
$\kappa_{4}$ & $\theta_{0}+\theta_{4}$ & $\theta_{1}$ & $\theta_{2}$ &
$\theta_{3}$ & $-\theta_{4}$ & $t$ & $\lambda$ & $\mu$\\ \hline
\end{tabular}
\end{table} Among them $\kappa_0$ is due to Okamoto \cite{Okamoto1} while the others are classically known. These transformations $\kappa_{j}$ $(0\leq j\leq4)$, which
satisfy $\kappa_{j}\circ \kappa_{j}=Id$ (i.e. $\kappa_j^{-1}=\kappa_j$), generate the affine Weyl group of type
$D_{4}^{(1)}$:%
\begin{equation}
W(D_{4}^{(1)})=\left \langle \kappa_{0},\kappa_{1},\kappa_{2},\kappa_{3}%
,\kappa_{4}\right \rangle . \label{II-100}%
\end{equation}

Denote $\theta^{\mathbf{0}}:=(-\tfrac{1}{2},\tfrac{1}{2},\tfrac{1}{2}%
,\tfrac{1}{2},\tfrac{1}{2})$ which corresponds to PVI$(\frac{1}{8},\frac
{-1}{8},\frac{1}{8},\frac{3}{8})$. By Table 1 there exists $\kappa
^{\mathbf{n}}\in W(D_{4}^{(1)})$ such that
\begin{equation}
\theta^{\mathbf{n}}:=\left(  -\frac{1+\sum n_{k}}{2},n_{1}+\tfrac
{1}{2},n_{2}+\tfrac{1}{2},n_{3}+\tfrac{1}{2},n_{0}+\tfrac{1}{2}\right)
=\kappa^{\mathbf{n}}(\theta^{\mathbf{0}}). \label{II-105}
\end{equation}
Note that \begin{equation}\label{IiI-105}(\kappa
^{\mathbf{n}})^{-1}\in W(D_{4}^{(1)})\quad\text{and }\; \theta^{\mathbf{0}}= (\kappa
^{\mathbf{n}})^{-1}(\theta^{\mathbf{n}}). \end{equation}
Consequently, there exist two \emph{rational functions} $R^{\mathbf{n}}%
(\cdot,\cdot,\cdot)$ and $\tilde{R}^{\mathbf{n}}(\cdot,\cdot,\cdot)$ of three
independent variables with coefficients in $\mathbb{Q}$ such that for any
solution $(\lambda^{\mathbf{0}}(t),\mu^{\mathbf{0}}(t))$ of the Hamiltonian
system (\ref{aa}) with parameter $\theta^{\mathbf{0}}$, $(\lambda^{\mathbf{n}%
}(t),\mu^{\mathbf{n}}(t))$ given by%
\begin{equation}
\lambda^{\mathbf{n}}(t):=\kappa(\lambda^{\mathbf{0}})(t)=R^{\mathbf{n}%
}(\lambda^{\mathbf{0}}(t),\mu^{\mathbf{0}}(t),t), \label{II-128}%
\end{equation}%
\begin{equation}
\mu^{\mathbf{n}}(t):=\kappa(\mu^{\mathbf{0}})(t)=\tilde{R}^{\mathbf{n}%
}(\lambda^{\mathbf{0}}(t),\mu^{\mathbf{0}}(t),t), \label{II-128-0}%
\end{equation}
is a solution of the Hamiltonian system (\ref{aa}) with parameter
$\theta^{\mathbf{n}}$, or equivalently, $\lambda^{\mathbf{n}}(t)$ is a
solution of PVI with parameter (\ref{parameter}).

Remark that by (\ref{IiI-105}), there are also two \emph{rational functions} $\mathcal{R}^{\mathbf{n}}
(\cdot,\cdot,\cdot)$ and $\tilde{\mathcal{R}}^{\mathbf{n}}(\cdot,\cdot,\cdot)$ of three
independent variables with coefficients in $\mathbb{Q}$ such that the rational map (\ref{II-128})-(\ref{II-128-0}) is invertible in the following sense
\begin{equation}
\lambda^{\mathbf{0}}(t)=\mathcal{R}^{\mathbf{n}
}(\lambda^{\mathbf{n}}(t),\mu^{\mathbf{n}}(t),t),\quad \mu^{\mathbf{0}}(t)=\tilde{\mathcal{R}}^{\mathbf{n}
}(\lambda^{\mathbf{n}}(t),\mu^{\mathbf{n}}(t),t). \label{IiI-128}%
\end{equation}%

In the literature, there are also references treating
the B\"{a}cklund transformations as
biholomorphic transformations on the space of initial conditions for solutions of Painlev\'{e} equations; see e.g. \cite{ST,Takemura09}. In this paper, (\ref{II-128})-(\ref{IiI-128}) are enough for our following arguments and so we do not need to discuss the space of initial conditions.

\medskip

\noindent \textbf{Notation}: Let $p^{\mathbf{n}}(\tau)$ be a solution of the
elliptic form (\ref{124}) with parameter (\ref{parameter0}). We denote it by
$p_{r,s}^{\mathbf{n}}(\tau)$ (resp. $p_{k,\mathcal{C}}^{\mathbf{n}}(\tau)$) if it comes
from the solution $p_{r,s}^{\mathbf{0}}(\tau)$ (resp. $p_{k,\mathcal{C}}^{\mathbf{0}}(\tau)$) of EPVI$(\frac{1}{8},\frac{1}{8},\frac{1}{8},\frac{1}{8})$
via (\ref{II-128}), i.e.%
\begin{equation}
\frac{\wp(p_{r,s}^{\mathbf{n}}(\tau)|\tau)-e_{1}(\tau)}{e_{2}(\tau)-e_{1}%
(\tau)}=R^{\mathbf{n}}\left(  \frac{\wp(p_{r,s}^{\mathbf{0}}(\tau)|\tau
)-e_{1}(\tau)}{e_{2}(\tau)-e_{1}(\tau)},\mu_{r,s}^{\mathbf{0}}(t),t\right),
\label{comred}%
\end{equation}%
\begin{equation}
\frac{\wp(p_{k,\mathcal{C}}^{\mathbf{n}}(\tau)|\tau)-e_{1}(\tau)}{e_{2}(\tau)-e_{1}%
(\tau)}=R^{\mathbf{n}}\left(  \frac{\wp(p_{k,\mathcal{C}}^{\mathbf{0}}(\tau)|\tau
)-e_{1}(\tau)}{e_{2}(\tau)-e_{1}(\tau)},\mu_{k,\mathcal{C}}^{\mathbf{0}}(t),t\right)  .
\label{ncomred}%
\end{equation}
We use similar notations $\mu_{r,s}^{\mathbf{n}}(t)$ and $\mu_{k,\mathcal{C}}^{\mathbf{n}}(t)$ via (\ref{II-128-0}). Consequently, it follows from (\ref{IiI-128}) that
\begin{equation}
\frac{\wp(p_{r,s}^{\mathbf{0}}(\tau)|\tau)-e_{1}(\tau)}{e_{2}(\tau)-e_{1}%
(\tau)}=\mathcal{R}^{\mathbf{n}}\left(  \frac{\wp(p_{r,s}^{\mathbf{n}}(\tau)|\tau
)-e_{1}(\tau)}{e_{2}(\tau)-e_{1}(\tau)},\mu_{r,s}^{\mathbf{n}}(t),t\right),
\label{ciomred}%
\end{equation}%
\begin{equation}
\frac{\wp(p_{k,\mathcal{C}}^{\mathbf{0}}(\tau)|\tau)-e_{1}(\tau)}{e_{2}(\tau)-e_{1}%
(\tau)}=\mathcal{R}^{\mathbf{n}}\left(  \frac{\wp(p_{k,\mathcal{C}}^{\mathbf{n}}(\tau)|\tau
)-e_{1}(\tau)}{e_{2}(\tau)-e_{1}(\tau)},\mu_{k,\mathcal{C}}^{\mathbf{n}}(t),t\right)  .
\label{nciomred}%
\end{equation}

\begin{remark} Given $(r,s)\in\mathbb{C}^2\setminus\frac12\mathbb{Z}^2$, we write $Z=Z_{r,s}(\tau)$, $\wp=\wp(r+s\tau|\tau)$ and $\wp'=\wp'(r+s\tau|\tau)$ for convenience. Then Hitchin's formula (\ref{513-1}) gives
\[
\wp(p_{r,s}^{\mathbf{0}}(\tau)|\tau)=\wp+\frac{\wp^{\prime
}}{2Z }.
\]
Consequently, we see from (\ref{135}) that
\[
\mu_{r,s}^{\mathbf{0}}(t) =\frac{e_{2}(\tau)-e_{1}(\tau)}{2[  \wp(p_{r,s}^{\mathbf{0}}(\tau)|\tau)-\wp]  }
=\frac{(e_{2}(\tau)-e_{1}(\tau))Z}{\wp^{\prime}}. \]
Inserting these and $t=\frac{e_3(\tau)-e_1(\tau)}{e_2(\tau)-e_1(\tau)}$ into (\ref{comred}), we conclude that
\[\wp(p_{r,s}^{\mathbf{n}}(\tau)|\tau)=\Xi_{\mathbf{n}}(Z,\wp,\wp',e_1(\tau),e_2(\tau),
e_3(\tau)),\]
where $\Xi_{\mathbf{n}}$ is a rational function of six independent variables with coefficients in $\mathbb{Q}$.
\end{remark}
Our main results of this section are as follows, which indicate that the B\"{a}cklund transformation preserves the global monodromy data (or equivalently the monodromy representation) in both completely reducible and not completely reducible cases.

\begin{theorem}
[Completely reducible solutions]\label{thm-II-8} \

\begin{itemize}
\item[(1)] $p^{\mathbf{n}}(\tau)$ is a completely reducible solution if and
only if there exists $\left(  r,s\right)  \in \mathbb{C}^{2}\backslash \frac
{1}{2}\mathbb{Z}^{2}$ such that $p^{\mathbf{n}}(\tau)=p_{r,s}^{\mathbf{n}%
}(\tau)$. In this case, for any $\tau$ satisfying $p^{\mathbf{n}}%
(\tau)\not \in E_{\tau}[2]$, the monodromy of the associated GLE$(\mathbf{n}$, $ p^{\mathbf{n}}(\tau), A^{\mathbf{n}}(\tau), \tau)$ satisfies
(\ref{61.35}), i.e. the global monodromy data is precisely this $(r,s)$.

\item[(2)] $\wp(p_{r_{1},s_{1}}^{\mathbf{n}}(\tau)|\tau)\equiv \wp
(p_{r_{2},s_{2}}^{\mathbf{n}}(\tau)|\tau)\Longleftrightarrow(r_{1},s_{1})  \equiv \pm(r_{2},s_{2})  \operatorname{mod}$
$\mathbb{Z}^{2}$.
\end{itemize}
\end{theorem}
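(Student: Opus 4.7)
The plan is to reduce both (1) and (2) to the already-handled case $\mathbf{n}=\mathbf{0}$ (Theorem~\ref{thm5}) via the B\"{a}cklund correspondence (\ref{II-128})--(\ref{IiI-128}). The key conceptual input is that the B\"{a}cklund transformation $\kappa^{\mathbf{n}}$, being realized at the level of the associated linear ODE by a Schlesinger-type rational gauge transformation that shifts local exponents at the punctures $\frac{\omega_k}{2}$ by integer amounts, preserves the global monodromy representation $\rho:\pi_1(E_\tau\setminus\{\pm[p]\})\to SL(2,\mathbb{C})$ up to a common conjugation. Indeed, those integer shifts leave each local monodromy at $\frac{\omega_k}{2}$ equal to $I_2$ and the local monodromy at $\pm[p]$ equal to $-I_2$, so the pair $(\text{tr}\,N_1,\text{tr}\,N_2)$ and, in Case (a), the class $(r,s)\in(\mathbb{C}^2\setminus\tfrac12\mathbb{Z}^2)/{\sim}$ are invariants of $\kappa^{\mathbf{n}}$.

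Granting this, part (1) proceeds as follows. Given any solution $p^{\mathbf{n}}(\tau)$ of EPVI with parameter (\ref{parameter0}), apply $\mathcal{R}^{\mathbf{n}},\tilde{\mathcal{R}}^{\mathbf{n}}$ of (\ref{IiI-128}) to $(\lambda^{\mathbf{n}}(t),\mu^{\mathbf{n}}(t))$ to produce $(\lambda^{\mathbf{0}}(t),\mu^{\mathbf{0}}(t))$ solving the Hamiltonian system with parameter $\theta^{\mathbf{0}}$, hence $p^{\mathbf{0}}(\tau)$ solving EPVI$(\tfrac18,\tfrac18,\tfrac18,\tfrac18)$. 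By monodromy preservation, $p^{\mathbf{n}}$ is completely reducible iff $p^{\mathbf{0}}$ is, and both carry the same $(r,s)$. Theorem~\ref{thm5}(i) identifies $p^{\mathbf{0}}=p_{r,s}^{\mathbf{0}}$ for some $(r,s)\not\in\tfrac12\mathbb{Z}^2$ with monodromy (\ref{61.35}); pushing forward through (\ref{comred}) identifies $p^{\mathbf{n}}=p_{r,s}^{\mathbf{n}}$ and transfers (\ref{61.35}) verbatim. The ``if'' direction of (1) is the reverse construction applied to Theorem~\ref{thm5}(i).

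For part (2), ``$\Leftarrow$'' is immediate from (\ref{comred}) once one notes that, by Theorem~\ref{thm5}(ii) together with formula (\ref{mu}), both $\wp(p_{r,s}^{\mathbf{0}}(\tau)|\tau)$ and $\mu_{r,s}^{\mathbf{0}}(t)$ are invariant under $(r,s)\mapsto\pm(r,s)+\mathbb{Z}^2$. For ``$\Rightarrow$'', if $\wp(p_{r_1,s_1}^{\mathbf{n}}|\tau)\equiv\wp(p_{r_2,s_2}^{\mathbf{n}}|\tau)$ then $p_{r_1,s_1}^{\mathbf{n}}(\tau)\equiv\pm p_{r_2,s_2}^{\mathbf{n}}(\tau)\bmod\Lambda_\tau$; by the first Hamilton equation of (\ref{142-0}) the associated $A$'s agree up to the same sign, so by (\ref{GLE-invariant}) the two associated GLEs coincide, their monodromies coincide, and part (1) forces $(r_1,s_1)\sim(r_2,s_2)$.

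\textbf{Main obstacle.} The substantive step is the monodromy-preservation claim in the first paragraph. The cleanest route is to decompose $\kappa^{\mathbf{n}}$ into the generators $\kappa_0,\ldots,\kappa_4$ of $W(D_4^{(1)})$ and check invariance for each. For $\kappa_j$ with $j\in\{1,2,3,4\}$ this should reduce to the classical Schlesinger-transformation argument: the sign flip $\theta_j\mapsto -\theta_j$ at a single regular singular point on $\mathbb{CP}^1$ is implemented by a rational gauge on the ODE that conjugates $\rho$ while keeping the local monodromy eigenvalues $\{1,e^{2\pi i\theta_j}\}=\{1,-1\}$ unchanged, hence the local monodromy on $E_\tau$ stays $I_2$ after the 2-to-1 projection $z\mapsto\wp(z)$. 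Okamoto's generator $\kappa_0$ is the subtlest case; I would verify it directly by comparing the Hermite--Halphen representations (\ref{exp1}) on either side of the transformation and checking via (\ref{ca-rs}) that $c(\boldsymbol{a})$ and $\sum a_i-\sum\frac{n_k\omega_k}{2}$ transform compatibly so that the multipliers $(e^{-2\pi i s},e^{2\pi i r})$ along $\ell_1,\ell_2$ are preserved.
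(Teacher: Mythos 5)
Your overall architecture coincides with the paper's: reduce to $\mathbf{n}=\mathbf{0}$ via the B\"{a}cklund correspondence, invoke Theorem \ref{thm5}, and push the conclusion back through (\ref{comred})--(\ref{nciomred}); your deduction of (2) (via the Hamiltonian system, (\ref{GLE-invariant}) and part (1)) is essentially the argument the paper uses inside Theorem \ref{thm5}-(ii) and is fine. However, there is a genuine gap at the exact point you flag as the ``main obstacle'': the claim that $\kappa^{\mathbf{n}}$ preserves the monodromy representation is the entire content of the theorem, and you do not prove it. Your reduction to the generators does not help as stated, because $\kappa_1,\dots,\kappa_4$ fix $\lambda$ and therefore do not move the PVI solution at all; every nontrivial step of $\kappa^{\mathbf{n}}$ is carried by Okamoto's $\kappa_0$ (which genuinely changes $\lambda$ to $\lambda+\theta_0/\mu$ and is not a local Schlesinger gauge at a fixed singular point), and this is precisely the case you defer with ``I would verify it directly.'' The paper does not attempt this verification: it quotes Theorem 5.A of Inaba--Iwasaki--Saito (see also Boalch) asserting that the trace coordinates $\varkappa=(\varkappa_1,\varkappa_2,\varkappa_3)$ of the normal form (\ref{II-99}) are invariant under $W(D_4^{(1)})$, and then proves Lemma \ref{thm3}, which translates $(\varkappa_1,\varkappa_2,\varkappa_3)$ into $(\mathrm{tr}\,\rho(\ell_1),\mathrm{tr}\,\rho(\ell_2),\mathrm{tr}(\rho(\ell_1)^{-1}\rho(\ell_2)))=-(\varkappa_1,\varkappa_2,\varkappa_3)$ via the loop correspondence $(\ell_1,\ell_2)\mapsto(\gamma_2^{-1}\gamma_3^{-1},\gamma_1\gamma_3)$ and the identity $M_j^2=-I_2$.

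A second, smaller issue: you assert that the full representation is preserved up to a common conjugation and that therefore the class $(r,s)$ is an invariant. The paper only uses invariance of the three traces, and this suffices --- but the third trace $\varkappa_3$, i.e.\ $\mathrm{tr}(\rho(\ell_1)^{-1}\rho(\ell_2))=2\cos 2\pi(r+s)$, is indispensable: knowing only $\mathrm{tr}\,\rho(\ell_1)=2\cos 2\pi s$ and $\mathrm{tr}\,\rho(\ell_2)=2\cos 2\pi r$ leaves the ambiguity $(r,s)$ versus $(r,-s)$, which is not resolved by the equivalence $\sim$ of (\ref{rs-equivalent}). Your write-up never mentions the third trace, so even granting trace invariance your argument would only determine $(r,s)$ up to independent sign changes, which is strictly weaker than (\ref{61.35}) with ``precisely this $(r,s)$.'' The final step in the paper --- passing from the matched triple of traces to the exact form (\ref{61.35}) --- also uses Theorem \ref{thm0} and (\ref{aunique})--(\ref{aunique11}) to know that the completely reducible monodromy of GLE$(\mathbf{n},p^{\mathbf{n}}(\tau),A^{\mathbf{n}}(\tau),\tau)$ is diagonal with respect to the Hermite--Halphen solutions $y_{\pm\boldsymbol{a}}$, after which the traces pin down the diagonal entries up to $\sim$.
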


\begin{theorem}
[Not completely reducible solutions]\label{thm-II-8-0} \

\begin{itemize}
\item[(1)] $p^{\mathbf{n}}(\tau)$ is a not completely reducible solution if
and only if there exist $k\in \{0,1,2,3\}$ and $\mathcal{C}\in \mathbb{C}\cup \{
\infty \}$ such that $p^{\mathbf{n}}(\tau)=p_{k,\mathcal{C}}^{\mathbf{n}}%
(\tau)$. In this case, for any $\tau$ satisfying $p^{\mathbf{n}}%
(\tau)\not \in E_{\tau}[2]$, the monodromy of the associated GLE$(\mathbf{n},  p^{\mathbf{n}}(\tau), A^{\mathbf{n}}(\tau), \tau)$ satisfies
(\ref{inftn})-(\ref{inft}), i.e. the global monodromy data is precisely
$(2\varepsilon_{k,1},2\varepsilon_{k,2},\mathcal{C})$.

\item[(2)] $\wp(p_{k,\mathcal{C}_{1}}^{\mathbf{n}}(\tau)|\tau)\equiv
\wp(p_{k,\mathcal{C}_{2}}^{\mathbf{n}}(\tau)|\tau)$ if and only if
$\mathcal{C}_{1}=\mathcal{C}_{2}$.
\end{itemize}
\end{theorem}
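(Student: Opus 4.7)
The plan is to mirror the strategy used for Theorem \ref{thm-II-8} and reduce Theorem \ref{thm-II-8-0} to the $\mathbf{n}=\mathbf{0}$ classification from Section \ref{Hitchin-case} via the B\"{a}cklund transformation $\kappa^{\mathbf{n}}$ in (\ref{II-105}). The central ingredient is a monodromy-preservation statement: if $(\lambda^{\mathbf{0}},\mu^{\mathbf{0}})$ solves the Hamiltonian system (\ref{aa}) with parameter $\theta^{\mathbf{0}}$ and $(\lambda^{\mathbf{n}},\mu^{\mathbf{n}})=\kappa^{\mathbf{n}}(\lambda^{\mathbf{0}},\mu^{\mathbf{0}})$, then the associated GLE$(\mathbf{0},p^{\mathbf{0}},A^{\mathbf{0}},\tau)$ and GLE$(\mathbf{n},p^{\mathbf{n}},A^{\mathbf{n}},\tau)$ (with $p^{\mathbf{n}},A^{\mathbf{n}}$ read off from $\lambda^{\mathbf{n}},\mu^{\mathbf{n}}$ via (\ref{II-130}) and (\ref{bb})) have the same global monodromy data in the sense of Definition \ref{MD}. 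This rests on the classical fact that each elementary $\kappa_j$ corresponds to a rational gauge (Schlesinger for $j=1,2,3,4$, Okamoto for $j=0$) transformation of the underlying linear system that shifts local exponents by integers while preserving the monodromy representation; hence both the completely reducible/not completely reducible dichotomy and the precise data $(\text{tr}N_1,\text{tr}N_2,\mathcal{C})$ are invariants of $\kappa^{\mathbf{n}}$.

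Granting this, part (1) follows in two steps. For the forward direction, start with $p^{\mathbf{0}}_{k,\mathcal{C}}$, which by Propositions \ref{Prop-II-8}--\ref{Prop-II-9} is not completely reducible with monodromy data $(2\varepsilon_{k,1},2\varepsilon_{k,2},\mathcal{C})$; applying $\kappa^{\mathbf{n}}$ and the preservation lemma shows that $p^{\mathbf{n}}_{k,\mathcal{C}}$ defined by (\ref{ncomred}) is not completely reducible with the same data. For the converse, given a not completely reducible $p^{\mathbf{n}}$ whose monodromy data determines a unique $k\in\{0,1,2,3\}$ and $\mathcal{C}\in\mathbb{CP}^{1}$ via (\ref{Mono-211})--(\ref{trace00}), apply the inverse transformation $(\kappa^{\mathbf{n}})^{-1}$ provided by (\ref{IiI-128}); the preservation lemma guarantees that the resulting $p^{\mathbf{0}}$ is not completely reducible with the same data, and Theorem \ref{thm3.3} together with Propositions \ref{Prop-II-8}--\ref{Prop-II-9} then force $p^{\mathbf{0}}=p^{\mathbf{0}}_{k,\mathcal{C}}$, so that $p^{\mathbf{n}}=p^{\mathbf{n}}_{k,\mathcal{C}}$ by the notational convention.

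Part (2) is then a short consequence of (1) combined with the $\mathbf{n}=\mathbf{0}$ uniqueness. The nontrivial direction follows because $\wp(p^{\mathbf{n}}_{k,\mathcal{C}_{1}}(\tau)|\tau)\equiv \wp(p^{\mathbf{n}}_{k,\mathcal{C}_{2}}(\tau)|\tau)$ implies (after using that the associated $\mu^{\mathbf{n}}$ is determined by $p^{\mathbf{n}}$ through the Hamiltonian system, so the triples $(\lambda^{\mathbf{n}}_{k,\mathcal{C}_j},\mu^{\mathbf{n}}_{k,\mathcal{C}_j},t)$ coincide) via the invertibility formula (\ref{nciomred}) that $\wp(p^{\mathbf{0}}_{k,\mathcal{C}_{1}}(\tau)|\tau)\equiv \wp(p^{\mathbf{0}}_{k,\mathcal{C}_{2}}(\tau)|\tau)$, and then (\ref{uni-rec}) yields $\mathcal{C}_{1}=\mathcal{C}_{2}$.

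The principal obstacle is the monodromy-preservation lemma itself. Although its Fuchsian-on-$\mathbb{CP}^{1}$ analogue is classical, transferring it cleanly to the GLE framework on $E_{\tau}$ requires careful bookkeeping of how the rational gauge associated with each $\kappa_j$ interacts with the local exponents $-n_k,n_k+1$ at $\omega_k/2$ and with the apparent singularities $\pm p$ appearing in (\ref{potential1}); in particular one must check that the new $A^{\mathbf{n}}(\tau)$ produced by the B\"{a}cklund image of $(p^{\mathbf{0}}(\tau),A^{\mathbf{0}}(\tau))$ indeed agrees with the one determined by the isomonodromy Hamiltonian system (\ref{142-0}), so that Theorem 3.A applies. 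For the not completely reducible case one must furthermore verify that the non-semisimple Jordan structure of the monodromy matrices in (\ref{Mono-211}), and in particular the precise value of $\mathcal{C}\in\mathbb{CP}^{1}$, survives this gauge change rather than being trivialized or renormalized; this last point is what ensures that the data $(2\varepsilon_{k,1},2\varepsilon_{k,2},\mathcal{C})$ is transported faithfully through $\kappa^{\mathbf{n}}$.
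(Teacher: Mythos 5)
Your overall architecture matches the paper's: reduce to the $\mathbf{n}=\mathbf{0}$ classification of Section \ref{Hitchin-case} via $\kappa^{\mathbf{n}}$, and derive part (2) from the invertibility of the rational map (\ref{comred})--(\ref{nciomred}) together with (\ref{uni-rec}). Part (2) as you argue it is essentially the paper's argument and is fine.

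However, there is a genuine gap in part (1), and it sits exactly at the point you yourself flag as ``the principal obstacle'': the claim that $\kappa^{\mathbf{n}}$ transports the full data $(2\varepsilon_{k,1},2\varepsilon_{k,2},\mathcal{C})$, and in particular the value of $\mathcal{C}$. The invariance result actually available (Theorem 5.A of Inaba--Iwasaki--Saito and Boalch, used in the paper via Lemma \ref{thm3}) only asserts invariance of the trace coordinates $\varkappa=(\varkappa_1,\varkappa_2,\varkappa_3)$. For an irreducible $\mathbb{CP}^1$-representation these traces determine the representation up to conjugation, which is why the completely reducible case (Theorem \ref{thm-II-8}) goes through directly; but the not completely reducible case of the torus representation corresponds precisely to the \emph{reducible} case on $\mathbb{CP}^1$, where traces do not determine the conjugacy class and in particular cannot see $\mathcal{C}$. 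Your appeal to ``the classical fact'' that each $\kappa_j$ is a rational gauge transformation preserving the monodromy representation does not close this: $\kappa_0$ (Okamoto's transformation) is not an elementary gauge change, and its realization via Fourier--Laplace/middle convolution is only well-controlled on irreducible representations. So the statement that $\mathcal{C}$ survives is exactly what must be proved, and your proposal defers it rather than proving it.

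The paper closes this gap by a degeneration argument that your proposal does not contain: it first uses Theorem \ref{comp-non} to realize $p_{k,\mathcal{C}}^{\mathbf{0}}$ (hence, after applying (\ref{II-128})--(\ref{ncomred}), also $p_{k,\mathcal{C}}^{\mathbf{n}}$) as the limit of completely reducible solutions $p_{r,s}^{\mathbf{n}}$ with $(r,s)$ tending to the relevant half-period point along the line encoded by $\mathcal{C}$ (e.g.\ $(r,s)=(\tfrac12-\mathcal{C}s,s)$, $s\to 0$, for $k=1$). It then computes $\mathcal{C}=\chi_2/\chi_1$ for the limiting GLE directly from $\chi_j=\int_z^{z+\omega_j}\Phi_e(\xi)^{-1}d\xi$, using the convergence $\Phi_{e,s}\to\Phi_e$, the vanishing of the Wronskian $W(s)\to 0$, and the explicit limit
\begin{equation*}
\frac{\chi_2}{\chi_1}=\lim_{s\to 0}\frac{4\pi i\left(\tfrac12-\mathcal{C}s\right)-2\pi i}{-4\pi i s}=\mathcal{C}.
\end{equation*}
Some such quantitative argument (or an actual proof that the representation, not merely its traces, is preserved in the reducible case) is indispensable; without it your proof establishes only that $k$ is transported correctly, not that $\mathcal{C}$ is.
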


The rest of this section is devoted to the proofs of these theorems. First we
note that by applying the gauge transformation%
\begin{equation}
f(x)=\phi(x)F(x)\text{ \ with \ }\phi(x)=(x-\lambda)x^{\frac{\theta_{1}}{2}%
}(x-1)^{\frac{\theta_{2}}{2}}(x-t)^{\frac{\theta_{3}}{2}}, \label{II-98}%
\end{equation}
equation (\ref{90}) is normalized into a new Fuchsian ODE%
\begin{equation}
\frac{d^{2}F}{dx^{2}}+P_{1}(x)\frac{dF}{dx}+P_{2}(x)F=0, \label{II-99}%
\end{equation}
where
\[
P_{1}=p_{1}+2\frac{\phi^{\prime}}{\phi},\text{ \  \ }P_{2}=p_{2}+\frac
{\phi^{\prime}}{\phi}p_{1}+\frac{\phi^{\prime \prime}}{\phi}.
\]
Clearly the Riemann scheme of (\ref{II-99}) is%
\begin{equation}
\left(
\begin{array}
[c]{ccccc}%
0 & 1 & t & \lambda & \infty \\
-\frac{\theta_{1}}{2} & -\frac{\theta_{2}}{2} & -\frac{\theta_{3}}{2} & -1 &
\frac{3-\theta_{4}}{2}\\
\frac{\theta_{1}}{2} & \frac{\theta_{2}}{2} & \frac{\theta_{3}}{2} & 1 &
\frac{3+\theta_{4}}{2}%
\end{array}
\right)  , \label{91-1}%
\end{equation}
and $\lambda$ is still an apparent singularity of (\ref{II-99}). As in
\cite{Inaba-Iwasaki-Saito}, equation (\ref{II-99}) is called the
\textit{normal form} of (\ref{90}). By (\ref{91-1}) it is easy to see that the
normal form (\ref{II-99}) has its monodromy group contained in
$SL(2,\mathbb{C})$, which is an important advantage comparing to (\ref{90}).

We proceed to the monodromy representation. Take the base point $x_{0}%
=\frac{\wp(q_{0})-e_{1}}{e_{2}-e_{1}}\not \in \{0,1,t,\infty \}$ and let
$\gamma_{j}\in \pi_{1}(\mathbb{C}\backslash \{0,1,t\},x_{0})$ be a simple loop
encircling the singular point $0$ for $j=1$, $1$ for $j=2$, $t$ for $j=3$
respectively in the counterclockwise direction, and $\gamma_{4}$ be a simple
loop around $\infty$ clockwise such that%
\[
\gamma_{1}\gamma_{2}\gamma_{3}=\gamma_{4}^{-1}\quad\text{in}\;\pi_{1}(\mathbb{C}\backslash \{0,1,t\},x_{0}).
\]
Of course we require that all these loops do not intersect except at the base
point $x_{0}$. Let $M_{j}$ be the monodromy matrix along the loop $\gamma_{j}$
with respect to any fixed fundamental system of solutions $(F_{1}%
(x),F_{2}(x))$ of (\ref{II-99}). Then $\det M_{j}=1$, namely
$M_{j}\in SL(2,\mathbb{C})$ for all $j$. Define%
\begin{equation}
\varkappa_{1}:=\text{tr}(M_{2}M_{3}),\text{ \ }\varkappa_{2}:=\text{tr}%
(M_{1}M_{3}),\text{ \ }\varkappa_{3}:=\text{tr}(M_{1}M_{2}). \label{II-102}%
\end{equation}
Then $\varkappa=(\varkappa_{1},\varkappa_{2},\varkappa_{3})\in \mathbb{C}^{3}$
is \textit{independent} of the choice of solutions, and
is referred to as \textit{global monodromy data} of (\ref{90}) (or
(\ref{II-99})) in \cite{Inaba-Iwasaki-Saito}. Clearly $\varkappa_{j}%
=\varkappa_{j}(\theta,\lambda,\mu,t)$ is uniquely determined by equation
(\ref{90}) itself and so is a function of $(\theta,\lambda,\mu,t)$ for all $j$. Then
each B\"{a}cklund transformation $\kappa \in W(D_{4}^{(1)})$ induces a
transformation (still denoted by $\kappa$) from $\mathbb{C}^{3}$ to
$\mathbb{C}^{3}$:%
\begin{equation}
\kappa(\varkappa_{j}):=\varkappa_{j}(\kappa(\theta),\kappa(\lambda),\kappa
(\mu),t),\text{ \ }j=1,2,3. \label{II-103}%
\end{equation}
We recall an important result from \cite{Inaba-Iwasaki-Saito}; see also
\cite{Boalch} for a different proof.\medskip

\noindent \textbf{Theorem 5.A. \cite{Inaba-Iwasaki-Saito,Boalch} }\emph{The
global monodromy data }$\varkappa=(\varkappa_{1},\varkappa_{2},\varkappa_{3}%
)$\emph{ is invariant under the B\"{a}cklund transformations }$W(D_{4}^{(1)}%
)$\emph{. Namely for any B\"{a}cklund transformation }$\kappa \in W(D_{4}^{(1)}%
)$\emph{, }$\kappa(\varkappa_{j})=\varkappa_{j}$\emph{ for }$j=1,2,3$%
\emph{.\medskip}

Theorem 5.A can be also applied to GLE($\mathbf{n},$ $p,$ $A,$ $\tau$). Consider transformations as in \cite{Chen-Kuo-Lin}
\begin{equation}
x=\frac{\wp(z)-e_{1}}{e_{2}-e_{1}},\text{ \ }t=\frac{e_{3}-e_{1}}{e_{2}-e_{1}%
},\text{ \ }\lambda=\frac{\wp(p)-e_{1}}{e_{2}-e_{1}}, \label{II-104-0}%
\end{equation}
and%
\begin{equation}
(x-\lambda)^{-\frac{1}{2}}x^{-\frac{n_{1}}{2}}(x-1)^{-\frac{n_{2}}{2}%
}(x-t)^{-\frac{n_{3}}{2}}f(x)=y(z). \label{II-104}%
\end{equation}
Then $y(z)$ solves GLE$(\mathbf{n},p,A,\tau)$ if and only if $f(x)$ satisfies the Fuchsian
ODE (\ref{90}) on $\mathbb{CP}^{1}$ with parameter $\theta=\theta^{\mathbf{n}%
}$, where $\mu$ in (\ref{97}) is given by%
\begin{equation}
\mu=\frac{1}{8}\frac{\mathfrak{p}^{\prime}(\lambda)}{\mathfrak{p}(\lambda
)}+\frac{A\wp^{\prime}(p)}{(e_{2}-e_{1})^{2}\mathfrak{p}(\lambda)}+\frac
{n_{1}}{2\lambda}+\frac{n_{2}}{2(\lambda-1)}+\frac{n_{3}}{2(\lambda-t)},
\label{II-106}%
\end{equation}%
\begin{equation}
\text{where}\quad\mathfrak{p}(\lambda)=4\lambda(\lambda-1)(\lambda-t), \label{II-107}%
\end{equation}
and $K=K(\lambda,\mu,t)$ is given by (\ref{98}). Note that $\pm p\not \in
E_{\tau}[2]$ are apparent singularities of GLE$(\mathbf{n},p,A,\tau)$ is
equivalent to that $\lambda \not \in \{0,1,t,\infty \}$ is an apparent
singularity of (\ref{90}). See \cite[Theorem 4.1]{Chen-Kuo-Lin} for the proof.

By (\ref{II-105}), (\ref{II-98}) and (\ref{II-104}), we let
\begin{equation}
y(z)=\psi(x)F(x)\text{ \ with \ }\psi(x)=(x-\lambda)^{\frac{1}{2}}x^{\frac
{1}{4}}(x-1)^{\frac{1}{4}}(x-t)^{\frac{1}{4}}. \label{II-108}%
\end{equation}
Then the above argument shows that $y(z)$ is a solution to GLE$(\mathbf{n}%
,p,A,\tau)$ if and only if $F(x)$ satisfies the normal form (\ref{II-99}).

\begin{remark}
\label{loop}Recall the definition of $\gamma_{j}\in \pi_{1}(\mathbb{C}%
\backslash \{0,1,t\},x_{0})$. Under the transformation (\ref{II-104-0}), it is
easy to see that the fundamental cycle $\ell_{1}$ (resp. $\ell_{2}$) of $E_{\tau}$ is mapped to a simple loop in
$\pi_{1}(\mathbb{C}\backslash \{0,1,t\},x_{0})$ which separates $\{1,t\}$ from
$\{0,\infty \}$ (resp. separates $\{0,t\}$ from $\{1,\infty \}$), so $(\ell
_{1},\ell_{2})$ must be mapped to one of%
\[
(\gamma_{2}^{-1}\gamma_{3}^{-1},\gamma_{1}\gamma_{3}),(\gamma_{3}\gamma
_{2},\gamma_{3}^{-1}\gamma_{1}^{-1}),(\gamma_{2}\gamma_{3},\gamma_{3}%
\gamma_{1}),(\gamma_{3}^{-1}\gamma_{2}^{-1},\gamma_{1}^{-1}\gamma_{3}^{-1}).
\]
In this paper, by letting the base point $q_{0}$ lie inside the parallelogram
with vertices $\{0,\frac{-\omega_{1}}{2},\frac{-\omega_{2}}{2},\frac
{-\omega_{3}}{2}\}$, we can always assume that $(\ell_{1},\ell_{2})$ is
mapped to $(\gamma_{2}^{-1}\gamma_{3}^{-1},$ $\gamma_{1}\gamma_{3})$.
\end{remark}

Recalling the global monodromy data $\varkappa=(\varkappa_{1},\varkappa
_{2},\varkappa_{3})$ of the normal form (\ref{II-99}), we have the following
important result.

\begin{lemma}
\label{thm3}
\[
\mathit{tr}\rho(\ell_{1})=-\mathit{tr}(M_{2}M_{3})=-\varkappa_{1},
\]%
\[
\mathit{tr}\rho(\ell_{2})=-\mathit{tr}(M_{1}M_{3})=-\varkappa_{2},
\]%
\[
\mathit{tr(}\rho(\ell_{1})^{-1}\rho(\ell_{2}))=-\mathit{tr}(M_{1}%
M_{2})=-\varkappa_{3}.
\]

\end{lemma}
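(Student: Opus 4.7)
The plan is to exploit the gauge transformation $y(z) = \psi(x)F(x)$ in (II-108) with $\psi(x) = (x-\lambda)^{1/2}x^{1/4}(x-1)^{1/4}(x-t)^{1/4}$, which relates the solutions of GLE$(\mathbf{n},p,A,\tau)$ upstairs on $E_\tau$ to those of the normal form (II-99) downstairs on $\mathbb{CP}^1$. Along each simple counterclockwise loop $\gamma_j$ around $0,1,t$, the gauge factor $\psi$ picks up a multiplicative phase from a single fractional power $(\cdot)^{1/4}$, giving the phase $i$ for each of $\gamma_1,\gamma_2,\gamma_3$. Combining with the identification from Remark \ref{loop}, that $(\ell_1,\ell_2)$ is mapped to $(\gamma_2^{-1}\gamma_3^{-1},\gamma_1\gamma_3)$, I find
\[
\rho(\ell_1) \;=\; (-i)(-i)\,M_3^{-1}M_2^{-1} \;=\; -M_3^{-1}M_2^{-1},
\qquad
\rho(\ell_2) \;=\; (i)(i)\,M_3 M_1 \;=\; -M_3 M_1.
\]

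The key observation is then the following: for $\theta=\theta^{\mathbf{n}}$ in (II-105), each $\theta_k$ ($k=1,2,3,4$) equals $n_k+\tfrac12$ (resp.\ $n_0+\tfrac12$), a half-integer. Inspecting the Riemann scheme (91-1), the eigenvalues of $M_j\in SL(2,\mathbb{C})$ at each of $0,1,t,\infty$ are $\pm i\,(-1)^{n_\bullet}$, so $\mathrm{tr}(M_j)=0$ and $\det(M_j)=1$. The Cayley--Hamilton theorem then forces
\[
M_j^2 \;=\; -I_2, \qquad \text{equivalently} \quad M_j^{-1} \;=\; -M_j, \qquad j=1,2,3,4.
\]
This $(-I_2)$-involutivity is exactly what is needed to convert inverse products into products.

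Using $M_j^{-1}=-M_j$, I rewrite $\rho(\ell_1) = -(-M_3)(-M_2) = -M_3 M_2$. Taking traces immediately gives
\[
\mathrm{tr}\,\rho(\ell_1) = -\mathrm{tr}(M_3 M_2) = -\mathrm{tr}(M_2 M_3) = -\varkappa_1,
\qquad
\mathrm{tr}\,\rho(\ell_2) = -\mathrm{tr}(M_3 M_1) = -\mathrm{tr}(M_1 M_3) = -\varkappa_2.
\]
For the third identity I compute $\rho(\ell_1)^{-1} = (-M_3 M_2)^{-1} = -M_2^{-1}M_3^{-1} = -(-M_2)(-M_3) = -M_2 M_3$ and then
\[
\rho(\ell_1)^{-1}\rho(\ell_2) \;=\; (-M_2 M_3)(-M_3 M_1) \;=\; M_2\,M_3^{2}\,M_1 \;=\; -M_2 M_1,
\]
so $\mathrm{tr}\bigl(\rho(\ell_1)^{-1}\rho(\ell_2)\bigr) = -\mathrm{tr}(M_2 M_1) = -\mathrm{tr}(M_1 M_2) = -\varkappa_3$.

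The main conceptual obstacle is not any single computation but the careful bookkeeping of two effects at once: the multivalued phases of $\psi$ along each piece of the projected loop, and the correct composition order of the $M_j$'s. Once these are aligned with the Remark \ref{loop} identification, the proof reduces to the simple but crucial algebraic fact $M_j^2=-I_2$. A consistency check that the plan is coherent is that $\rho(\ell_1)\rho(\ell_2) = \rho(\ell_2)\rho(\ell_1)$ must hold in the GLE monodromy (by (\ref{eq-Scommute})); this commutativity is not automatic from the above formulas and in fact pins down the relation $M_3^{-1}M_2^{-1}M_3 M_1 = M_3 M_1 M_3^{-1}M_2^{-1}$, which is a genuine constraint inherited from $\pi_1(E_\tau\setminus\{\pm p\})$ on the Fuchsian monodromy downstairs.
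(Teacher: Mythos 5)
Your argument is correct and follows essentially the same route as the paper: the gauge factor $\psi$ contributes the overall sign $-1$ along the images $\gamma_2^{-1}\gamma_3^{-1}$ and $\gamma_1\gamma_3$ of $\ell_1,\ell_2$, and the half-odd-integer exponents $\pm\theta_j/2$ force $M_j^2=-I_2$, which together with $\mathrm{tr}(M^{-1})=\mathrm{tr}(M)$ on $SL(2,\mathbb{C})$ yields the three trace identities. The only cosmetic differences are that you obtain $M_j^2=-I_2$ via Cayley--Hamilton rather than explicit diagonalization and use $M_j^{-1}=-M_j$ to clear inverses early; your reversed ordering of the $M_j$'s in $\rho(\ell_j)$ is a composition convention that does not affect any of the traces.
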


\begin{proof}
Let $(y_{1}(z),y_{2}(z))$ be any fundamental system of solutions to
GLE$(\mathbf{n}$, $p,A, \tau)$. Define a fundamental system of solutions
$(F_{1}(x),F_{2}(x))$ of (\ref{II-99}) via $(y_{1}(z),y_{2}(z))$ and
(\ref{II-108}). Recall the notation $N_{j}=\rho(\ell_{j})$. Under the
transformation (\ref{II-104-0}), it follows from Remark \ref{loop} that
$(\ell_{1},\ell_{2})$ is mapped to $(\gamma_{2}^{-1}\gamma_{3}^{-1},\gamma
_{1}\gamma_{3})$. Then{\allowdisplaybreaks%
\begin{align*}
N_{1}%
\begin{pmatrix}
y_{1}(z)\\
y_{2}(z)
\end{pmatrix}
&  =\ell_{1}^{\ast}%
\begin{pmatrix}
y_{1}(z)\\
y_{2}(z)
\end{pmatrix}
=\left(  \gamma_{2}^{-1}\gamma_{3}^{-1}\right)  ^{\ast}\psi(x)%
\begin{pmatrix}
F_{1}(x)\\
F_{2}(x)
\end{pmatrix}
\\
&  =-\psi(x)M_{2}^{-1}M_{3}^{-1}%
\begin{pmatrix}
F_{1}(x)\\
F_{2}(x)
\end{pmatrix} =-M_{2}^{-1}M_{3}^{-1}%
\begin{pmatrix}
y_{1}(z)\\
y_{2}(z)
\end{pmatrix},
\end{align*}}%
and similarly,
\[
N_{2}%
\begin{pmatrix}
y_{1}(z)\\
y_{2}(z)
\end{pmatrix}
=\ell_{2}^{\ast}%
\begin{pmatrix}
y_{1}(z)\\
y_{2}(z)
\end{pmatrix}
=-M_{1}M_{3}%
\begin{pmatrix}
y_{1}(z)\\
y_{2}(z)
\end{pmatrix}
,
\]
where the minus sign comes from the analytic continuation of $\psi
(x)$.\ Therefore, $N_{1}=-$ $M_{2}^{-1}M_{3}^{-1}$ and $N_{2}=-M_{1}M_{3}$.
Since $M_{j}\in SL(2,\mathbb{C})$, we have
\[
\text{tr}(M_{2}^{-1}M_{3}^{-1})=\text{tr}((M_{2}M_{3})^{-1})=\text{tr}%
(M_{2}M_{3})=\varkappa_{1},
\]
which proves tr$N_{1}=-\varkappa_{1}$ and similarly tr$N_{2}=-$tr$(M_{1}%
M_{3})=-\varkappa_{2}$.

On the other hand, recall (\ref{II-105}) that $\theta_{j}=n_{j}+\tfrac{1}{2}$
with $n_{j}\in \mathbb{Z}_{\geq0}$ for $j=1,2,3$, so (\ref{91-1}) implies the
existence of inverse matrices $P_{j}$ such that%
\[
M_{j}=P_{j}^{-1}%
\begin{pmatrix}
e^{-\pi i\theta_{j}} & 0\\
0 & e^{\pi i\theta_{j}}%
\end{pmatrix}
P_{j}=(-1)^{n_{j}}P_{j}^{-1}%
\begin{pmatrix}
-i & 0\\
0 & i
\end{pmatrix}
P_{j},
\]
which infers $M_{j}^{2}=-I_{2}$. Therefore,%
\begin{align*}
\text{tr}(N_{1}^{-1}N_{2})  &  =\text{tr}(M_{3}M_{2}M_{1}M_{3})=\text{tr}%
(M_{3}^{2}M_{2}M_{1})\\
&  =-\text{tr}(M_{2}M_{1})=-\text{tr}(M_{1}M_{2})=-\varkappa_{3}.
\end{align*}
The proof is complete.
\end{proof}

We are in the position to prove Theorems \ref{thm-II-8}-\ref{thm-II-8-0}.

\begin{proof}
[Proof of Theorems \ref{thm-II-8}-\ref{thm-II-8-0}]First, the assertions (2) of these two theorems follow directly from
Theorem \ref{thm5}-(ii), (\ref{uni-rec}) and (\ref{comred})-(\ref{nciomred}) (i.e. the invertibility of the B\"{a}cklund transformation implies the invertibility of the associated rational map).

Suppose $p^{\mathbf{n}}(\tau)$ is a solution of the elliptic form (\ref{124})
with parameter (\ref{parameter0}), and $p^{\mathbf{0}}(\tau)$ is the
corresponding solution of the elliptic form (\ref{hit}) such that under the
B\"{a}cklund transformation $\kappa^{\mathbf{n}}$, $p^{\mathbf{0}}(\tau)$ is
transformed to $p^{\mathbf{n}}(\tau)$. By Theorem 5.A and Lemma \ref{thm3},
the associated GLE$(\mathbf{n},p^{\mathbf{n}}(\tau),A^{\mathbf{n}}(\tau
),\tau)$ and GLE$(\mathbf{0},p^{\mathbf{0}}(\tau),A^{\mathbf{0}}(\tau),\tau)$
have the same $($tr$\rho(\ell_{1}),$tr$\rho(\ell_{2}))$. Together with
Corollary \ref{coro4},\ we conclude that $p^{\mathbf{n}}(\tau)$ is a
completely reducible solution (resp. not completely reducible) if and only if
$p^{\mathbf{0}}(\tau)$ is a completely reducible solution (resp. not
completely reducible).

Now we prove Theorem \ref{thm-II-8}-(1). Let $p^{\mathbf{n}}(\tau)$ be a
completely reducible solution, then so does $p^{\mathbf{0}}(\tau)$. Applying
Theorem \ref{thm5}, there exists $\left(  r,s\right)  \in \mathbb{C}%
^{2}\backslash \frac{1}{2}\mathbb{Z}^{2}$ such that $p^{\mathbf{0}}%
(\tau)=p_{r,s}^{\mathbf{0}}(\tau)$ and the monodromy of GLE$(\mathbf{0}%
,p^{\mathbf{0}}(\tau)$, $A^{\mathbf{0}}(\tau),\tau)$ satisfies (\ref{61.35}),
which implies%
\begin{align}
&  (\text{tr}\rho(\ell_{1}),\;\text{tr}\rho(\ell_{2}),\;\text{tr}(\rho(\ell
_{1})^{-1}\rho(\ell_{2})))\label{trac}\\
&  =(2\cos2\pi s,\;2\cos2\pi r,\;2\cos2\pi(r+s)).\nonumber
\end{align}
Thus, $p^{\mathbf{n}}(\tau)=p_{r,s}^{\mathbf{n}}(\tau)$ and Lemma \ref{thm3}
implies that (\ref{trac}) holds for GLE$(\mathbf{n},$ $p^{\mathbf{n}}%
(\tau),A^{\mathbf{n}}(\tau),\tau)$. Consequently, it follows from Theorem
\ref{thm0} and (\ref{aunique})-(\ref{aunique11}) that the monodromy of GLE$(\mathbf{n}%
,p^{\mathbf{n}}(\tau),A^{\mathbf{n}}(\tau),\tau)$ satisfies (\ref{61.35}).
This proves Theorem \ref{thm-II-8}-(1).

Finally, we prove Theorem \ref{thm-II-8-0}-(1). Let $p^{\mathbf{n}}(\tau)$ be
a not completely reducible solution, then so does $p^{\mathbf{0}}(\tau)$. By
Theorem \ref{thm3.3} and Propositions \ref{Prop-II-8}-\ref{Prop-II-9}, there
exist $k\in \{0,1,2,3\}$ and $\mathcal{C}\in \mathbb{C}\cup \{ \infty \}$ such
that $p^{\mathbf{0}}(\tau)=p_{k,\mathcal{C}}^{\mathbf{0}}(\tau)$ and
\begin{equation}
(\text{tr}\rho(\ell_{1}),\;\text{tr}\rho(\ell_{2}))
=(2\varepsilon_{k,1},\;2\varepsilon_{k,2}). \label{trac-0}%
\end{equation}
for GLE$(\mathbf{0},p^{\mathbf{0}}(\tau),A^{\mathbf{0}}(\tau),\tau)$. Thus
$p^{\mathbf{n}}(\tau)=p_{k,\mathcal{C}}^{\mathbf{n}}(\tau)$ and (\ref{trac-0})
holds for GLE$(\mathbf{n},p^{\mathbf{n}}(\tau),A^{\mathbf{n}}(\tau),\tau)$.

It remains to prove that the monodromy of GLE$(\mathbf{n},p^{\mathbf{n}}(\tau),A^{\mathbf{n}}(\tau),\tau)$ satisfies (\ref{inftn})-(\ref{inft}),
i.e. the global monodromy data is precisely $(2\varepsilon_{k,1}%
,2\varepsilon_{k,2},\mathcal{C})$. Note that we only need to prove this assertion for {\it some $\tau$} because of the isomonodromic deformation.
We take $k=1$ and $\mathcal{C}\not =\infty$
for example, and all the other cases can be proved in the same way. By Theorem
\ref{comp-non} and (\ref{II-128})-(\ref{ncomred}), we easily obtain%
\[
\wp(p_{1,\mathcal{C}}^{\mathbf{n}}(\tau)|\tau)=\lim_{s\rightarrow0}%
\wp(p_{\frac{1}{2}-\mathcal{C}s,s}^{\mathbf{n}}(\tau)|\tau),
\]%
\[
\mu_{1,\mathcal{C}}^{\mathbf{n}}(t)=\lim_{s\rightarrow0}\mu_{\frac{1}%
{2}-\mathcal{C}s,s}^{\mathbf{n}}(t).
\]
Fix any $\tau$ such that $p_{1,\mathcal{C}}^{\mathbf{n}}(\tau)\not \in
E_{\tau}[2]$. By Remark \ref{identify} we may assume%
\[
p_{1,\mathcal{C}}^{\mathbf{n}}(\tau)=\lim_{s\rightarrow0}p_{\frac{1}%
{2}-\mathcal{C}s,s}^{\mathbf{n}}(\tau)
\]
and then it follows from (\ref{II-106}) that the corresponding
\[
A_{1,\mathcal{C}}^{\mathbf{n}}(\tau)=\lim_{s\rightarrow0}A_{\frac{1}%
{2}-\mathcal{C}s,s}^{\mathbf{n}}(\tau).
\]
In the rest of the proof, we omit $\mathbf{n}$, $\tau$ in the notations for
convenience. Thus the associated GLE$(\mathbf{n},p_{1,\mathcal{C}%
},A_{1,\mathcal{C}})$ is a limit of GLE$(\mathbf{n},p_{\frac{1}{2}%
-\mathcal{C}s,s},A_{\frac{1}{2}-\mathcal{C}s,s})$. Denote by $\Phi_{e}(z)$ and
$\Phi_{e,s}(z)$ respectively, to be their corresponding unique even elliptic
solution stated in Theorem 2.A. Then%
\begin{equation}
\Phi_{e}(z)=\lim_{s\rightarrow0}\Phi_{e,s}(z). \label{phi}%
\end{equation}
Recall Theorem \ref{thm0-1} that%
\begin{equation}
\chi_{j}:=\int_{z}^{z+\omega_{j}}\frac{1}{\Phi_{e}(\xi)}d\xi \not =%
\infty,\text{ \ }j=1,2 \label{phi1}%
\end{equation}
are well-defined and independent of $z$. We claim that%
\begin{equation}
{\chi_{2}}/{\chi_{1}}=\mathcal{C}. \label{phi2}%
\end{equation}
Once (\ref{phi2}) is proved, then Theorem \ref{thm0-1} and (\ref{trac-0}) imply
that the monodromy of GLE$(\mathbf{n},p_{1,\mathcal{C}},A_{1,\mathcal{C}})$
satisfies (\ref{inftn})-(\ref{inft}) with $k=1$, hence completes the proof of Theorem
\ref{thm-II-8-0}-(1).

To prove (\ref{phi2}), we apply Theorem \ref{thm0} and Theorem \ref{thm5}-(i)
to GLE$(\mathbf{n},$ $p_{\frac{1}{2}-\mathcal{C}s,s},A_{\frac{1}%
{2}-\mathcal{C}s,s})$ and denote the corresponding $y_{\pm \boldsymbol{a}}(z)$
by $y_{\pm \boldsymbol{a}(s)}(z)$, which gives%
\[
\ell_{1}^{\ast}%
\begin{pmatrix}
y_{\boldsymbol{a}(s)}(z)\\
y_{-\boldsymbol{a}(s)}(z)
\end{pmatrix}
=%
\begin{pmatrix}
e^{-2\pi is} & 0\\
0 & e^{2\pi is}%
\end{pmatrix}%
\begin{pmatrix}
y_{\boldsymbol{a}(s)}(z)\\
y_{-\boldsymbol{a}(s)}(z)
\end{pmatrix}
,
\]%
\[
\ell_{2}^{\ast}%
\begin{pmatrix}
y_{\boldsymbol{a}(s)}(z)\\
y_{-\boldsymbol{a}(s)}(z)
\end{pmatrix}
=%
\begin{pmatrix}
e^{2\pi i(\frac{1}{2}-\mathcal{C}s)} & 0\\
0 & e^{-2\pi i(\frac{1}{2}-\mathcal{C}s)}%
\end{pmatrix}%
\begin{pmatrix}
y_{\boldsymbol{a}(s)}(z)\\
y_{-\boldsymbol{a}(s)}(z)
\end{pmatrix}
.
\]
By (\ref{a-a1}) there exists a nonzero constant $c(s)$ such that%
\[
\Phi_{e,s}(z)=c(s)y_{\boldsymbol{a}(s)}(z)y_{-\boldsymbol{a}(s)}(z).
\]
It follows from (\ref{phi}) that up to a subsequence, $\lim_{s\rightarrow
0}c(s)=c_{0}\not \in \{0,\infty \}$. Let%
\[
W(s):=y_{\boldsymbol{a}(s)}^{\prime}(z)y_{-\boldsymbol{a}(s)}%
(z)-y_{\boldsymbol{a}(s)}(z)y_{-\boldsymbol{a}(s)}^{\prime}(z)
\]
be the Wronskian, which is a nonzero constant independent of $z$. Since
GLE$(\mathbf{n},p_{\frac{1}{2}-\mathcal{C}s,s},A_{\frac{1}{2}-\mathcal{C}%
s,s})$ converges to GLE$(\mathbf{n},p_{1,\mathcal{C}},A_{1,\mathcal{C}})$
whose monodromy is not completely reducible, we have
\begin{equation}
\lim_{s\rightarrow0}W(s)=0. \label{wron}%
\end{equation}
Define%
\[
f_{s}(z):=\frac{y_{\boldsymbol{a}(s)}(z)}{y_{-\boldsymbol{a}(s)}(z)}.
\]
Then $f_{s}(z)$ has no branch points and hence single-valued in $\mathbb{C}$,
which satisfies%
\[
f_{s}(z+1)=e^{-4\pi is}f_{s}(z),\text{ \ }f_{s}(z+\tau)=e^{4\pi i(\frac{1}%
{2}-\mathcal{C}s)}f_{s}(z).
\]
Furthermore, a direct computation gives%
\[
\frac{d}{dz}\ln f_{s}(z)=\frac{c(s)W(s)}{\Phi_{e,s}(z)},
\]
and so%
\[
e^{-4\pi is}=\frac{f_{s}(z+1)}{f_{s}(z)}=\exp \left(  c(s)W(s)\int_{z}%
^{z+1}\frac{1}{\Phi_{e,s}(\xi)}d\xi \right)  ,
\]%
\[
e^{4\pi i(\frac{1}{2}-\mathcal{C}s)}=\frac{f_{s}(z+\tau)}{f_{s}(z)}%
=\exp \left(  c(s)W(s)\int_{z}^{z+\tau}\frac{1}{\Phi_{e,s}(\xi)}d\xi \right)  .
\]
Therefore, there exist $m_{1},m_{2}\in \mathbb{Z}$ such that%
\[
\int_{z}^{z+1}\frac{1}{\Phi_{e,s}(\xi)}d\xi=\frac{-4\pi is+2\pi im_{1}%
}{c(s)W(s)},
\]%
\[
\int_{z}^{z+\tau}\frac{1}{\Phi_{e,s}(\xi)}d\xi=\frac{4\pi i(\frac{1}%
{2}-\mathcal{C}s)+2\pi im_{2}}{c(s)W(s)}.
\]
Together with (\ref{phi})-(\ref{phi1}), we have
\[
\lim_{s\rightarrow0}\frac{-4\pi is+2\pi im_{1}}{c(s)W(s)}=\chi_{1},\text{
}\lim_{s\rightarrow0}\frac{4\pi i(\frac{1}{2}-\mathcal{C}s)+2\pi im_{2}%
}{c(s)W(s)}=\chi_{2}.
\]
This, together with $\lim_{s\rightarrow0}c(s)=c_{0}\not \in \{0,\infty \}$
and (\ref{wron}), yields $(m_{1},m_{2})=(0,-1)$ and so%
\[
\frac{\chi_{2}}{\chi_{1}}=\lim_{s\rightarrow0}\frac{4\pi i(\frac{1}%
{2}-\mathcal{C}s)-2\pi i}{-4\pi is}=\mathcal{C}.
\]
This proves (\ref{phi2}). The proof is complete.
\end{proof}

\section{Proofs of Theorem \ref{thm1}}

\label{unique-u}

This section is devoted to proving Theorem \ref{thm1}.  In this section, we denote $N=\sum_{k}n_k+1$.
First we prove the uniqueness of GLE$(\mathbf{n}, p, A, \tau)$ with respect to the monodromy data.

\begin{proof}
[Proof of Theorem \ref{thm1}-(1)]Fix $\mathbf{n}$ and $\tau_{0}$. Suppose
GLE$(\mathbf{n}, p_{j}, A_{j}, \tau_{0})$, $j=1,2,$ have the same global
monodromy data.  Let $(p_{j}^{\mathbf{n}}(\tau),A_{j}^{\mathbf{n}}(\tau))$ be
the solution of the Hamiltonian system (\ref{142-0}) with initial data
$(p_{j}^{\mathbf{n}}(\tau_{0}),$ $A_{j}^{\mathbf{n}}(\tau_{0}))$ $=$ $\left(
p_{j},A_{j}\right)  $, $j=1,2$. Then $p_{j}^{\mathbf{n}}(\tau)$ are solutions
of the elliptic form (\ref{124}) with parameter (\ref{parameter0}). There are
two cases.

\textbf{Case 1}. The monodromies of GLE$(\mathbf{n}, p_{j}, A_{j}, \tau_{0})$ are completely reducible with the same global monodromy data
$(r_{j},s_{j})\in \mathbb{C}^{2}\backslash \frac{1}{2}\mathbb{Z}^{2}$ with
$(r_{1},s_{1})\sim(r_{2},s_{2})$. Then Theorem \ref{thm-II-8} implies
$p_{j}^{\mathbf{n}}(\tau)=p_{r_{j},s_{j}}^{\mathbf{n}}(\tau)$ and hence
$\wp(p_{1}^{\mathbf{n}}(\tau)|\tau)\equiv \wp(p_{2}^{\mathbf{n}}(\tau)|\tau)$.
In a small neighborhood $U$ of $\tau_{0}$ we may assume $p_{1}^{\mathbf{n}%
}(\tau)=\pm p_{2}^{\mathbf{n}}(\tau)+m_{1}+m_{2}\tau$ for some $m_{j}%
\in \mathbb{Z}$. Then it follows from the first equation of the Hamiltonian system (\ref{142-0}) that
$A_{1}^{\mathbf{n}}(\tau)=\pm A_{2}^{\mathbf{n}}(\tau)$ for $\tau \in U$. In
particular, these hold for $\tau_{0}$ and we conclude from
(\ref{GLE-invariant}) that GLE$(\mathbf{n,}$ $p_{1},$ $A_{1},$ $\tau_{0}%
)=$GLE$(\mathbf{n,}$ $p_{2},$ $A_{2},$ $\tau_{0})$.

\textbf{Case 2}. The monodromies of GLE$(\mathbf{n}, p_{j}, A_{j}, \tau_{0})$ are not completely reducible with the same global monodromy data
$(2\varepsilon_{k,1},2\varepsilon_{k,2},\mathcal{C})$. Thanks to Theorem
\ref{thm-II-8-0}, the same argument as Case 1 implies GLE$(\mathbf{n}, p_{1},
A_{1}, \tau_{0})=\text{GLE}(\mathbf{n}, p_{2}, A_{2}, \tau_{0})$.
\end{proof}

To prove Theorem \ref{thm1} for H$(\mathbf{n},B,\tau)$, we need to apply the relation between H$(\mathbf{n},B,\tau)$ and GLE$(\mathbf{n}, p, A, \tau)$ studied in \cite{Chen-Kuo-Lin}.

Fix any $\tau_{0}\in \mathbb{H}$ and $c_{0}^{2}\in\{\pm i\frac{2n_{0}+1}{2\pi}\}$. Then for any $h\in \mathbb{C}$, it was
proved in \cite{Chen-Kuo-Lin} that there exists a solution $p_{h}^{\mathbf{n}}(\tau)$ of the elliptic form (\ref{124}) with parameters (\ref{parameter0}) satisfying the following asymptotic
behavior%
\begin{equation}
p_{h}^{\mathbf{n}}(\tau)=c_{0}(\tau-\tau_{0})^{\frac{1}{2}}%
(1+h(\tau-\tau_{0})+O(\tau-\tau_{0})^{2})\text{ as }\tau
\rightarrow \tau_{0}. \label{515-5}%
\end{equation}
Recall Remark \ref{identify} that we identify the solutions $p_{h}^{\mathbf{n}}(\tau)$ and $-p_{h}^{\mathbf{n}}(\tau)$, so (\ref{515-5}) gives
two $1$-parameter families (one family is given by $c_{0}^{2}= i\frac{2n_{0}+1}{2\pi}$ and the other by $c_{0}^{2}= -i\frac{2n_{0}+1}{2\pi}$) of solutions of the elliptic form (\ref{124}) satisfying
$p_{h}^{\mathbf{n}}(\tau)\rightarrow0$ as $\tau\rightarrow
\tau_{0}$. Moreover, these two $1$-parameter families of solutions give all
solutions $p^{\mathbf{n}}(\tau)$ of the elliptic form (\ref{124}) such that
$p^{\mathbf{n}}(\tau_{0})=0$. See \cite[Section 3]{Chen-Kuo-Lin} for the proof.

By using (\ref{515-5}), we proved that the associated GLE$(\mathbf{n}
, p_{h}^{\mathbf{n}}(\tau), A(\tau), \tau)$ converges to either
H$(\mathbf{n}^{+},B_{0}, \tau_0)$ or H$(\mathbf{n}^{-}, B_{0}, \tau_0)$ for some $B_{0}$
$\in \mathbb{C}$ as $\tau \rightarrow \tau_{0}$ where $\mathbf{n}^{\pm}$\emph{
}$=(  n_{0}\pm1,n_{1},n_{2},n_{3})  $. More precisely, we
have

\begin{theorem} \cite{Chen-Kuo-Lin} \label{thm-6A} Let
$\tau_{0}\in \mathbb{H}$ and $p^{\mathbf{n}%
}(\tau)$ be a solution of the elliptic form
(\ref{124}) with parameters (\ref{parameter0}) such that $p^{\mathbf{n}}(\tau_{0})=0$. Then $p^{\mathbf{n}}(\tau)=\pm p_{h}^{\mathbf{n}}(\tau)$ for
some $h\in \mathbb{C}$. Furthermore, the associated
GLE$(\mathbf{n}, p^{\mathbf{n}}(\tau), A(\tau), \tau)$
converges to either H$(\mathbf{n}^{+},B_{0}, \tau_0)$ if $c_{0}%
^{2}=-i\frac{2n_{0}+1}{2\pi}$ or H$(\mathbf{n}^{-},B_{0}, \tau_0)$ if
$c_{0}^{2}=i\frac{2n_{0}+1}{2\pi}$. Here
\begin{equation}
B_{0}=2\pi ic_{0}^{2}\left(  4\pi i h-\eta_{1}(\tau_{0})\right)
-\sum_{k=1}^{3}n_{k}(n_{k}+1)e_{k}(\tau_{0}). \label{B}%
\end{equation}
\end{theorem}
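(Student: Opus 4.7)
The plan is to combine a Painlev\'{e}-type local analysis of the elliptic form \eqref{124} near $\tau_0$ with an explicit asymptotic computation of the potential $I_{\mathbf{n}}(z;p,A,\tau)$ as $\tau\to\tau_0$, and then read off the limiting Darboux--Treibich--Verdier potential and its constant $B_0$ from the $O(1)$ term. First, to classify all solutions $p^{\mathbf{n}}(\tau)$ with $p^{\mathbf{n}}(\tau_0)=0$, I would observe that since $\wp'(\omega_k/2|\tau)=0$ for $k=1,2,3$, the only singular contribution to the right-hand side of \eqref{124} near $p=0$ is $-\alpha_0/(4\pi^2)\cdot\wp'(p|\tau)\sim(2n_0+1)^2/(16\pi^2 p^3)$. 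Substituting the Puiseux ansatz $p(\tau)=c_0(\tau-\tau_0)^{1/2}\bigl(1+h(\tau-\tau_0)+h_2(\tau-\tau_0)^2+\cdots\bigr)$, the leading $(\tau-\tau_0)^{-3/2}$ balance yields the indicial equation $c_0^4=-(2n_0+1)^2/(4\pi^2)$, i.e.\ $c_0^2\in\{\pm i(2n_0+1)/(2\pi)\}$; the $(\tau-\tau_0)^{-1/2}$ balance is identically satisfied, so $h$ remains a free parameter while the higher coefficients $h_j$ are determined recursively. Convergence of this formal series is guaranteed by the Painlev\'{e} property: under \eqref{II-130}, $\wp(p(\tau)|\tau)$ is single-valued meromorphic near $\tau_0$ with a simple pole, whose Laurent data (residue and next coefficient) encodes precisely $(c_0,h)$, so that every such solution is one of $\pm p_h^{\mathbf{n}}(\tau)$.

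Next, I would determine the asymptotic behaviour of $A(\tau)$ and pass to the limit in the potential. Solving the first Hamilton equation of \eqref{142-0} gives $2A=\zeta(2p|\tau)-2p\eta_1(\tau)+4\pi i\,p'(\tau)$; combined with $\zeta(2p)=\tfrac{1}{2p}+O(p^3)$ and the asymptotic of $p'$ above, this yields the crucial limit $A(\tau)p(\tau)\to\beta:=\tfrac14+\pi ic_0^2$. Using the Taylor expansions
\begin{equation*}
\wp(z\pm p)=\wp(z)\pm p\wp'(z)+\tfrac{p^2}{2}\wp''(z)+\cdots,\qquad \zeta(z\pm p)=\zeta(z)\mp p\wp(z)\mp\tfrac{p^3}{6}\wp''(z)+\cdots,
\end{equation*}
the combination $\tfrac34[\wp(z+p)+\wp(z-p)]+A[\zeta(z+p)-\zeta(z-p)]$ has singular contributions that cancel precisely because of this balance, and its limit equals $(\tfrac32-2\beta)\wp(z|\tau_0)$. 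Adding the Lam\'{e} piece $n_0(n_0+1)\wp(z)$, the coefficient of $\wp(z|\tau_0)$ in the limit potential is $n_0(n_0+1)+\tfrac32-2\beta$; substituting $c_0^2=\pm i(2n_0+1)/(2\pi)$ one checks that this collapses either to $(n_0+1)(n_0+2)$ or to $(n_0-1)n_0$ depending on the sign, which matches the $\wp(z)$-coefficient of H$(\mathbf{n}^{+},B_0,\tau_0)$ or H$(\mathbf{n}^{-},B_0,\tau_0)$ respectively, with the pairing of signs stated in the theorem.

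Finally, the constant $B_0$ is extracted from \eqref{apparent-B} by expanding
\begin{equation*}
B=A^2-\zeta(2p)A-\tfrac34\wp(2p)-\sum_{k=0}^3 n_k(n_k+1)\wp(p+\tfrac{\omega_k}{2})
\end{equation*}
to order $O(1)$ as $\tau\to\tau_0$. The singular $(\tau-\tau_0)^{-1}$ contributions coming from $A^2$, $\zeta(2p)A$, $\tfrac34\wp(2p)$ and $n_0(n_0+1)\wp(p)$ cancel automatically by virtue of the leading relation $2Ap\to 2\beta$; the finite part then receives contributions from the subleading $O((\tau-\tau_0)^{1/2})$ corrections to $A$ and $p'$, which introduce the parameter $h$ and the quasi-period $\eta_1(\tau_0)$ through $\zeta(z)=z^{-1}-\tfrac{g_2}{60}z^3-\cdots$ and $\wp(z)=z^{-2}+\tfrac{g_2}{20}z^2+\cdots$, together with $-\sum_{k=1}^3 n_k(n_k+1)e_k(\tau_0)$ from the regular singular points. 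Assembling these terms yields \eqref{B}. I expect this last step to be the \emph{main technical obstacle}: extracting the finite $B_0$ demands the exact next-order coefficient $h$ in $p(\tau)$ and the matching $O((\tau-\tau_0)^{1/2})$ correction in $A(\tau)$, and the cancellations among the singular parts of $A^2$, $\zeta(2p)A$, and $\tfrac34\wp(2p)$ only close up once these subleading terms are tracked to this precision.
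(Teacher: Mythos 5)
The paper itself does not prove this statement --- it is imported from \cite{Chen-Kuo-Lin} with a pointer to Section 3 of that paper --- so I can only compare your proposal against what that argument must be; your plan (Puiseux/indicial analysis of the elliptic form at $p=0$ to get the two-parameter family $(c_0^2,h)$, the limit $A(\tau)p(\tau)\to\beta=\tfrac14+\pi i c_0^2$ from the first equation of (\ref{142-0}), term-by-term limits of the potential, and extraction of $B_0$ from (\ref{apparent-B})) is exactly the natural route, and I have checked that it does produce (\ref{B}): the $(\tau-\tau_0)^{-1}$ parts of $A^2-\zeta(2p)A-\tfrac34\wp(2p)-n_0(n_0+1)\wp(p)$ cancel and the finite part assembles to $2\pi i c_0^2(4\pi i h-\eta_1(\tau_0))-\sum_{k=1}^3 n_k(n_k+1)e_k(\tau_0)$. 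Two caveats on that part: the cancellation of the singular terms uses the indicial relation $c_0^4=-(2n_0+1)^2/(4\pi^2)$, not merely $Ap\to\beta$ as you assert; and in the classification step you still need to argue that a meromorphic $\wp(p(\tau)|\tau)$ with $p(\tau_0)=0$ is forced to have a \emph{simple} pole with residue $1/c_0^2$ for one of the two admissible $c_0^2$, and that the formal series with free $h$ actually converges (local existence) --- invoking the Painlev\'{e} property alone does not finish this.

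The one step I cannot accept as written is the sentence ``one checks that this ... matches ... with the pairing of signs stated in the theorem.'' Carry the check out: from (\ref{142-0}), $2A=\zeta(2p)-2p\eta_1+4\pi i p'$, so with $p\sim c_0(\tau-\tau_0)^{1/2}$ one gets $Ap\to\tfrac14+\pi i c_0^2$ and the limiting $\wp(z)$-coefficient is $n_0(n_0+1)+\tfrac32-2\beta=n_0(n_0+1)+1-2\pi i c_0^2$. For $c_0^2=+i\tfrac{2n_0+1}{2\pi}$ this equals $(n_0+1)(n_0+2)$, i.e.\ the limit is H$(\mathbf{n}^{+},\cdot)$, while for $c_0^2=-i\tfrac{2n_0+1}{2\pi}$ it equals $(n_0-1)n_0$, i.e.\ H$(\mathbf{n}^{-},\cdot)$ --- the \emph{opposite} of the pairing displayed in the statement. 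Since the same normalization of (\ref{142-0}) is what forces formula (\ref{B}) (flipping the sign of the $4\pi i p'$ term would change the sign of the $\eta_1$-term in (\ref{B})), you cannot have both your computation and the stated labels; either the two cases in the quoted statement are interchanged relative to the conventions of this paper, or a sign has been lost somewhere in your chain. Either way, this is precisely the kind of claim that must be exhibited explicitly rather than asserted, because it is the only place where the two branches $c_0^2=\pm i\tfrac{2n_0+1}{2\pi}$ are distinguished, and the proofs of Theorem \ref{thm1}-(2),(3) in Section \ref{unique-u} rely on knowing which branch produces which limit.
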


\begin{proof}
[{Proof of Theorem \ref{thm1}-(2)}]
Fix $\mathbf{n}$ and $\tau_{0}$. Suppose
H$(\mathbf{n}, B_{j}, \tau_{0})$, $j=1,2,$ have the same global
monodromy data. Our goal is to prove $B_1=B_2$.

Let $\mathbf{n}^{+}=(
n_{0}+1,n_{1},n_{2},n_{3})$ and $c_{0}^{2}=i\frac{2(
n_{0}+1)  +1}{2\pi}$.
Define $h_{j},$ $j=1,2$, by (\ref{B}) by replacing $B_{0}$ with
$B_{j}$ and consider the solutions $p_{h_{j}}^{\mathbf{n}^{+}}(\tau)$. By Theorem \ref{thm-6A}, the associated GLE$(\mathbf{n}^{+},
p_{h_{j}}^{\mathbf{n}^{+}}(\tau),A_{h_j}^{\mathbf{n}^{+}}( \tau), \tau)$ converges to H($\mathbf{n},B_{j},\tau_{0}$) as $\tau \rightarrow \tau_{0}$. The key step is to show that
\begin{align}\label{globalmono}
&\text{\it the global monodromy data of GLE$(\mathbf{n}^{+},
p_{h_{j}}^{\mathbf{n}^{+}}(\tau),A_{h_j}^{\mathbf{n}^{+}}( \tau), \tau)$}\\
&\text{\it and H$(\mathbf{n},B_{j},\tau_{0})$ are the same.}\nonumber
\end{align}
Once (\ref{globalmono}) is proved, then GLE$(\mathbf{n}^{+},
p_{h_{j}}^{\mathbf{n}^{+}}(\tau),A_{h_j}^{\mathbf{n}^{+}}( \tau), \tau)$, $j=1,2$, have the same global monodromy data and so Theorem \ref{thm1}-(1) yields that these two GLEs coincide, i.e. $\wp(p_{h_{1}}^{\mathbf{n}^{+}}(\tau)|\tau)\equiv \wp(p_{h_{2}}^{\mathbf{n}^{+}}(\tau)|\tau)$. From here and $p_{h_{j}}^{\mathbf{n}^{+}}(\tau_0)=0$ for $j=1,2$, we obtain
$p_{h_{1}}^{\mathbf{n}^{+}}(\tau)=\pm p_{h_{2}}^{\mathbf{n}^{+}}(\tau)$ near $\tau_0$.
This implies $h_1=h_2$ and so $B_1=B_2$.

We only need to prove (\ref{globalmono}) for $j=1$ and
in the following proof we write $(p_{h_{1}}^{\mathbf{n}^{+}}(\tau),A_{h_1}^{\mathbf{n}^{+}}( \tau))=(p(\tau),A(\tau))$ for convenience.

{\bf Case 1}. $p(\tau)=p_{r,s}^{\mathbf{n}^{+}}(\tau)$ for some $(r,s)\in\mathbb{C}^2\setminus\frac{1}{2}\mathbb{Z}^2$ is a completely reducible solution, i.e. the global monodromy data of GLE$(\mathbf{n}^{+}, p(\tau), A(\tau), \tau)$ is $(r,s)$.

Denote $\hat{N}=\sum n_k+2$. Then by Theorem \ref{thm0} and (\ref{aunique})-(\ref{aunique11}),
there exists $\boldsymbol{a}(\tau)=(a_{1}(\tau),\cdot \cdot \cdot,a_{\hat{N}}(\tau))$ satisfying
\begin{equation}\label{suma}
\sum_{i=1}^{\hat{N}}a_{i}(\tau)  -\sum_{k=1}^{3}\frac{n_{k}\omega_{k}%
}{2}=r+s\tau
\end{equation}
such that
{\allowdisplaybreaks
\begin{align}\label{ya}
y_{\boldsymbol{a}(\tau)  }(z)
=&\frac{e^{(r\eta_1(\tau)+s\eta_2(\tau))  z}
\prod_{i=1}^{\hat{N}}
\sigma(  z-a_{i}(\tau)|\tau)  }{\sigma(  z|\tau)^{n_{0}+2}
\prod_{k=1}^{3}
\sigma(  z-\frac{\omega_{k}}{2}|\tau)^{n_{k}}  }\\
&\times\frac{\sigma(z|\tau)}{\sqrt{\sigma(
z-p(\tau)|\tau)  \sigma (  z+p(\tau)|\tau)  }}\nonumber
\end{align}
}%
is a solution of GLE$(\mathbf{n}^{+}, p(\tau), A(\tau), \tau)$. By passing a subsequence, we
may assume
\begin{equation}\label{alimi}
\lim_{\tau \rightarrow \tau_{0}}\boldsymbol{a}(  \tau )  =\boldsymbol{a}
=(a_{1},\cdot \cdot \cdot,a_{\hat{N}})  \in E_{\tau}^{\hat{N}}.
\end{equation}
Then
\begin{equation}
\sum_{i=1}^{\hat{N}}a_{i}-\sum_{k=1}^{3}\frac{n_{k}\omega_{k}(
\tau_{0})  }{2}=r+s\tau_{0}, \label{la}%
\end{equation}
and $p(\tau)\to p(\tau_0)=0$ implies that
\begin{align}\label{ya-1}
&  y_{\boldsymbol{a}}(z)
:=\frac{e^{(r\eta_1(\tau_0)+s\eta_2(\tau_0))  z}
\prod_{i=1}^{\hat{N}}
\sigma(  z-a_{i}|\tau_0)  }{\sigma(  z|\tau_0)^{n_{0}+2}
\prod_{k=1}^{3}
\sigma(  z-\frac{\omega_{k}}{2}|\tau_0)^{n_{k}}  }
\end{align}
is a solution of H$(\mathbf{n}, B_1,\tau)$. Note that two of $a_{1},\cdot \cdot \cdot,a_{\hat{N}}$ must be $0$ since the local exponents of H$(\mathbf{n}, B_1,\tau)$ at $0$ are $-n_0, n_0+1$. By (\ref{la})-(\ref{ya-1}) and the transformation law (\ref{518}), we immediately obtain that with respect to $y_{\mathbf{a}}(z)$ and $y_{-\mathbf{a}}(z)$, the monodromy matrices $\rho(\ell_j), j=1,2$, are exactly  (\ref{61.35}). This proves that the global monodromy data of H$(\mathbf{n}, B_1,\tau)$ is also the same $(r,s)$ as that of GLE$(\mathbf{n}^{+}, p(\tau), A(\tau), \tau)$.

\textbf{Case 2. }$p(\tau)=p_{k,\mathcal{C}}^{\mathbf{n}^{+}}(\tau)$ for some $k\in \{0,1,2,3\}$ and $\mathcal{C}\in\mathbb{C}\cup \{\infty\}$ is a not completely reducible solution, i.e. the global monodromy data of GLE$(\mathbf{n}^{+}, p(\tau), A(\tau), \tau)$ is $(2\varepsilon_{k,1},2\varepsilon_{k,2},\mathcal{C})$.

Recalling Theorem \ref{thm0-1} and (\ref{trace}),
there exists $\boldsymbol{a}(\tau)=(a_{1}(\tau),\cdot \cdot \cdot,a_{\hat{N}}(\tau))$ satisfying (\ref{suma}) and
\begin{equation}\label{rs-inter}
(r,s)\equiv\begin{cases} (0,0) \mod\mathbb{Z}^2\quad\text{if } k=0,\\(\frac{1}{2},0) \mod\mathbb{Z}^2\quad\text{if } k=1,\\(0,\frac{1}{2}) \mod\mathbb{Z}^2\quad\text{if } k=2,\\(\frac{1}{2},\frac{1}{2}) \mod\mathbb{Z}^2\quad\text{if } k=3,\end{cases}
\end{equation}
such that $y_{\boldsymbol{a}(\tau)}(z)$ given by (\ref{ya}) is a solution of GLE$(\mathbf{n}^{+}, p(\tau), A(\tau), \tau)$. As in Case 1, we may assume (\ref{alimi}), then $y_{\boldsymbol{a}}(z)$ given by (\ref{ya-1}) is a solution of H$(\mathbf{n}, B_1,\tau)$. By (\ref{la}), (\ref{rs-inter}) and (\ref{518}), we easily obtain
\[y_{\boldsymbol{a}}(z+\omega_j)=\varepsilon_{k,j}y_{\boldsymbol{a}}(z),\quad j=1,2.\]
Since the proof of Theorem \ref{thm0-1} gives $\mathcal{C}=\frac{\int_{z}^{z+\omega_2}y_{\boldsymbol{a}(\tau)}(\xi)^{-2}d\xi}
{\int_{z}^{z+\omega_1}y_{\boldsymbol{a}(\tau)}(\xi)^{-2}d\xi}$, it follows from $y_{\boldsymbol{a}(\tau)}(z)^{-2}\to y_{\boldsymbol{a}}(z)^{-2}$ that
\[\frac{\int_{z}^{z+\omega_2}y_{\boldsymbol{a}}(\xi)^{-2}d\xi}
{\int_{z}^{z+\omega_1}y_{\boldsymbol{a}}(\xi)^{-2}d\xi}=\mathcal{C}.\]
Therefore, the global monodromy data of H$(\mathbf{n},B_1,\tau_0)$ is $(2\varepsilon_{k,1},2\varepsilon_{k,2},\mathcal{C})$, again the same as that of GLE$(\mathbf{n}^{+}, p(\tau), A(\tau), \tau)$.

The proof is complete.
\end{proof}

\begin{proof}[Proof of Theorem \ref{thm1}-(3)] Fix any $\mathbf{n}$, $\tau_0$ and $k\in \{0,1,2,3\}$. Suppose that the global monodromy datas of H$(\mathbf{n}, B_1, \tau_0)$ and H$(\mathbf{n}_k, B_2, \tau_0)$ are the same for some $B_1, B_2\in\mathbb{C}$. By changing variable $z\to z+\frac{\omega_k}{2}$, we only need to consider the case $k=0$. Then (\ref{n-0-0}) implies \begin{equation}\label{fc-11}\mathbf{n}_0^{-}=(n_0+1,n_1,n_2,n_3)=\mathbf{n}^{+},\quad\text{i.e. }(\mathbf{n}^{+})^+=\mathbf{n}_0.\end{equation}
Define $h_1$ by (let $c_{0}^{2}=i\frac{2n_{0}+3}{2\pi}$ and $B_0=B_1$ in (\ref{B}))
\[B_{1}=-(2n_0+3)\left(  4\pi i h_1-\eta_{1}(\tau_{0})\right)
-\sum_{k=1}^{3}n_{k}(n_{k}+1)e_{k}(\tau_{0}),\]
and $h_2$ by (let $c_{0}^{2}=-i\frac{2n_{0}+3}{2\pi}$ and $B_0=B_2$ in (\ref{B}))
\[B_{2}=(2n_0+3)\left(  4\pi i h_2-\eta_{1}(\tau_{0})\right)
-\sum_{k=1}^{3}n_{k}(n_{k}+1)e_{k}(\tau_{0}).\]
Then it follows from (\ref{515-5}) that there exist solutions $p_{h_j}^{\mathbf{n}^+}$, $j=1,2$, satisfying
\begin{equation}
p_{h_1}^{\mathbf{n}^+}(\tau)=c_{1}(\tau-\tau_{0})^{\frac{1}{2}}%
(1+h_1(\tau-\tau_{0})+O(\tau-\tau_{0})^{2})\text{ as }\tau
\rightarrow \tau_{0}, \label{515-55}
\end{equation}
\begin{equation}
p_{h_2}^{\mathbf{n}^+}(\tau)=c_{2}(\tau-\tau_{0})^{\frac{1}{2}}%
(1+h_2(\tau-\tau_{0})+O(\tau-\tau_{0})^{2})\text{ as }\tau
\rightarrow \tau_{0}, \label{515-555}
\end{equation}
with $c_1^2=i\frac{2n_{0}+3}{2\pi}=-c_2^2$. In particular, \begin{equation}\label{p1p2}\wp(p_{h_1}^{\mathbf{n}^+}(\tau)|\tau)\neq \wp( p_{h_2}^{\mathbf{n}^+}(\tau)|\tau)\quad\text{for $\tau\to \tau_0$}.\end{equation}

On the other hand, it follows from (\ref{fc-11})-(\ref{515-555}) and Theorem \ref{thm-6A} that the associated
GLE$(\mathbf{n}^{+}, p_{h_1}^{\mathbf{n}^+}(\tau), A_{h_1}^{\mathbf{n}^+}(\tau), \tau)$
converges to H$(\mathbf{n},B_{1}, \tau_0)$ and GLE$(\mathbf{n}^+, p_{h_2}^{\mathbf{n}^+}(\tau), A_{h_2}^{\mathbf{n}^+}(\tau), \tau)$
converges to H$(\mathbf{n}_0,B_{2}, \tau_0)$ as $\tau\to \tau_0$.
Then the same proof as Theorem \ref{thm1}-(2) shows that GLE$(\mathbf{n}^+, p_{h_1}^{\mathbf{n}^+}(\tau), A_{h_1}^{\mathbf{n}^+}(\tau), \tau)$ has the same global monodromy data as H$(\mathbf{n},B_{1}, \tau_0)$ and so do for GLE$(\mathbf{n}^+$, $p_{h_2}^{\mathbf{n}^+}(\tau), A_{h_2}^{\mathbf{n}^+}(\tau), \tau)$, H$(\mathbf{n}_0,B_{2}, \tau_0)$. Together with our assumption, we conclude that GLE$(\mathbf{n}^+, p_{h_j}^{\mathbf{n}^+}(\tau), A_{h_j}^{\mathbf{n}^+}(\tau), \tau)$ has the same global monodromy data for $j=1,2$. Then it follows from Theorem \ref{thm1}-(1) that these two GLEs coincide, i.e. $\wp(p_{h_1}^{\mathbf{n}^+}(\tau)|\tau)\equiv\wp( p_{h_2}^{\mathbf{n}^+}(\tau)|\tau)$, a contradiction with (\ref{p1p2}).

The proof is complete.
\end{proof}

We want to emphasize that the same proof as (\ref{globalmono}) improves Theorem \ref{thm-6A} as follows.

\begin{theorem}
Under the same notations and assumptions as Theorem \ref{thm-6A}, GLE$(\mathbf{n},$ $p^{\mathbf{n}}(\tau), A(\tau), \tau)$ has the same global monodromy data with
its limiting equation H$(\mathbf{n}^{+},B_{0}, \tau_0)$ for $c_{0}%
^{2}=-i\frac{2n_{0}+1}{2\pi}$ (resp. H$(\mathbf{n}^{-},B_{0}, \tau_0)$ for
$c_{0}^{2}=i\frac{2n_{0}+1}{2\pi}$).
\end{theorem}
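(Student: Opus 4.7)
The plan is to adapt the argument used in the proof of Theorem \ref{thm1}-(2) to establish claim (\ref{globalmono}). That argument handled the specific case GLE$(\mathbf{n}^{+},\cdot)\to{}$H$(\mathbf{n},\cdot)$, which, after relabeling, is precisely the subcase $c_{0}^{2}=i\tfrac{2n_{0}+1}{2\pi}$ of the present theorem. The symmetric subcase $c_{0}^{2}=-i\tfrac{2n_{0}+1}{2\pi}$, namely GLE$(\mathbf{n},\cdot)\to{}$H$(\mathbf{n}^{+},\cdot)$, requires a parallel argument whose only essential difference is the bookkeeping of pole orders at $z=0$.

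I would first split into the completely reducible and not completely reducible subcases via Theorems \ref{thm-II-8}-\ref{thm-II-8-0}, and in each exhibit a common eigen-solution $y_{\boldsymbol{a}(\tau)}(z)$ of GLE$(\mathbf{n},p^{\mathbf{n}}(\tau),A(\tau),\tau)$ of the Hermite-Halphen form (\ref{exp1}), with $\boldsymbol{a}(\tau)$ satisfying (\ref{61-37}) or (\ref{61-37-5}). After passing to a subsequence so that $\boldsymbol{a}(\tau)\to\boldsymbol{a}\in E_{\tau_{0}}^{N}$, the asymptotics $\sigma(p)\sim p$, $\wp(p)\sim p^{-2}$ together with the identity $\sigma(z-p)\sigma(z+p)=\sigma(z)^{2}\sigma(p)^{2}(\wp(p)-\wp(z))$ yield $\sqrt{\sigma(z-p(\tau))\sigma(z+p(\tau))}\to\pm\sigma(z)$ along a branch chosen consistently by restricting the deformation path to avoid $p(\tau)\in\Lambda_{\tau}$. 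Hence $y_{\boldsymbol{a}(\tau)}(z)$ converges to an explicit meromorphic function $Y(z)$. For $c_{0}^{2}=-i\tfrac{2n_{0}+1}{2\pi}$, this $Y(z)$ is already in the Hermite-Halphen form (\ref{yby}) for H$(\mathbf{n}^{+},B_{0},\tau_{0})$, because the pole order $n_{0}+1$ at $z=0$ matches the local exponent $-(n_{0}+1)$ of H$(\mathbf{n}^{+})$ and $\tilde{N}^{+}=N$. For $c_{0}^{2}=i\tfrac{2n_{0}+1}{2\pi}$ (with $n_{0}\geq 1$), the limiting H$(\mathbf{n}^{-},B_{0},\tau_{0})$ admits pole of order at most $n_{0}-1$ at $z=0$, which forces precisely two of the $a_{i}(\tau)$'s to converge to $0$; after the corresponding factors cancel, $Y(z)$ assumes the Hermite-Halphen form (\ref{yby}) for H$(\mathbf{n}^{-})$ with $\tilde{N}^{-}=N-2$.

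In either subcase, applying the transformation law (\ref{518}) to the explicit form of $Y(z)$, together with (\ref{61-37}) (respectively (\ref{61-37-5})), reproduces precisely the monodromy matrices (\ref{61.3512}) in the completely reducible case and the signs (\ref{trace00}) in the not completely reducible case, with identical $(r,s)$ (respectively $(\varepsilon_{k,1},\varepsilon_{k,2})$). The third invariant $\mathcal{C}$ in the not completely reducible case passes to the limit via
\[
\mathcal{C}=\frac{\int_{z}^{z+\omega_{2}}y_{\boldsymbol{a}(\tau)}(\xi)^{-2}\,d\xi}{\int_{z}^{z+\omega_{1}}y_{\boldsymbol{a}(\tau)}(\xi)^{-2}\,d\xi},
\]
using locally uniform convergence $y_{\boldsymbol{a}(\tau)}^{-2}\to Y^{-2}$ off its zero set, exactly as in the final portion of the proof of Theorem \ref{thm1}-(2). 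The principal obstacle is the $c_{0}^{2}=i\tfrac{2n_{0}+1}{2\pi}$ subcase: one must verify that \emph{exactly} two of the $a_{i}(\tau)$'s converge to $0$. The lower bound of two is forced by the local exponent $-(n_{0}-1)$ of H$(\mathbf{n}^{-})$ at $z=0$; the matching upper bound uses the fact that $Y(z)$, being a non-trivial eigen-solution of H$(\mathbf{n}^{-})$, must have the uniquely determined Hermite-Halphen decomposition (\ref{yby}) with precisely $\tilde{N}^{-}=\sum_k n_{k}-1$ parameters.
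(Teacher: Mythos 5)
Your proposal is correct and is essentially the paper's own argument: the paper proves this theorem by invoking verbatim the proof of (\ref{globalmono}) inside the proof of Theorem \ref{thm1}-(2), which after relabeling $\mathbf{n}\mapsto\mathbf{n}^{+}$ is exactly your $c_{0}^{2}=i\frac{2n_{0}+1}{2\pi}$ subcase, while the $c_{0}^{2}=-i\frac{2n_{0}+1}{2\pi}$ subcase is the easier pole-order bookkeeping you describe. One minor remark: you do not need that \emph{exactly} two of the $a_{i}(\tau)$ tend to $0$ (and your uniqueness argument for the upper bound is not quite airtight, since the limit could in principle vanish to order $n_{0}$ at $z=0$); the paper only records that at least two do, because the monodromy multipliers are read off from the sum relation $\sum_{i}a_{i}-\sum_{k}\frac{n_{k}\omega_{k}}{2}=r+s\tau_{0}$ together with the transformation law (\ref{518}) applied to the uncancelled limiting expression, irrespective of the exact count.
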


\section{Applications}

\label{sec-app}

In this section, we give an application of Theorem \ref{thm1} to GLE$(\mathbf{n}, p, A, \tau)$. First we recall the basic theory of GLE$(\mathbf{n}, p, A, \tau)$ from its  hyperelliptic aspect in Part I \cite{CKL1}.

Recall $\Phi_e(z)$ in Theorem 2.A. It follows from (\ref{303-1}) that
\[Q_{\mathbf{n},p}(A):=\Phi_e'(z)^{ 2}-2\Phi_e^{\prime \prime }(z)\Phi_2(z)+4I_{\mathbf{n}}(z;
p,A,\tau)\Phi_e(z)^2\]
is a monic polynomial in $A$ of degree $2g+2$ and independent of $z$. Since $\Phi_e(z)=y_1(z)y_2(z)$ (recall $y_2(z)=y_1(-z)$), it is known (cf. Part I \cite[Theorem 2.7]{CKL1}) that the Wronskian $W$ of $y_1(z)$ and $y_2(z)$ satisfies $W^2=Q_{\mathbf{n},p}(A)$.
Define the hyperelliptic curve $\Gamma_{\mathbf{n,%
}p}=\Gamma_{\mathbf{n,}p}(\tau)$ by
\begin{equation}
\Gamma_{\mathbf{n,}p}(\tau):=\{(A,W)|W^2=Q_{\mathbf{n,}p}(A;\tau)\}.
\end{equation}
Since $\deg_{A}Q_{\mathbf{n,}p}(A;\tau)  $ is even, the curve
$\Gamma_{\mathbf{n,}p}(\tau)  $
has two points at infinity denoted by $\infty_{\pm}$, i.e. $\overline{\Gamma_{\mathbf{n,}%
p}(  \tau )  }=\Gamma_{\mathbf{n,}p}(  \tau )
\cup \{  \infty_{\pm} \}$.
Clearly $y_1(z)$ can be uniquely determined by the pair $(A, W)\in \Gamma_{\mathbf{n,}p}(\tau)$ by considering the correspondence (note that $-W$ is the Wronskian of $y_2(z)$ and $y_1(z)=y_2(-z)$)
\[(y_1(z), y_2(z))\leftrightarrow (A, W),\quad (y_2(z), y_1(z))\leftrightarrow (A, -W).\]
Denote $N=\sum_{k=0}^3n_k+1$ in the sequel. Recall Section 2.2 that there is  ${\boldsymbol a}=\{a_1,\cdots,a_N\}$ (unique mod $\Lambda_{\tau}$) such that $y_1(z)=y_{{\boldsymbol a}}(z)$. Then we can define a map $i_{\mathbf{n},p}:\Gamma_{\mathbf{n},p}\rightarrow \,\text{Sym}^NE_\tau$
by
\begin{equation}
i_{\mathbf{n},p}(A,W) :=\{[a_{1}],\cdot \cdot \cdot,[a_{N}]\}\in \text{Sym}^NE_\tau,
\label{i1}
\end{equation}
where $[a_i]:=a_i \ (\text{mod}\ \Lambda_{\tau}) \in E_{\tau}$. Clearly this $i_{{\bf n},p}$ is well-defined. Furthermore, if $W\neq0$, then we see from $y_2(z)=y_{-\mathbf{a}}(z)$ that
\begin{equation}
i_{\mathbf{n},p}( A,-W) =\{-[a_{1}],\cdot \cdot \cdot,-[a_{N}]\}.
\label{i2}
\end{equation}
We proved in Part I \cite{CKL1} that
$i_{{\bf n},p}$ is an embedding from $\Gamma_{\mathbf{n},p}$ into Sym$^NE_\tau$.
Let $Y_{{\bf n},p}(\tau)$ be the image of $\Gamma_{{\bf n},p}(\tau)$ in Sym$^{N}E_\tau$
under $i_{{\bf n},p}$, i.e.
\begin{equation}
Y_{{\bf n},p}(\tau)  =\left \{
\begin{array}
[c]{r}%
[\boldsymbol{a}]=\{[a_{1}],\cdot \cdot \cdot,[a_{N}]\}\text{ }\in $Sym$^NE_{\tau}|\text{
}y_{\boldsymbol{a}}(z)  \text{ defined in }\\
\text{(\ref{exp1}) is a solution of GLE$(\mathbf{n}, p, A, \tau)$ for some $A$}
\end{array}
\right \}, \label{set}%
\end{equation}
and define the addition map $\sigma_{{\bf n},p}:Y_{{\bf n},p}(\tau)\rightarrow
E_\tau$ by
\begin{equation}
\sigma_{{\bf n},p}([\boldsymbol{a}]):=\sum_{i=1}^N[a_i]-\sum_{k=1}^3 [\tfrac{n_k\omega_k}{2}].
\end{equation}
 Clearly
\[\sigma_{{\bf n},p}([-\boldsymbol{a}])=-\sum_{i=1}^N[a_i]-\sum_{k=1}^3 [\tfrac{n_k\omega_k}{2}]=-\sigma_{{\bf n},p}([\boldsymbol{a}]).\]
Furthermore, the degree $\deg \sigma_{{\bf n},p}= \#\sigma_{{\bf n}, p}^{-1}(z), z\in E_\tau$,
is well-defined and
\[
\deg \sigma_{\mathbf{n,}p}=\sum_{k=0}^{3}n_{k}(n_{k}+1)+1.
\]
Besides,
\begin{equation*}
\overline{Y_{\mathbf{n}%
,p}(\tau) }=Y_{\mathbf{n},p}( \tau ) \cup  \{
\infty_{+}(p),  \infty_{-}(p)\},
\end{equation*}
where
\begin{equation*}
\infty_{\pm}( p) :=\bigg( \overset{n_{0}}{\overbrace{0,\cdot \cdot
\cdot,0}},\overset{n_{1}}{\overbrace{\tfrac{\omega_{1}}{2},\cdot \cdot \cdot,%
\tfrac{\omega_{1}}{2}}},\overset{n_{2}}{\overbrace{\tfrac{\omega_{2}}{2},\cdot
\cdot \cdot,\tfrac{\omega_{2}}{2}}},\overset{n_{3}}{\overbrace {\tfrac{%
\omega_{3}}{2},\cdot \cdot \cdot,\tfrac{\omega_{3}}{2}}},\pm p\bigg).
\end{equation*}
The above theories can be found in Part I \cite{CKL1}.

Let $K(E_{\tau})$ and $K(  \overline{Y_{\mathbf{n,}
p}(\tau)})$ be the field of rational functions of
$E_{\tau}$ and $\overline{Y_{\mathbf{n,}
p}(\tau)}$,
respectively. Then $K(\overline{Y_{\mathbf{n,}
p}(\tau)})$ is a finite extension over $K(E_{\tau})$ and
\begin{equation}\label{1finite-ext}\left[K(\overline{Y_{\mathbf{n,}
p}(\tau)}): K(E_{\tau})\right]=\deg\sigma_{\mathbf{n},p}=\sum_{k=0}^{3}n_{k}(n_{k}+1)+1.\end{equation}
In this section, we consider the basic question \emph{what a primitive generator of this field extension is}.
Motivated by (\ref{61-37})-(\ref{61-38}), we define
\begin{align}\label{fc-f1}\mathbf{z}_{\mathbf{n},p}(a_1,\cdots,a_N):=&\zeta\Bigg(\sum_{i=1}^N a_i-\sum_{k=1}^{3}\frac{n_{k}\omega_{k}}{2}\Bigg)\\
&-\frac{1}{2}\sum_{i=1}^{N}(\zeta
(a_{i}+p)+\zeta(a_{i}-p))+\sum_{k=1}^{3}\frac{n_{k}\eta_{k}
}{2},\nonumber\end{align}
which is meromorphic and periodic in each $a_i$ and hence defines a rational function on $E_{\tau}^N$. By symmetry, it descends to a rational function on Sym$^N E_{\tau}$. We denote the restriction $\mathbf{z}_{\mathbf{n},p}|_{\overline{Y_{\mathbf{n,}
p}(\tau)}}$ also by $\mathbf{z}_{\mathbf{n},p}$, which is a rational function on $\overline{Y_{\mathbf{n,}
p}(\tau)}$. Here as an application of Theorem \ref{thm1}, we can prove that $\mathbf{z}_{\mathbf{n},p}(\boldsymbol{a})$ is a primitive generator. The same statement as the following result was proved in \cite{LW2} for the Lam\'{e} equation and later generalized to H$(\mathbf{n},B,\tau)$ in Part II \cite{CKL2}.

\begin{theorem}
$\mathbf{z}_{\mathbf{n},p}$ is a primitive generator of the finite extension of rational function field $K(\overline{Y_{\mathbf{n,}
p}(\tau)})$ over $K(E_{\tau})$, i.e. the minimal polynomial $W_{\mathbf{n},p}(\mathbf{z})\in K(E_{\tau})[\mathbf{z}]$ of $\mathbf{z}_{\mathbf{n},p}$ satisfies $\deg W_{\mathbf{n},p}=\deg \sigma_{\mathbf{n},p}$.
\end{theorem}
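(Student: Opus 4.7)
The inequality $\deg W_{\mathbf{n},p} \leq \deg\sigma_{\mathbf{n},p}$ is automatic from (\ref{1finite-ext}), so the task reduces to showing that on a generic fiber $\sigma_{\mathbf{n},p}^{-1}(z_0)$ the rational function $\mathbf{z}_{\mathbf{n},p}$ separates points. The plan is to read off the global monodromy data of the associated GLE directly from the pair $(\sigma_{\mathbf{n},p}([\boldsymbol{a}]),\,\mathbf{z}_{\mathbf{n},p}([\boldsymbol{a}]))$ and then invoke Theorem \ref{thm1}-(1). As a preparation, note that by Corollary \ref{coro4} the non-completely-reducible locus of $Y_{\mathbf{n},p}(\tau)$ corresponds to the finite set of $A$'s with $(\text{tr}\,N_1,\text{tr}\,N_2)\in\{\pm(2,\pm 2)\}$, hence is finite. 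So for a generic $[\boldsymbol{a}] \in Y_{\mathbf{n},p}(\tau)$ the GLE$(\mathbf{n},p,A,\tau)$ is completely reducible with monodromy data $(r,s)\in \mathbb{C}^2\setminus\frac{1}{2}\mathbb{Z}^2$, and formulas (\ref{61-37})--(\ref{61-38}) together with (\ref{fc-f1}) give, for a suitable lift of $[\boldsymbol{a}]$ to $\mathbb{C}^N$,
\begin{equation*}
\sigma_{\mathbf{n},p}([\boldsymbol{a}]) = [r+s\tau]\in E_\tau, \qquad \mathbf{z}_{\mathbf{n},p}([\boldsymbol{a}]) = \zeta(r+s\tau)-r\eta_1(\tau)-s\eta_2(\tau).
\end{equation*}
Thus the pair $(\sigma_{\mathbf{n},p},\,\mathbf{z}_{\mathbf{n},p})$ computes precisely the Hecke-type quantity $Z_{r,s}(\tau)$ on top of the location $r+s\tau$.

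Next I would establish the separation: if two generic $[\boldsymbol{a}_1],[\boldsymbol{a}_2] \in Y_{\mathbf{n},p}(\tau)$ share the same $(\sigma_{\mathbf{n},p},\mathbf{z}_{\mathbf{n},p})$-values, then $[\boldsymbol{a}_1]=[\boldsymbol{a}_2]$. From $[r_1+s_1\tau]=[r_2+s_2\tau]$ in $E_\tau$ I obtain $r_1+s_1\tau-r_2-s_2\tau=m+n\tau$ for some $m,n\in\mathbb{Z}$; applying the quasi-periodicity $\zeta(z+m+n\tau)=\zeta(z)+m\eta_1+n\eta_2$ to the $\mathbf{z}$-equality and then using Legendre's relation $\tau\eta_1-\eta_2=2\pi i$ forces $r_1-r_2=m$ and $s_1-s_2=n$ in $\mathbb{C}$, so $(r_1,s_1)\equiv(r_2,s_2) \bmod \mathbb{Z}^2$. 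Consequently the two associated GLEs have the same global monodromy data, and Theorem \ref{thm1}-(1) yields $\text{GLE}(\mathbf{n},p,A_1,\tau)=\text{GLE}(\mathbf{n},p,A_2,\tau)$. The uniqueness (\ref{aunique}) then gives $[\boldsymbol{a}_1]=[\boldsymbol{a}_2]$ or $[\boldsymbol{a}_1]=[-\boldsymbol{a}_2]$. Since $\sum_{k=1}^{3}n_k\omega_k\in\Lambda_\tau$, the involution $[\boldsymbol{a}]\mapsto[-\boldsymbol{a}]$ satisfies $\sigma_{\mathbf{n},p}([-\boldsymbol{a}])=-\sigma_{\mathbf{n},p}([\boldsymbol{a}])$ in $E_\tau$, so the second alternative would force $\sigma_{\mathbf{n},p}([\boldsymbol{a}_1])\in E_\tau[2]$, which is a proper algebraic (non-generic) condition on $z_0$. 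This rules it out and completes the separation.

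Combining these with the fact that $\sigma_{\mathbf{n},p}$ has exactly $\deg\sigma_{\mathbf{n},p}$ preimages over a generic $z_0\in E_\tau$, I conclude that $\mathbf{z}_{\mathbf{n},p}$ attains $\deg\sigma_{\mathbf{n},p}$ distinct values on a generic fiber, whence $\deg W_{\mathbf{n},p}=\deg\sigma_{\mathbf{n},p}$, i.e.\ $\mathbf{z}_{\mathbf{n},p}$ is a primitive generator. The only serious input is Theorem \ref{thm1}-(1); once that is in hand, the argument is a clean reduction, and the main delicate step is the combined use of the quasi-periodicity of $\zeta$ and Legendre's relation to recover $(r,s) \bmod \mathbb{Z}^2$ from the Hecke pair $(r+s\tau,\,\zeta(r+s\tau)-r\eta_1-s\eta_2)$, together with the sign-involution bookkeeping that rules out $[\boldsymbol{a}_1]=[-\boldsymbol{a}_2]$ on a generic fiber.
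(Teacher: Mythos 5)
Your proposal is correct and follows essentially the same route as the paper: work on a generic fiber of $\sigma_{\mathbf{n},p}$ avoiding $E_\tau[2]$ (so every point is completely reducible), observe that $(\sigma_{\mathbf{n},p},\mathbf{z}_{\mathbf{n},p})$ records $(r+s\tau, Z_{r,s}(\tau))$, recover the global monodromy data, and invoke Theorem \ref{thm1}-(1) to force coincidence of the GLEs and hence of the fiber points. The only cosmetic differences are that the paper chooses lifts with $\sum a_i=\sum b_i$ exactly (so it never needs the quasi-periodicity/Legendre step to recover $(r,s)\bmod\mathbb{Z}^2$) and it excludes the alternative $[\boldsymbol{a}_1]=[-\boldsymbol{a}_2]$ by the one-dimensionality of the common eigenspace for $(e^{-2\pi is},e^{2\pi ir})$ rather than by your (equally valid) observation that it would put $\sigma_0$ in $E_\tau[2]$.
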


\begin{proof}
Since $\mathbf{z}_{\mathbf{n},p}\in K(  \overline{Y_{\mathbf{n,}
p}(\tau)})$, its minimal polynomial $W_{\mathbf{n},p}(\mathbf{z})\in K(E_{\tau})[\mathbf{z}]=\mathbb{C}(\wp(\sigma),\wp'(\sigma))[\mathbf{z}]$ exists with degree
$d_{\mathbf{n},p}:=\deg W_{\mathbf{n},p}|\deg \sigma_{\mathbf{n},p}$ by (\ref{1finite-ext}).

Note that if $\boldsymbol{a}=-\boldsymbol{a}$, then $\sigma_{\mathbf{n},p}(\boldsymbol{a})\in E_{\tau}[2]$.
To prove $d_{\mathbf{n},p}=\deg \sigma_{\mathbf{n},p}$, i.e. $\mathbf{z}_{\mathbf{n},p}(\boldsymbol{a})$ is a primitive generator, we take $\sigma_0\in E_{\tau}\setminus E_{\tau}[2]$ outside the branch loci of $\sigma_{\mathbf{n},p}: \overline{Y_{\mathbf{n,}
p}(\tau)}\to E_{\tau}$ such that there are precisely $\deg \sigma_{\mathbf{n},p}$ different points $\boldsymbol{a}^{k}\in Y_{\mathbf{n,}
p}(\tau)$ satisfying $\sigma_{\mathbf{n},p}(\boldsymbol{a}^k)=\sigma_0$ and $\pm [p]\notin \boldsymbol{a}^{k}$  for $1\leq k\leq \deg \sigma_{\mathbf{n},p}$. We claim that
\begin{equation}\label{fc-fc}
 \mathbf{z}_{\mathbf{n},p}(\boldsymbol{a}^{k_1})\neq  \mathbf{z}_{\mathbf{n},p}(\boldsymbol{a}^{k_2}),\quad \forall k_1\neq k_2.
\end{equation}
Suppose for some $k_1\neq k_2$ we have $\mathbf{z}_{\mathbf{n},p}(\boldsymbol{a}^{k_1})=  \mathbf{z}_{\mathbf{n},p}(\boldsymbol{a}^{k_2})$.
Then we can take $(a_1,\cdots,a_N)$, $(b_1,\cdots,b_N)\in\mathbb{C}^N$ to be representatives of $\boldsymbol{a}^{k_1}, \boldsymbol{a}^{k_2}$ such that
\[\sum_{i=1}^N a_i=\sum_{i=1}^N b_i,\;\sum_{i=1}^{N}(\zeta
(a_{i}+p)+\zeta(a_{i}-p))=\sum_{i=1}^{N}(\zeta
(b_{i}+p)+\zeta(b_{i}-p)).\]
By (\ref{set}), there exist $A_1, A_2$ such that $y_{\boldsymbol{a}^{k_1}}(z)$ (resp. $y_{\boldsymbol{a}^{k_2}}(z)$) is a solution of GLE$(\mathbf{n}, p, A_1, \tau)$ (resp. GLE$(\mathbf{n}, p, A_2, \tau)$). Then (\ref{61.35})-(\ref{61-38}) imply that GLE$(\mathbf{n}, p, A_1, \tau)$ and GLE$(\mathbf{n}, p, A_1, \tau)$ have the same global monodromy data $(r,s)\notin\frac{1}{2}\mathbb{Z}^2$, namely $y_{\boldsymbol{a}^{k_1}}(z)$ and $y_{\boldsymbol{a}^{k_2}}(z)$ satisfy the same transformation law:
\begin{equation}\label{1same-trans}\ell_1^*y(z)=e^{-2\pi i s}y(z),\quad \ell_2^*y(z)=e^{2\pi i r}y(z).\end{equation}
Consequently, Theorem \ref{thm1} implies GLE$(\mathbf{n}, p, A_1, \tau)=$GLE$(\mathbf{n}, p, A_2, \tau)$, i.e. $y_{\boldsymbol{a}^{k_1}}(z)$ and $y_{\boldsymbol{a}^{k_2}}(z)$ are solutions of the same GLE$(\mathbf{n}, p, A_1, \tau)$ and satisfies the same transformation law (\ref{1same-trans}).
It follows from $(r,s)\notin\frac{1}{2}\mathbb{Z}^2$ and (\ref{61.35}) that $y_{\boldsymbol{a}^{k_1}}(z)=y_{\boldsymbol{a}^{k_2}}(z)$, so $\boldsymbol{a}^{k_1}=\boldsymbol{a}^{k_2}$, a contradiction.

This proves (\ref{fc-fc}), which infers that these $\deg \sigma_{\mathbf{n},p}$ different points $\boldsymbol{a}^k$'s give $\deg \sigma_{\mathbf{n},p}$ different values $\mathbf{z}_{\mathbf{n},p}(\boldsymbol{a}^k)$'s. That is for $\sigma=\sigma_0$, the polynomial $W_{\mathbf{n},p}(\mathbf{z})\in \mathbb{C}(\wp(\sigma),\wp'(\sigma))[\mathbf{z}]$ of degree $d_{\mathbf{n},p}|\deg \sigma_{\mathbf{n},p}$ has $\deg \sigma_{\mathbf{n},p}$ distinct zeros $\mathbf{z}_{\mathbf{n},p}(\boldsymbol{a}^k)$'s, which implies $d_{\mathbf{n},p}=\deg \sigma_{\mathbf{n},p}$. The proof is complete.
\end{proof}

\begin{remark}
For $(r,s)\in\mathbb{C}^2\setminus \frac{1}{2}\mathbb{Z}^2$, as in \cite{CKL2,LW2} we define
\[Z_{r,s}(\tau):=\zeta(r+s\tau|\tau)-r\eta_1(\tau)-s\eta_2(\tau).\]
Then it follows from (\ref{fc-f1}) and (\ref{61.35})-(\ref{61-38}) that $\mathbf{z}_{\mathbf{n},p}(\boldsymbol{a})=Z_{r,s}(\tau)$ with $\sigma_{\mathbf{n},p}(\boldsymbol{a})=r+s\tau$. Therefore, like the Lam\'{e} case proved in \cite{LW2} and the general Darboux-Treibich-Verdier case proved in Part II \cite{CKL2}, the monodromy data $(r,s)$ of GLE$(\mathbf{n},p,A,\tau)$ in (\ref{61.35})-(\ref{61-38}) can be characterized by
\begin{equation}\label{fc-f2}W_{\mathbf{n},p}(Z_{r,s}(\tau))=0
\quad\text{with}\quad\sigma=r+s\tau.
\end{equation}
Let us consider the special case $\mathbf{n}=\mathbf{0}$ for example. Then
\[\mathbf{z}_{\mathbf{0},p}(a)=\zeta(a)-\tfrac{1}{2}(\zeta
(a+p)+\zeta(a-p))=\frac{\wp'(a)}{2(\wp(p)-\wp(a))}\in K(E_{\tau}),\]
i.e. its minimal polynomial $W_{\mathbf{0},p}(\mathbf{z})=\mathbf{z}-\mathbf{z}_{\mathbf{0},p}(a)$. So (\ref{fc-f2}) is just
\[Z_{r,s}(\tau)-\frac{\wp'(r+s\tau)}{2(\wp(p)-\wp(r+s\tau))}=0,\]
which recovers Hitchin's formula
\[\wp(p|\tau)=\wp(r+s\tau|\tau)+\frac
{\wp^{\prime}(r+s\tau|\tau)}{2Z_{r,s}(\tau)}.\]
Therefore, (\ref{fc-f2}) should be closely related to the formula of solutions of Painlev\'{e} VI equation with parameter (\ref{parameter0})-(\ref{parameter}) for general $\mathbf{n}$, which will be studied elsewhere.
\end{remark}

\medskip

{\bf Acknowledgements} We thank Prof. Treibich for his interest and valuable comments to this work.
The research of Z. Chen was supported by NSFC (No. 11871123) and Tsinghua University Initiative Scientific Research Program (No. 2019Z07L02016).
The research of T-J. Kuo was supported by MOST (MOST 107-2628-M-003-002-MY4).

\end{document}